\documentclass[11pt,reqno]{amsart}
\usepackage{CJK,CJKnumb,CJKulem,times,dsfont,ifthen,mathrsfs,latexsym,amsfonts,color}
\usepackage{amsmath,amsthm,fontenc,amssymb,bm,graphicx,psfrag,listings,curves,extarrows,enumitem,slashed}

\usepackage[backref=page]{hyperref}
\usepackage{cite}
\usepackage{geometry}
\usepackage{indentfirst}
\usepackage{amssymb}
\makeatletter

\newcommand{\Rmnum}[1]{\expandafter\@slowromancap\romannumeral #1@}
\makeatother

\usepackage[showonlyrefs]{mathtools}  
\mathtoolsset{showonlyrefs=true}

\newtheorem{definition}{Definition}[section]
\newtheorem{theorem}{Theorem}[section]
\newtheorem{lemma}{Lemma}[section]

\newtheorem{proposition}{Proposition}[section]
\newtheorem{remark}{Remark}[section]

\newcommand{\al}{\alpha}

\newcommand{\si}{\sigma}

\newcommand{\N}{{\mathbb N}}

\newcommand{\Z}{{\mathbb Z}}
\newcommand{\C}{{\mathbb C}}

\newcommand{\R}{{\mathbb R}}
\newcommand{\bean}{\begin{eqnarray*}}
	\newcommand{\eean}{\end{eqnarray*}}

\newcommand{\sbr}[1]{\left(#1\right)}
\newcommand{\mbr}[1]{\left[#1\right]}
\newcommand{\lbr}[1]{\left\{#1\right\}}
\newcommand{\hbr}[1]{\left\langle  #1 \right\rangle}

\newcommand{\dist}[1]{{\rm dist} \sbr{#1} }

\newcommand{\dx}{  ~\mathrm{d} x}

\newcommand{\dz}{  ~\mathrm{d} z}

\newcommand{\nm}[1]{\Vert #1 \Vert}

\setitemize{itemindent=38pt,leftmargin=0pt,itemsep=-0.4ex,listparindent=26pt,partopsep=0pt,parsep=0.5ex,topsep=-0.25ex}

\numberwithin{equation}{section}

\begin{document}
\theoremstyle{plain}

\title[dynamics of NLS with partial harmonic oscillator]{Dynamics of focusing nonlinear Schr\"odinger equation with partial harmonic confinement in higher dimensions}
	\date{}

\author[T. Liu]{Tianhao Liu}
\address{Tianhao Liu, Institute of Applied Physics and Computational Mathematics and National Key Laboratory of Computational Physics, Beijing 100088, China}
\email{liuthmath@gmail.com}

\author[Z. Ma]{Zuyu Ma}
\address{Zuyu Ma, Graduate School of China Academy of Engineering Physics, Beijing 100088, China}
\email{mazuyu23@gscaep.ac.cn}

\author[Y. Song]{Yilin Song}
\address{Yilin Song,  The Graduate School of China Academy of Engineering Physics, Beijing, 100088,\  China}
\email{songyilin21@gscaep.ac.cn}

\author[J. Zheng]{Jiqiang Zheng}
\address{Jiqiang Zheng, Institute of Applied Physics and Computational Mathematics and National Key Laboratory of Computational Physics, Beijing 100088, China}
\email{zheng\_jiqiang@iapcm.ac.cn, zhengjiqiang@gmail.com}

	\maketitle
	
	\begin{abstract}
    We study the following focusing intercritical nonlinear Schr\"odinger equation  with   partial harmonic confinement:
   \begin{equation*}
		\begin{cases}
			 i\partial_t u+\Delta_{z}u-y^2 u =- |u|^{\alpha}u,\quad t\in \R,    \\ 
             u(0,z)=  u_0(z), \  z=(x,y)\in  \R^d\times \R,
		\end{cases} 
	\end{equation*}where    $d \geq 1 $  is an integer and    the exponent $\alpha$  satisfies
      \begin{equation}\label{assumption}
      	4/d< \alpha<    {4}/\sbr{d-1}, \,\,\, \text{if} ~~ d\geq 2;    \quad \quad  4/d< \alpha< + \infty,\,\,\, \text{if}  ~~ d=1. 
  \end{equation}  
    For this model, A. Ardia and R. Carles [Comm. Math. Sci. 19 (2021), 993-1032] established a sharp scattering result below the ground state threshold in dimensions $d \leq 4$ via the concentration-compactness and rigidity argument. However, their approach breaks down in higher dimensions due to the lack of smoothness in the nonlinearity. In this paper, we introduce a new strategy that removes this dimensional restriction and extend their results to  higher dimensions by circumventing the concentration-compactness principle. 	The main ingredients of our work are the interaction Morawetz-Dodson-Murphy estimates and an alternative variational characterization of the ground state threshold.
	\end{abstract}
	
	\textbf{Keywords:} Schr\"odinger equation; Partial harmonic oscillator; Scattering; Blow-up; Variational methods.

	\section{Introduction}\label{Sect1}
 \subsection{Background and motivation}
Consider the Cauchy problem for    the   nonlinear  Schr\"odinger equation (NLS)   with   a partial harmonic confinement   
\begin{equation}\label{PHNLS}
		\begin{cases} \tag{PHNLS}
			 i\partial_t u+\Delta_{z}u-y^2 u =-|u|^{\alpha}u,\quad t\in \R,    \\ 
             u(0,z)=  u_0(z),  
		\end{cases} 
	\end{equation}
    where $ z=(x,y)\in  \R^d\times \R$ with  $d \in \N\setminus\lbr{0}$,  the complex-valued function $u=u(t,z)\in \C$ is the unknown wave function. The exponent $\alpha$ is chosen in the intercritical range
      \begin{equation}\label{assumption}
      	4/d< \alpha<    {4}/\sbr{d-1}, \,\,\, \text{if} ~~ d\geq 2;    \quad \quad  4/d< \alpha< + \infty,\,\,\, \text{if}  ~~ d=1. 
  \end{equation}  
Here, the left endpoint of \eqref{assumption} corresponds to the mass-critical exponent in $d$    dimensions (the free $x$-direction), while the right  endpoint corresponds to the energy-critical exponent in $d+1$   dimensions (the full space). 
\vskip 0.08in
 The main purpose of this paper is to study the long-time behaviors of the solutions to \eqref{PHNLS}.
In particular, we     establish   a  complete dichotomy classification of scattering/blow-up    in   dimensions $d\geq 3$. The precise statement of our main result is given in Section \ref{Section:mainresult}. We now provide the background and motivation for studying the dynamics of \eqref{PHNLS}.

  \vskip0.08in
   Equation \eqref{PHNLS} is a special case of the   general  NLS with harmonic potential  
\begin{equation}\label{HNLS}
    \begin{cases} 
    i\partial_tu+\Delta_z u-(\kappa_1 |x|^2+\kappa_2  |y|^2) u=\mu|u|^{\alpha}u,  \quad t\in\R,\\
  u(0,z)=  u_0(z), 
\end{cases} 
\end{equation} 
where  $\kappa_1,\kappa_2\in \lbr{0,1}$,  $z=(x,y)\in  \R^d\times \R^N$ with $d,N\geq 1$, and we denote the total spatial dimension by $m = d + N$.  The parameter $\mu\in \lbr{+1,-1}$  with $\mu = +1$ corresponding to the defocusing case and $\mu = -1$ to the focusing case.  
 Equation \eqref{HNLS} is a fundamental model in quantum mechanics, which serves as a model for Bose–Einstein condensates confined in laboratory traps, as well as for describing the envelope dynamics of  a general dispersive wave in a weakly nonlinear medium.  The harmonic potential in  \eqref{HNLS} corresponds to the standard modelling for magnetic traps in the context of Bose–Einstein condensation; see, e.g., \cite{Josserand-Pomeau2001, Pitaevskii-Stringari2003}. Further physical motivation and background can be found in \cite{Bellazzini=CMP2017,Antonelli,Cheng-APDE}.

\vskip 0.08in

For the   autonomous case $\kappa_1 =\kappa_2=0$,  equation \eqref{HNLS} reduces to the standard NLS 
\begin{equation}\label{Free-NLS}
    \begin{cases}
    i\partial_tu+\Delta_z u=\mu|u|^{\alpha}u,&(t,z)\in\R\times\R^{m},\\
  u(0,z)=  u_0(z).
\end{cases} 
\end{equation} 
For the defocusing case, the global well-posedness and scattering in $H^1(\mathbb{R}^m)$ for \eqref{Free-NLS} are classical results, we refer to the Cazenave's textbook \cite{Caze}. For the focusing case, the global well-posedness was considered by Weinstein \cite{Weinstein=CMP}.
     The scattering was later proved by Holmer-Roudenko \cite{Holmer-CMP} for radial initial data and Duyckaerts-Holmer-Roudenko \cite{Duykaerts-MRL} for non-radial data with $(\alpha,m)=(2,3)$, and ultimately extended to any dimension and  any inter-critical power by Akahori-Nawa \cite{akahori}, Fang-Xie-Cazenave \cite{fang} and Guevara \cite{guevara}.  Their method is the application of concentration-compactness argument initially developed in Kenig-Merle \cite{KM-Invent} which was the first used to treat the energy-critical case. Later, Dodson-Murphy \cite{DM-Proc,DM-MRL} developed a new method to study the scattering for this model without relying on the concentration-compactness argument.

\vskip0.08in
 For the non-autonomous case where at least one $\kappa_j$ is nonzero,  the presence of   harmonic potential influences strongly the dynamics of the solution.  When all $\kappa_1=\kappa_2 = 1$, the harmonic potential reduces to the (isotropic) quadratic form $|z|^2$, leading to the following model 
\begin{equation} \label{QNLS}
    \begin{cases} 
    i\partial_tu+\Delta_z u-|z|^2 u=\mu|u|^{\alpha}u,&(t,z)\in\R\times\R^{m},\\
  u(0,z)=  u_0(z).
\end{cases} 
\end{equation}
Global well-posedness for \eqref{QNLS} has been established in a series of works: see \cite{Carles-MMMAS,Carles-AHP,Zhang} for the energy-subcritical case, and \cite{Killip-Visan-Zhang,Jao-CPDE,Jao-DCDS} for the energy-critical case.
It is well-known that  the solutions of \eqref{QNLS} do not scatter  because  the operator $-\Delta_z+|z|^2$  has a purely discrete spectrum.  We would also like to mention that    the long-time behavior of \eqref{QNLS} can be compared to the case of the pure torus $\mathbb{T}^m$. Indeed,   for the cubic NLS with $\alpha=2$ on $\mathbb{T}^2$, a non-scattering solution was constructed in \cite{CKSTT2010}. This behavior was later quantified by Guardia and Kaloshin \cite{Guardia}. In a similar spirit, Chabert \cite{Chabert} constructed a solution for the cubic NLS with a harmonic oscillator potential whose $H^1$ norm grows.

\vskip 0.04in
Nevertheless, if the confinement is turned off in some  but not all  directions, the condensate may still evolve asymptotically freely. This motivates the study of NLS \eqref{HNLS} with (anisotropic) partial harmonic potential. 
%
   Without loss of generality, we consider the case $\kappa_1=0$ and $ \kappa_2=1$,  which leads to the following model\begin{equation}\label{anisoNLS}
    \begin{cases} 
    i\partial_tu+\Delta_z u-|y|^2 u=\mu|u|^{\alpha}u,\quad t\in \R, \\
  u(0,z)=  u_0(z),
\end{cases} 
\end{equation}
where $ z=(x,y)\in  \R^d\times \R^N$. 
 Observe that  since  the potential is added in $y$-direction,   a global-in-time dispersive estimate is not expected for this direction. This can be seen from Mehler's formula 
\begin{align*}
	e^{it(\Delta_y-|y|^2)}f(x,y)=\frac{1}{(2\pi i\sin(2t))^\frac{N}{2}}\int_{\R^d}e^{\frac{i}{\sin(2t)} \big(\frac{y^2+\zeta^2}{2}\cos(2t)- y\cdot \zeta\big)}f(x,\zeta)d\zeta.
\end{align*}
Consequently, one obtains  following periodic-in-time dispersive estimate 
\begin{equation}
    	\big\|e^{it( \Delta_{y}-|y|^2)}f(x,y)\big\|_{L_{y}^\infty(\R^N)}\lesssim|\sin(2t)|^{-\frac{N}{2}}\|f(x,y)\|_{L_{y}^1(\R^N)},\quad\forall t\notin\frac{\pi}{2}\Bbb{Z}.
\end{equation}
However, by the strong dispersion of the $x$-direction, we have the global-in-time dispersive estimate 
\begin{align*}
	\big\|e^{it( \Delta_{x}+\Delta_{y}-|y|^2)}f(x,y)\big\|_{L_{x}^\infty L_{y}^2(\R^{d }\times\R^N)}\lesssim|t|^{-\frac{d}{2}}\|f(x,y)\|_{L_{x}^1L_{y}^2(\R^{d}\times\R^N)}.
\end{align*}
Therefore, one can expect the  scattering for equation \eqref{anisoNLS} in the intercritical case
\begin{equation} \label{s1}
    \frac{4}{d}< \alpha< \frac{4}{d+N-2},
\end{equation}
    which holds if and only if $N = 1$. It is therefore meaningful to investigate the long-time dynamics of the  solutions to  equation \eqref{anisoNLS} with one-dimensional partial harmonic confinement 
 \begin{equation} \label{equ1}
		\begin{cases}  
			 i\partial_t u+\Delta_{z}u-y^2 u =\mu|u|^{\alpha}u,\quad t\in \R,    \\ 
             u(0,z)=  u_0(z),  
		\end{cases} 
	\end{equation}
 under the assumption  \eqref{assumption}, where  $z=(x,y)\in  \R^d\times \R$.

   \vskip 0.12in
 Over the past decade, the long-time behavior of \eqref{equ1} has been studied by several authors.
 The pioneering work by Antonelli, Carles, and Silva \cite{Antonelli} first addressed this problem in the defocusing case $\mu=1$.   By establishing a new interaction Morawetz estimate associated with the partial harmonic oscillator, they proved scattering phenomenon under the assumption \eqref{assumption} for dimensions $1 \leq d \leq 3$  within the fully weighted Sobolev space
  $$  \lbr{ u \in H^1(\R^{d+1}) : |z|u \in L^2(\R^{d+1})}.$$ 
 Further results on the defocusing case can be found in    \cite{Cheng-APDE,Deng-Su-Zheng,Carles-Gallo-JMP}.

\vskip0.08in

  In this paper, we are particularly interested in the long-time behavior of solutions to \eqref{equ1} in the focusing case, that is, for equation \eqref{PHNLS}. Compared to the defocusing case, the focusing case is more delicate and requires a more subtle analysis.   In the study of \eqref{PHNLS}, the following mass and energy quantities are conserved 
  	\begin{equation}
 	    \begin{aligned}
 	        &M(u(t))=\int_{\R^{d+1}} |u(t,z)|^2 \dz,	\\
    &E(u(t))=\frac{1}{2} \int_{\R^{d+1}} |\nabla_z u(t,z)|^2 \dz+\frac{1}{2} \int_{\R^{d+1}} y^2|u(t,z)|^2 \dz -\frac{1}{\alpha+2}  \int_{\R^{d+1}} |u(t,z)|^{\alpha+2} \dz.
 	    \end{aligned}
 	\end{equation}    
 It is natural to take the initial data $u_0$ from the following weighted Sobolev space
\begin{equation}
    	\Sigma:= \lbr{ u \in H^1(\R^{d+1}): \nm{ yu }_{L^2(\R^{d+1})}^2:= \int_{\R^{d+1}} y^2 |u|^2 \dz<  +\infty},
\end{equation}
equipped with the norm $\nm{ u }_{	\Sigma }^2= \nm{\nabla_z u }_{L^2(\R^{d+1})}^2+\nm{u }_{L^2(\R^{d+1})}^2+\nm{yu }_{L^2(\R^{d+1})}^2.$  As established in \cite{Ardila-Carles2021},  the solution to \eqref{PHNLS} is locally well-posed in the energy space $\Sigma$. To investigate the global dynamics,  we first introduce some essential definitions.
\vskip0.08in
{\it \noindent $\bullet$ ~~Semivirial functional. }
\vskip0.08in
In the study of long-time dynamics for focusing NLS, Glassey's virial identity plays a crucial role. 
Recall the  standard focusing NLS on $\R^d$
\begin{equation} \label{sm2}
    i\partial_tu+\Delta_x u=-|u|^{\alpha}u, \quad  (t,x)\in\R\times\R^{d}, 
\end{equation} 
which coincides with    \eqref{PHNLS} when the confinement in the  $y$-direction   is removed (so that  $\Delta_z - y^2$ reduces to  $\Delta_x$). 
    Define the virial action functional   $ V (t)$ by 
\begin{equation}
    V (t)=\int_{\R^d} |x|^2|u(t,x)|^2 \dx.
\end{equation}
Glassey \cite{Glassey} established the following  virial identity:
 \begin{equation}  \label{defi of widetilde Q}
    V ''(t)= 8 \widetilde{Q}(u(t)):=8\sbr{\nm{\nabla_x u(t)}_{L^2(\R^{d})}^2 -\frac{\alpha d }{2\sbr{ \alpha+2} }\nm{ u(t)}_{L^{\alpha+2}(\R^{d})}^{\alpha+2}}.
\end{equation}
The quantity $\widetilde{Q}(u(t))$ is called   the virial functional.   Consequently, if the  virial functional is bounded above by some negative value, then the solution will   blow-up  in finite time. Conversely, a positive value of the virial functional may indicate global well-posedness or even scattering, see for instance \cite{KM-Invent,Weinstein=CMP}. 
Inspired by these ideas, when considering the  long-time dynamics of \eqref{PHNLS}, it is natural to  introduce a similar quantity $$\int_{\R^{d+1}} |z|^2|u(t,z)|^2 \dz.$$
 However, due to the  presence of partial harmonic confinement $y^2$, this functional is of limited use for studying the long-time dynamics of \eqref{PHNLS}.      Recall that   the confinement in the $y$-direction is not expected to provide large time dispersion, while the $x$-direction should yield large time dispersion.   It is therefore natural  to consider the dispersive
effects that are  only provided by the $x$-direction, posed on $\R^d$. This leads us to  consider the     {\it semivirial} action functional   $ V_{semi}(t)$ defined by
\begin{equation}
    V_{semi}(t)=\int_{\R^{d+1}} |x|^2|u(t,z)|^2 \dz.
\end{equation}
 A direct calculation implies that 
 \begin{equation}\label{defi of Q}
      V_{semi}''(t)= 8 Q(u(t))=8\sbr{\nm{\nabla_x u(t)}_{L^2(\R^{d+1})}^2 -\frac{\alpha d }{2\sbr{ \alpha+2} }\nm{ u(t)}_{L^{\alpha+2}(\R^{d+1})}^{\alpha+2}}.
 \end{equation}
We call $Q(u(t))$  the {\it semivirial} functional.  As we will see in Theorem \ref{thm:alldimension}, the  condition $Q(\phi)\geq 0$ is sharp for   global existence. 
\vskip0.08in
{\it \noindent $\bullet$ ~~Ground state. }
\vskip0.08in
 To   establish the sharp threshold for the long-time dynamics,  it is essential to introduce the notion of  a ground state. Consider the following elliptic equation 
 \begin{equation}\label{equ elliptic}
		-\Delta_{z} \phi+ y^2 \phi +\omega \phi=  |\phi|^{\alpha}\phi,  \quad z=\sbr{x,y} \in \R^{d} \times \R, 
	\end{equation}
   where  $\omega>0$.   A nontrivial solution $\phi_{\mathrm{GS}} \in \Sigma$ to equation \eqref{equ elliptic} is called a ground state  
  if it  minimizes  the action functional
 $$S_{\omega}(\phi)=E(\phi)+\frac{\omega}{2}M(\phi)$$
 among all nontrivial solutions of the elliptic problem \eqref{equ elliptic}. That is,  
\begin{equation}\label{ml1}
 S_{\omega}(\phi_{\mathrm{GS}})= m_{GS}:=\inf \lbr{ S_{\omega}(\phi) :  \phi \in \Sigma  \setminus \lbr{0} ~~ \text{solves}~~ \eqref{equ elliptic}}.\end{equation} 
 The existence and properties of  ground states $\phi_{\mathrm{GS}} $ can be established via variational methods, exploiting the compactness induced by the harmonic potential in the $y$-direction, see  \cite{Ardila-Carles2021}.
Ground states play a key role in characterizing the threshold for the long-time dynamics.   In this direction, the work of Ardila and Carles \cite{Ardila-Carles2021} established the global dynamics below the ground state energy $m_{GS}$ for dimensions $1 \leq d \leq 4$ under assumption \eqref{assumption}.    However, their framework does not extend to dimensions $d \geq 5$,   a regime that remains open and is considerably more delicate.   In this paper, we overcome this dimensional limitation  by introducing an alternative strategy that is applicable  for all dimensions  $d \geq 3$.

	 \subsection{Main results} \label{Section:mainresult} 
For any $\omega>0$, we define  \begin{equation}
 			\begin{aligned}
 				& \mathcal{K}_{\omega}^+:=\lbr{ \phi \in  	\Sigma \setminus \lbr{0} :   ~ S_{\omega}(\phi)<  m_{GS}, \quad  Q(\phi)\geq 0 },\\
 				& 	\mathcal{K}_{\omega}^-:=\lbr{ \phi \in  	\Sigma \setminus \lbr{0} :  ~S_{\omega}(\phi)<  m_{GS}, \quad Q(\phi)< 0 }.
 			\end{aligned}		 
 		\end{equation}	    
Our main result is the following.

    \begin{theorem}
    \label{thm:alldimension}   
    Assume that   $d\geq 3$   and that \eqref{assumption} holds. Let $u(t)\in C(I,\Sigma )$ be a solution of \eqref{PHNLS}  with initial data $u_0=u(0)$, where $I=(-T_-,T_+)$ denotes the   maximal  lifespan with $T_\pm>0$. 
   Then we have the following dichotomy classification of dynamics.
    \begin{enumerate} \medbreak
      \item If $u_0\in \mathcal{K}_{\omega}^+$, then the solution $u(t)$ is globally well-posed and scatters in the sense that there exists $\varphi_\pm\in \Sigma $ such that 
       \begin{align*}
          \lim_{{t\to \pm\infty}}  \big\|u-e^{it(\Delta_{z}-y^2)}\varphi_\pm\big\|_{\Sigma }=0.
       \end{align*}
     
       \medbreak
       \item If $u_0\in \mathcal{K}_\omega^{-}$, then either $u(t)$ blows-up in finite time $T_\pm<+\infty$ such that 
       \begin{align*}
           \lim_{{t\to \pm T_\pm}}\|\nabla_zu(t)\|_{L^2(\R^{d+1})}=+\infty;
       \end{align*}
       or $u(t)$ blows-up in infinite time, i.e. $T_\pm=+\infty$ and there exists a sequence $\{t_k\}$  with $t_k\to\pm\infty$ such that
       \begin{align*}
           \lim_{t_k\to\pm\infty }\|\nabla_zu(t_k)\|_{L^2(\R^{d+1})}=+\infty.
       \end{align*}
      Moreover, if $|z| u_0\in L^2(\R^{d+1})$, then  $u(t)$  blows-up in finite time. 
   \end{enumerate}
\end{theorem}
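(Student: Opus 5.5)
The plan follows the Dodson--Murphy strategy: replace the concentration-compactness/rigidity argument by an interaction Morawetz estimate in the free $x$-direction, combined with a coercivity property of the semivirial functional $Q$ below the ground state.

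\textbf{Variational step.} I would first obtain an alternative characterization of $m_{GS}$ as a constrained minimum,
\[
m_{GS}=\inf\lbr{S_\omega(\phi):\phi\in\Sigma\setminus\lbr{0},\ Q(\phi)=0}.
\]
The identification with the infimum over solutions of \eqref{equ elliptic} uses the $x$-scaling $\phi^\lambda(x,y)=\lambda^{d/2}\phi(\lambda x,y)$. This scaling preserves $M$ and $\|\partial_y\phi\|_2$, $\|y\phi\|_2$. Moreover $\lambda\mapsto S_\omega(\phi^\lambda)$ has a unique maximum, attained exactly when $Q(\phi^\lambda)=0$. It gives the inequality
\[
S_\omega(\phi)-\tfrac{2}{\alpha d}Q(\phi)\ \ge\ \text{(positive quadratic part)},
\]
which controls $\|\nabla_z\phi\|_2$ and $\|y\phi\|_2$ whenever $Q(\phi)\ge0$. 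From this I derive the following.
\begin{itemize}
\item Invariance of $\mathcal K^\pm_\omega$ under the flow, by conservation of $M$ and $E$ and continuity of $Q$.
\item A uniform $\Sigma$-bound on $\mathcal K^+_\omega$, hence global existence.
\item A quantitative coercivity: if $S_\omega(u)\le(1-\delta)m_{GS}$, then $Q(u)\ge c_\delta\|\nabla_x u\|_2^2$.
\item In $\mathcal K^-_\omega$, the bound $Q(u(t))\le -c<0$ uniformly in time, via $Q(u)\le 2(S_\omega(u)-m_{GS})$ after the scaling argument.
\end{itemize}
I also need a localized version: for a cutoff $\chi_R(x)$, the function $\chi_R u$ still satisfies the coercivity, uniformly in the localization center. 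This holds because $Q(\chi_R u)\approx\chi_R^2$-weighted $Q(u)$ up to $O(R^{-2})$ errors.

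\textbf{Scattering part.} For scattering I would use the criterion from the local theory, in the spirit of Tao/Dodson--Murphy adapted to $e^{it(\Delta_z-y^2)}$ with Strichartz estimates in $L_x^pL_y^2$-type (or $\mathcal H^1_y$-valued) spaces. The criterion says: if for every $\epsilon$ there are $T$ and $R$ such that
\[
\int_{T-\epsilon^{-\sigma}}^T\int_{|x|\le R}\int_\R|u|^2\,dy\,dx\,dt\lesssim\epsilon\,\epsilon^{-\sigma},
\]
then $u$ scatters. Proving this criterion needs only the dispersive estimate in $x$ and Strichartz estimates. It avoids the nonlinear profile decomposition, which is what fails for $d\ge5$, since $\alpha<1$ there.

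The key estimate is an interaction Morawetz inequality in the $x$-variable, with weight $a(x_1-x_2)$ and localized at scale $R$ (the $y$-variable is simply integrated). This gives
\[
\frac1T\int_0^T\int_{\R^d}\Big|\nabla_x\Big(\int_\R|\chi_R u|^2dy\Big)\Big|^2\dots
\]
or, in the Dodson--Murphy form,
\[
\frac1T\int_0^T\int Q(\chi_R(\cdot-s)u)\,ds\,dt\lesssim \frac{R}{T}+o_R(1).
\]
The potential $y^2$ commutes with $x$-weights, so it contributes nothing. Combining this with the localized coercivity yields the smallness of local mass, hence scattering.

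The main obstacle is ensuring the Morawetz error terms are controlled by the uniform $\Sigma$ bound and that the localized coercivity holds in the anisotropic setting. To handle it I would carry the $y$-directional energy $\|\partial_yu\|^2+\|yu\|^2$ along in the variational inequality.

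\textbf{Blow-up part.} Assume $Q(u(t))\le -c$. If $|z|u_0\in L^2$, the identity $V_{semi}''=8Q\le-8c$ forces finite-time blow-up; only $|x|u_0\in L^2$ is needed. In general, I would use the localized virial $\int\psi_R(x)|u|^2$, whose error is $O(R^{-2}+R^{-\alpha d/2}\|\nabla_xu\|^{\alpha d/2})$. The $y$-part needs no localization since the weight depends only on $x$. Now suppose $\sup_{t\ge0}\|\nabla_z u(t)\|_2<\infty$ with $T_+=\infty$. Then the errors are small for large $R$, so $\frac{d}{dt}\mathrm{Im}\int\nabla\psi_R\cdot\nabla_x u\,\bar u\le -4c$. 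This contradicts $|\mathrm{Im}\int\nabla\psi_R\cdot\nabla_xu\,\bar u|\lesssim R$ for large $t$. Hence the gradient norm is unbounded along some sequence $t_k\to\infty$, giving infinite-time blow-up. If instead $T_+<\infty$, the blow-up alternative from the local theory gives $\|\nabla_z u(t)\|_2\to\infty$ as $t\to T_+$.
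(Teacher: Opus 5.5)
Your overall architecture matches the paper's: a scattering criterion, an $x$-only interaction Morawetz estimate, coercivity from the $Q$-constrained problem, and Glassey for blow-up. However, the step everything rests on is missing.

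\textbf{The variational identity.} The $x$-scaling argument only gives the Pohozaev direction. A solution of the elliptic equation satisfies $Q=0$, hence $m_\omega:=\inf\{S_\omega(\phi):\phi\neq 0,\ Q(\phi)=0\}\le m_{GS}$. Even this needs justification, since $x\cdot\nabla_x\phi$ need not lie in $\Sigma$; the paper proves it with a truncated virial for $e^{i\omega t}\phi_{GS}$. Every consequence you list requires the opposite inequality $m_{GS}\le m_\omega$, i.e.\ that no $\phi\neq0$ has $Q(\phi)=0$ and $S_\omega(\phi)<m_{GS}$. This covers flow invariance of $\mathcal K^\pm_\omega$, the uniform $\Sigma$ bound, coercivity, and $Q(u)\le 2(S_\omega(u)-m_{GS})$ on $\mathcal K^-_\omega$. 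To get it you must show two things, neither of which the scaling argument provides:
\begin{enumerate}
\item $m_\omega$ is attained. Non-vanishing of minimizing sequences cannot come from Gagliardo--Nirenberg on $\R^{d+1}$, because $Q$ has no $\partial_y$ term. The paper uses the anisotropic bound $\|\phi\|_{\alpha+2}^{\alpha+2}\lesssim\|\nabla_x\phi\|_2^{\alpha d/2}\|\partial_y\phi\|_2^{\alpha/2}\|\phi\|_2^{(4-\alpha(d-1))/2}$ plus compactness modulo $x$-translations.
\item The minimizer is a free critical point. Since $Q=0$ is not a natural constraint, the paper shows $S_\omega$ has a mountain-pass structure at level $m_\omega$ and runs a deformation argument to get $S_\omega'(\phi_\omega)=0$.
\end{enumerate}
This is the core of Section 4 and is absent from your plan.

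\textbf{The scattering part.} Your criterion (small mass in a ball $\{|x|\le R\}$ on a time window) is a radial criterion and is false for non-radial data. The equation is Galilean invariant in $x$, so $e^{i(x\cdot v-|v|^2t)}e^{i\omega t}\phi_{GS}(x-2vt,y)$ is global and bounded in $\Sigma$. Its mass in any fixed ball vanishes as $t\to\infty$, yet it does not scatter. You need the non-radial form the paper uses: smallness of $\|u\|_{L^q_tL^r_x\mathcal H^s_y}$ on $[T-\varepsilon^{-\sigma},T]$, with the distant past handled by the $L^\infty_xL^2_y$ dispersive estimate. The Morawetz bound, integrated over all centers $s\in\R^d$, is then converted into this by covering $\R^d$ with translated cubes and interpolating.

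\textbf{The coercivity mechanism.} In the interaction Morawetz estimate the momentum cross-term has the wrong sign. It disappears only after passing to $u^\xi$ with $\xi=\xi(t,s,R)$, so coercivity is needed for $\chi_R(\cdot-s)u^\xi$. This does not follow from ``$Q(\chi_R u)$ is a $\chi_R^2$-weighted $Q(u)$'': the nonlinear term carries $\chi_R^{\alpha+2}$, and $S_\omega$ is not monotone under localization. The paper works with $I_\omega=S_\omega-\frac{2}{\alpha d}Q$ instead. The choice of $\xi$ gives $I_\omega(\chi_R(\cdot-s)u^\xi)\le I_\omega(u)+O(R^{-2})<m_\omega$. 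Then $m_\omega=\inf\{I_\omega(\phi):Q(\phi)\le0\}$ forces $Q>0$, and a quantitative bound follows. This step again rests on $m_{GS}\le m_\omega$.

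\textbf{The blow-up part.} For non-radial data the localized-virial error $\int_{|x|>R}|u|^{\alpha+2}$ has no $R^{-\alpha d/2}$ decay. It is also not uniformly small in $t$ at fixed $R$. What is needed is exterior-mass control of size $o_R(1)+Ct/R$ under the bounded-gradient hypothesis, with $R\sim T$. That choice of $R$ makes your bound $|V_R'|\lesssim R$ too weak. Instead one integrates $V_R''\le -c$ twice, using $V_R(0)=o(R^2)$ and $V_R'(0)=o(R)$, as in the Ardila--Carles argument the paper defers to.
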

\vskip 0.08in
   We note that Ardila and Carles \cite{Ardila-Carles2021} proved  Theorem \ref{thm:alldimension} for dimensions $1\leq d\leq 4$.  Their proof of scattering  is based on the concentration-compactness and rigidity argument pioneered by Kenig and Merle   \cite{KM-Invent}.
This framework, however, does not extend to dimensions $d \geq 5$, because   $|\phi|^\alpha $ fails to be Lipschitz continuous   in higher dimensions.     More precisely, the argument in \cite[Proposition 5.6]{Ardila-Carles2021} requires the condition     $\alpha \geq 1$,  which only holds in  dimensions $1\leq d \leq 4$.   A similar obstruction appears in the study of the standard NLS \eqref{Free-NLS}. To overcome this,   Tao-Visan \cite{Tao-Visan} utilized the fractional calculus to prove a weaker stability result, which is sufficient for the purpose.  However, the anisotropic nature will bring us new difficulties and we do not know whether such weaker stability result holds since the order of derivatives in $x,y$-directions are  different. 
\vskip0.04in
In proving the scattering result  for $d\geq 3$ in Theorem \ref{thm:alldimension},  we introduce an alternative approach     utilizing interaction Morawetz-Dodson-Murphy (IMDM) estimates \cite{DM-MRL} and a coercivity  estimate for the semivirial functional $Q$ (see Lemma \ref{lem coercivity}). A key observation is that,  despite the lack of spatial translation and scaling invariance in the equation, the method is highly compatible with the roadmap developed by Dodson and Murphy \cite{DM-MRL}.   In their work, a scattering criterion is introduced and subsequently verified via an interaction Morawetz estimate. A crucial ingredient in this argument is a suitable coercivity property, where the variational structure of the ground state plays a central role, such as the sharp Gagliardo–Nirenberg inequality and Pohozaev identities. In \cite{DM-MRL}, this coercivity estimate follows from  a refined Gagliardo-Nirenberg inequality (see \cite[Lemma 2.1]{DM-MRL})
\begin{equation}\label{refined-GN}
\|f\|_{L^{\frac{2(d+2)}{d-1}}(\R^d)}^{\frac{2(d+2)}{d-1}}\leq C_{GN}\|f\|_{L^2(\R^d)}\|\nabla f\|^{\frac{2
}{d-2}}_{L^{2}(\R^d)}\|\nabla (e^{ix\cdot\xi}f)\|_{L^2(\R^d)}^{\frac{2d}{d-1}},
\end{equation}
which holds for any $f\in H^1(\R^d)$ and any $\xi\in\R^d$.   
\vskip0.04in
However, this inequality is not applicable in our context. Indeed, due to the presence of the partial harmonic potential $y^2$ and the inhomogeneous nature of the nonlinearity, our problem lacks both translation and scaling invariance, rendering refined estimates such as \eqref{refined-GN} unavailable. This lack of symmetry not only makes the variational problem much more complicated compared to the free case as in \cite{DM-MRL}, but also brings difficulty in proving the interaction Morawetz estimate.    It is here that the semivirial functional $Q$ becomes indispensable, which serves as the key tool that adapts the Dodson-Murphy roadmap to the non-translation-invariant, non-scale-invariant framework of our problem, thereby distinguishing our approach from \cite{Ardila-Carles2021}.

 \vskip0.08in
We now outline the main steps and ideas in the proof of Theorem \ref{thm:alldimension}. The  blow-up result, which is included here to provide a complete characterization of the dynamics of \eqref{PHNLS}, follows from   the Glassey's convex method \cite{Glassey}  and can be obtained by following the arguments in \cite{Ardila-Carles2021}. The scattering result, which constitutes the main contribution of this work, can be proved in four steps.

   \vskip0.04in
 Step 1. To implement the strategy proposed in \cite{DM-MRL}, we first establish a scattering criterion for \eqref{PHNLS} (see Lemma \ref{scattering-cri}).  In our setting, the presence of potential $y^2$ breaks the dispersive estimate. To resolve this, we observe that in the anisotropic norm $L_x^\infty  L_y^2(\R^d\times\R)$, one can capture the dipsersive effect by the $x$-variable. This requires us to  establish such criterion with the anisotropic Strichartz norm.  
 
 \vskip0.04in
 Step 2.  To verify this scattering criterion,  we employ an interaction Morawetz estimate developed by Dodson and Murphy \cite{DM-MRL}, as detailed in Theorem \ref{thm5.1}.    Notably, our choice of multiplier involves only the $x$-direction and therefore differs from that used in \cite{DM-MRL}.  In the proof of Theorem \ref{thm5.1},   a coercivity estimate associated with the virial functional $Q$ plays a crucial role, see Lemma \ref{lem coercivity}. This estimate hinges on the following minimization problem 
\begin{equation} 
 	m_\omega=\inf\lbr{S_{\omega}(\phi):~\phi \in 	 \Sigma  \setminus \lbr{0}, \quad	Q(\phi)=0 }.  
     \end{equation}
     In  Theorem \ref{thm groundstate}, we prove   that this infimum $m_\omega$ is attained by a minimizer  $\phi_\omega$, which is also a solution to \eqref{equ elliptic}.  Solving this variational problem is highly nontrivial,   as  the relation
$Q(\phi) = 0$ is not a natural constraint and introduces certain analytical difficulties. We discuss these challenges in detail in Section \ref{Section Variation}.  
 
 \vskip0.04in
 Step 3.  We introduce a new   set  
 \begin{equation} \label{defi of P}
 			\begin{aligned}
 				& P_{\omega}^+:=\lbr{ \phi \in  	\Sigma  :  ~\phi \neq 0,  \quad	S_{\omega}(\phi)<  m_{\omega}, \quad Q(\phi)\geq 0 }.\\
 			\end{aligned}		 
 		\end{equation}
In  Theorem \ref{thm:main}, by applying the interaction Morawetz estimate, we  prove  that any solution $u(t)$ of \eqref{PHNLS}  is globally well-posed and scatters if  the initial data $ u_0\in P_{\omega}^+$.

\vskip0.04in
 Step 4. It remains to show that $ P_{\omega}^+= \mathcal{K}_{\omega}^+$, or, equivalently, that  $  m_\omega=m_{\mathrm{GS}}$, see Lemma \ref{lemma msequaltomgs}. This equality is not a priori obvious. While Theorem \ref{thm groundstate} provides the inequality  $m_{\mathrm{GS}} \leq m_\omega$,  the reverse inequality $m_\omega \leq m_{\mathrm{GS}}$ does not hold in general because the ground state $\phi_{\mathrm{GS}}$  may fail to satisfy the constraint $Q(\phi_{\mathrm{GS}}) = 0$ due to the presence of  potential $y^2$. To establish this condition,    we introduce a   truncation semivirial functional. More precisely, for any $R>0$, we define
    \[
    V_{R}(t):=\int_{\R^{d+1}}\tilde \varphi_R(x)|e^{i\omega t}\phi_{GS}(z)|^2 dz,
    \] 
 where  $\tilde \varphi_{R}=R^2\tilde \varphi(\frac{x}{R})$. Here, $\tilde \varphi$  is a positive radial smooth function in $C_0^{\infty}(\R^d)$ satisfying  $\tilde \varphi(r)=r^2 $ for  $r=|x|\leq 1$  and $\tilde \varphi(r)''\leq 2 $ for  $r\geq 0$. Obviously, $ V_{R}(t)$ is  independent of $t\in\R$, which implies that $  V_R^{\prime\prime}(t)\equiv 0$. Moreover,   by a direct calculaion, we have   
    \begin{align*}
        V_R^{\prime\prime}(0)=8Q(\phi_{GS})+A_R,
    \end{align*}
    where the remainder term $A_R$ involves the derivatives of  $\tilde \varphi_R(r)$.
    Using the property of the weighted function $\tilde \varphi_R(r)$, we can show that $|A_R|\to0$ as $R\to\infty$, which provides that $$Q(\phi_{GS})=0.$$ This completes the proof.

\vskip0.1in
Finally, we emphasize that in comparison with recent works addressing combined nonlinearities (cf. \cite{BDF-SIAM}), where symmetry-breaking also poses challenges, the variational problem we encounter is substantially more intricate. This increased complexity stems not only from the loss of spatial translation/Galilean invariance induced by the potential, but also from the anisotropic characterization of the partial harmonic oscillator, which forces us to treat the $x$ and $y$ directions with different Sobolev exponents. Consequently, both the resolution of the minimization problem $m_\omega$ and the derivation of the coercivity estimate become comparably more delicate. As a result, our proof is not a straightforward adaptation of \cite{DM-MRL, BDF-SIAM}; rather, it requires overcoming difficulties specific to the partial harmonic confinement and developing new variational tools to bypass these obstacles.

  \begin{remark}{\rm (The NLS on the waveguide manifolds)
Our results can be compared to those for NLS posed on waveguide manifolds   $\R^d\times\Bbb T $, which can be  written as
\begin{align}\label{eq:waveguide}
\begin{cases}
i\partial_tu+\Delta_{z}u=\mu|u|^{\al}u, \quad t\in \R,\\
u(0,z)=  u_0(z),
    \end{cases}
\end{align}
where $z=(x,y)\in \R^d\times\Bbb T$ and $d\geq 1$. Observe that the   compactness  of $\Bbb T$   indicates that any dispersive effects must be purely provided by the $\R^d$-side.   For the defocusing case $\mu=1$, scattering in the range \eqref{assumption} was proved by Tzvetkov-Visciglia \cite{TV2}. In  the focusing case  $\mu=-1$,  Luo \cite{Luo-Arxiv}  considered a variational problem  with   prescribed $L^2$-norm and showed that,  for all $d\geq 1$,  it admits a ground state  that depends only on the dispersive variable  $\R^d$ and that can be trivially extended  on   the   direction $\Bbb T$. By using the classical concentration compactness arguments initiated by \cite{KM-Invent},  a scattering/blow-up dichotomy in  $H^1(\R^d\times\Bbb T)$ is established for $1\leq d\leq4$.  The higher dimensional case $d\geq 5$ was later treated  in \cite{Luo-MA} via the Dodson–Murphy argument. However,  the situation in our case is different. Indeed, due to the  non-compactness  of $\mathbb{R} $, the ground state of \eqref{equ elliptic} cannot be trivially extended  on the $y$-direction while preserving the finite-energy property.  
 
}
    \end{remark}
    \begin{remark}
        {\rm  (The NLS with general harmonic trapping) We consider the   problem 
\begin{equation} \label{generalHNL}
    \begin{cases} 
    i\partial_tu+\Delta_z u-\sum_{j=1}^N y_j^2 u=-|u|^{\alpha}u,&t\in \R,\\
  u(0,z)=  u_0(z),
\end{cases} 
\end{equation} 
where $z=(x,y)\in \R^d\times \R^N$. When there is only one free direction (that is, $d=1$), Hani and Thomann \cite{Hani-Thomann}  used a normal-form approach to construct  a special quasi-periodic solution  for which scattering does not occur.
 It is conjectured that scattering does not occur when $N \gg d$, which remains an interesting open problem.   We would also like to mention that in \cite{Bellazzini=CMP2017,jeanjean-song2025}, the authors investigated the existence, multiplicity, stability and instability of standing waves of \eqref{generalHNL} with the two-dimensional harmonic trap $N=2$ and one free direction $d=1$.}
    \end{remark}
\vskip0.04in

\subsection{Structure of the paper} 
 This paper is organized as follows. In Section 2, we collect the harmonic analysis tools around the partial Hermite operator $H=-\Delta_z+y^2$ and the local Cauchy theory. In Section 3, we show the scattering criterion in the style of Dodson-Murphy \cite{DM-MRL}. In Section 4, we establish the variational characterization of the semi-virial functional and investigate the minimization problem $m_\omega$. In Section 5, We established the interaction Morawetz estimate. In Section 6, we finish the proof of Theorem \ref{thm:alldimension}. More precisely, we utilize the Morawetz estimate in previous section to verify the assumption on scattering criterion, which yield the scattering result for solutions with initial data belonging to $P_\omega^+$. Also, we present the virial analysis to show the equivalence of threshold between ours and the one appeared in Ardila-Carles \cite{Ardila-Carles2021}, that is $m_\omega=m_{GS}$.

	\section{Preliminaries}
	In this section, we collect the basic harmonic analysis tools associated to $-\partial_y^2+y^2$ with $y\in\R$ and the local theory to \eqref{PHNLS}.

    \subsection{Fourier transform and functional spaces}
In this subsection, we recall the basic definition and properties of Fourier transform and some useful functional spaces.

First, we define the Fourier transform of function $f$ as
\begin{align*}
	(\mathcal{F}f)(\xi)=\frac{1}{(2\pi)^{\frac  {d+1}2}}\int_{\R^{d+1}}f(z)e^{-iz\cdot\xi}\,dz,\quad \xi\in\R^{d+1}
\end{align*}
and the inverse Fourier transform of $\mathcal{F}f$  as
\begin{align*}
	 (\mathcal{F}f)^\vee(z)=\frac{1}{(2\pi)^\frac{d+1}{2}}\int_{\R^{d+1}}\widehat{f}(\xi)e^{iz\cdot\xi}d\xi.
\end{align*}
Then, for any $  s\in\R$, we can define the fractional order differential operator $|\nabla|^s$ as $$(|\nabla|^sf)^\wedge(\xi)=|\xi|^s\widehat{f}(\xi).$$ Similarly, the operator $\langle\nabla\rangle^s$ can be defined as $$(\langle\nabla\rangle^sf)^\wedge(\xi)=\langle\xi\rangle^s\widehat{f}(\xi):=(1+|\xi|^2)^\frac{s}{2}\widehat{f}(\xi). $$
In our paper, we also need the Fourier transform and inverse Fourier transform with respect to one variable. For example, we give the definition of Fourier transform on $x$-variable. For $z=(x,y)\in\R^d\times\R$, we define
\begin{align*}
	(\mathcal{F}_{x}f)(\xi,y)=\frac{1}{(2\pi)^\frac d2}\int_{\R^d}f(x,y)e^{-ix\cdot\xi}dx,\quad\xi\in\R^d.
\end{align*}
   Let   $\chi\in C_0^\infty(\R^+,\R)$ be a cut-off function such that $\chi(t)=1$ if $t\leqslant1$ and $\chi(t)=0$ if $t\geqslant2$.  For $N\in2^{\Bbb{Z}}$, we let 
\begin{align*}
	\psi_N(t)=\psi(N^{-1}t),\quad \varphi_N(t)=\psi_{N}(t)-\psi_{N/2}(t).
\end{align*}
Define the full Littlewood-Paley projector as
\begin{align*}
	P_{\leqslant N}f(x)&=\mathcal{F}^{-1}(\psi_N(|\xi|)\widehat{f}(\xi)),\quad x,\xi\in\R^{d+1},\\
	P_Nf(x)&=\mathcal{F}^{-1}(\varphi_N(|\xi|)\widehat{f}(\xi)),\quad x,\xi\in\R^{d+1}, 
\end{align*}
and the partial Littlewood-Paley projector by
\begin{align*}
	P_{\leqslant N}^{x}(z)&=\mathcal{F}_{x}^{-1}(\chi_N(|\xi|)(\mathcal{F}_{x}f)(\xi,y)),\hspace{1ex}x,\xi\in \R^d,\,y\in\R,\\
	P_{ N}^{x}(z)&=\mathcal{F}_{x}^{-1}(\varphi_N(|\xi|)(\mathcal{F}_{x}f)(\xi,y)),\hspace{1ex}x,\xi\in\R^d,\,y\in\R.
\end{align*}
Next, we denote the standard Lebesgue spaces by $L^p(\R^{d+1})$ and its norm is given by
\begin{align*}
\|f\|_{L^p(\R^{d+1})}=\Big(\int_{\R^{d+1}}|f(z)|^p\,dz\Big)^\frac{1}{p}.
\end{align*}  
Let $I$ be the  time interval, we denote the mixed space-time Lebesgue space as $L_t^qL_z^r(I\times\R^{d+1})$ with the norm
\[ 
\|  u \|_{L_{t}^{q}L^{r}_{z}(I\times \R^{d+1})}=\|  \|u(t) \|_{L^{r}_{z}(\R^{d+1})}  \|_{L^{q}_{t}(I)}.
\]
We denote $W^{s,p}$ by the inhomogeneous  Sobolev space,
\begin{align*}
	W^{s,p}(\R^{d+1}):=\left\{f\in L^p(\R^{d+1}):\|f\|_{W^{s,p}(\R^{d+1})}:=\|\langle\nabla\rangle^sf\|_{L^p(\R^{d+1})}<+\infty\right\}.
\end{align*}
	\subsection{Harmonic oscillator and associated functional space}
The Schr\"odinger operator with harmonic potential $H=- \partial_y^2+y^2$ with $y\in\R$   is an important model in mathematics and physics. The one-dimensional harmonic oscillator can form a $L^2$-basis.   We denote $E_n$ by the $n$-th eigenspace related to $H$ and $\lambda_n=2n+1$ the $n$-th eigenvalues. The eigenspace can be generated by the Hermite function $e_n$ which is given by
\begin{align*}
	e_n(y)=\frac{1}{\sqrt{n!}2^{\frac{n}{2}}\pi^\frac{1}{4}}(-1)^ne^{\frac{y^2}{2}}\frac{d^n}{dy^n}(e^{-y^2}),\quad y\in\R,
\end{align*}  
and $e_n$ statisfies the following equation
\begin{align*}
	(-\partial_{y}^2+ y^2)e_n(y)=(2n+1)e_n(y).
\end{align*}
We denote $\Pi_n$ by the spectral projector on the $n$-th eigenspace $E_n$. For any $f\in L^2(\R^{d+1})$, we can decompose it as $$f(z)=\sum\limits_{n\in\N}\Pi_nf(x,y):=\sum_{n\in\N}\langle f,e_n\rangle_{L_{y}^2(\R)} e_n(y),$$
where $$\langle f,e_n\rangle_{L_{y}^2(\R)}=\int_{\R}f(x,y)\overline{e_n(y)}dy.$$ 
For $s\in\R$ and $p\geqslant1$, we denote the inhomogeneous Sobolev space $\mathcal{W}_y^{s,p}(\R)$ by the following
\begin{align*}
	\mathcal{W}_y^{s,p}(\R)=\big\{u\in L_y^p(\R):\|u\|_{\mathcal{W}_y^{s,p}(\R)}:=\|\langle\partial_y\rangle^su\|_{L_y^p(\R)}+\|y^su\|_{L_y^p(\R)}<\infty\big\}.
\end{align*} 
If $p=2$, the above norm is equivalent to the  following, which was proved in \cite{equi}
\begin{align*}
	\|f\|_{\mathcal{W}_y^{s,2}(\R)}^2=\sum_{n\in\N}(2n+1)^s\|\Pi_nf\|_{L_y^2(\R)}^2.
\end{align*}
Throughout  this article, we denote $\mathcal{W}_y^{s,2}(\R)$ by $\mathcal{H}_y^s(\R)$. We denote $L_{x}^p\mathcal{H}_{y}^s(\R^d\times\R)$  by
\begin{align*}L_{x}^{p}\mathcal{H}_{y}^{s}(\R^d\times\R)=\bigg\{f\in L_{x}^{p}L_{y}^{2}&(\R^d\times\R):\|f\|_{L_{x}^{p}\mathcal{H}_{y}^{s}(\R^d\times\R)}:=\bigg(\int_{\R^d}\|f(x,\cdot)\|_{\mathcal{H}_{y}^{s}(\R)}^{p}dx\bigg)^{1/p}\\&=\bigg(\int_{\R^d}\left\|\bigg(\sum_{n\in\mathbb{N}}(2n+1)^{s}|f_{n}(x,y)|^{2}\bigg)^{\frac{1}{2}}\right\|_{L_{y}^{2}(\R)}^{p}dx\bigg)^{1/p}<\infty\bigg\},\end{align*}
where $f_n=\Pi_nf$. 
We can define the Hermite-Sobolev spaces $H_{x}^{s_1}\mathcal{H}_{y}^{s_2}$ and $\dot{H}_{x}^{s_1}\mathcal{H}_{y}^{s_2}$ as 
\begin{align*}
	\|f\|_{H_{x}^{s_1}\mathcal{H}_{y}^{s_2}(\R^d\times\R)}^2=\int_{\R^d}\|\langle\nabla_{x}\rangle^{s_1}f(x,\cdot)\|_{\mathcal{H}_{y}^{s_2}(\R)}^2dx.
\end{align*}
and
\begin{align*}
	\|f\|_{\dot{H}_{x}^{s_1}\mathcal{H}_{y}^{s_2}(\R^d\times\R)}^2=\int_{\R^d}\||\nabla_{x}|^{s_1}f(x,\cdot)\|_{\mathcal{H}_{y}^{s_2}(\R)}^2dx,
\end{align*}
For arbitrary time interval $I\subset\R$ and $f:I\times\R^d\times \R\to\Bbb{C}$, we can define the mixed norm
\begin{align*}
	\|f\|_{L_t^pL_{x}^qW_{y}^{s,r}(I\times\R^d\times\R)}\stackrel{\triangle}{=}\Bigg(\int_{I}\bigg(\int_{\R^d}\big\|\langle\partial_{y}\rangle^sf(t,x,\cdot)\big\|_{L_{y}^r(\R)}^qdx\bigg)^\frac{p}{q}dt\bigg)^\frac{1}{p}
\end{align*}
and
\begin{align*}
\|f\|_{L_t^pL_{x}^q\mathcal{H}_{y}^{s}(I\times\R^d\times\R)}\stackrel{\triangle}{=}\Bigg(\int_{I}\bigg(\int_{\R^d}\big\|f(t,x,\cdot)\big\|_{\mathcal{H}_{y}^s(\R)}^qdx\bigg)^\frac{p}{q}dt\bigg)^\frac{1}{p}
\end{align*}
with $1\leqslant p,q,r\leqslant\infty$. We also use the following norms for sequence $\{f_n\}_{n\in\Bbb{N}}$
\begin{align*}
	\|f_n\|_{L_t^pL_{x}^qL_{y}^r\ell_n^2(I\times\R^d\times\R\times\Bbb{N})}\stackrel{\triangle}{=}\big\|\|f_n\|_{\ell_n^2(\N)}\big\|_{L_t^pL_{x}^qL_{y}^r(I\times\R^d\times\R)}.
\end{align*}
We end this section by giving a Moser type estimate, which is crucial in establishing the nonlinear estimate.
\begin{lemma}[Moser type estimate, \cite{Jao-CPDE}]
	For any $m\geq1$, $\gamma\in(0,1]$ and $p_i,q_i,r\in(1,\infty)$ satisfying $\frac{1}{r}=\frac{1}{p_i}+\frac{1}{q_i}$, $i=1,2$. Then, we have
	\begin{align*}
		\big\|H^\gamma(fg)\big\|_{L^r(\R^d)}\lesssim \|H^\gamma f\|_{L^{p_1}(\R^m)}\|g\|_{L^{q_1}(\R^m)}+\|H^\gamma f\|_{L^{p_2}(\R^m)}\|g\|_{L^{q_2}(\R^m)},
	\end{align*}
where $H^\gamma$ is defined by the functional calculus of the harmonic oscillator $H$.
\end{lemma}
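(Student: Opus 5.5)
The plan is to reduce the estimate for the harmonic oscillator $H$ to the classical fractional Leibniz (Kato--Ponce) rule. The bridge is the $L^p$ norm equivalence
\begin{equation*}
\|H^\gamma f\|_{L^p(\R^m)}\sim \|\langle\nabla\rangle^{2\gamma} f\|_{L^p(\R^m)}+\|\langle x\rangle^{2\gamma} f\|_{L^p(\R^m)},\qquad 1<p<\infty,\ \gamma\in(0,1].
\end{equation*}
I read the statement as intended for $H=-\Delta+|x|^2$ on $\R^m$, with all norms on $\R^m$. I also take the second term on the right as the symmetric one, $\|f\|_{L^{p_2}}\|H^\gamma g\|_{L^{q_2}}$, since that is the only form a Leibniz rule can have. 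Once the equivalence holds, the lemma splits into a derivative part and a weight part, and each part is elementary.

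\emph{Step 1 (norm equivalence).} At the endpoint $\gamma=1/2$, the equivalence is the $L^p$-boundedness of the Hermite Riesz transforms $\partial_{x_j}H^{-1/2}$ and $x_jH^{-1/2}$ (Thangavelu). Since the spectrum of $H$ is bounded below by $m\ge 1$, it also follows that $H^{-1/2}$ is bounded on $L^p$, which gives the lower-order terms. For $\gamma=1$ I would use the identity $Hf=-\Delta f+|x|^2f$ together with the same Riesz transforms applied twice. The reverse inequality, controlling $\|\Delta f\|_p$ and $\||x|^2f\|_p$ by $\|Hf\|_p$, follows from second-order Riesz transforms or from commutator identities such as $[\partial_j,H]=2x_j$. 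For intermediate $\gamma$ I would interpolate, using complex interpolation between the cases $\gamma=0$, $\gamma=1/2$ and $\gamma=1$. This needs bounds on imaginary powers, $\|H^{is}\|_{L^p\to L^p}\lesssim \langle s\rangle^{C}$, which hold because $e^{-tH}$ has a Gaussian-bounded kernel (Mehler), so the spectral multiplier theorem applies. The same argument also applies to $\langle\nabla\rangle^{2\gamma}$ and $\langle x\rangle^{2\gamma}$, whose imaginary powers are trivially bounded.

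\emph{Step 2 (derivative part).} By the classical Kato--Ponce inequality,
\begin{equation*}
\|\langle\nabla\rangle^{2\gamma}(fg)\|_{L^r}\lesssim \|\langle\nabla\rangle^{2\gamma}f\|_{L^{p_1}}\|g\|_{L^{q_1}}+\|f\|_{L^{p_2}}\|\langle\nabla\rangle^{2\gamma}g\|_{L^{q_2}},
\end{equation*}
valid for $0<2\gamma\le 2$. Each $\langle\nabla\rangle^{2\gamma}$ norm on the right is then bounded by the corresponding $H^\gamma$ norm via Step 1.

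\emph{Step 3 (weight part).} H\"older's inequality gives
\begin{equation*}
\|\langle x\rangle^{2\gamma}fg\|_{L^r}\le \|\langle x\rangle^{2\gamma}f\|_{L^{p_1}}\|g\|_{L^{q_1}}\lesssim\|H^\gamma f\|_{L^{p_1}}\|g\|_{L^{q_1}}.
\end{equation*}
Adding Steps 2 and 3 and applying Step 1 to the left-hand side $\|H^\gamma(fg)\|_{L^r}$ completes the argument.

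The main obstacle is Step 1 for non-integer, non-half-integer $\gamma$, and specifically the two-sided comparison. The direction $\|\langle\nabla\rangle^{2\gamma}f\|_p\lesssim\|H^\gamma f\|_p$ amounts to boundedness of $\langle\nabla\rangle^{2\gamma}H^{-\gamma}$. I would obtain it by Stein's analytic interpolation applied to the family $\langle\nabla\rangle^{2z}H^{-z}$ on the strip $0\le\operatorname{Re}z\le1$, using the polynomial growth in $s$ of $\|H^{is}\|_{L^p\to L^p}$ and $\|\langle\nabla\rangle^{is}\|_{L^p\to L^p}$ on the boundary lines. The converse direction goes the same way with the family $H^{z}(\langle\nabla\rangle^{2z}+\langle x\rangle^{2z})^{-1}$. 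Here I expect the difficulty, since the sum of the two operators does not factor nicely. If that fails, I would instead cite the Hermite-Sobolev equivalence of Dziubański--Glowacki (or the result in \cite{Jao-CPDE}), which is precisely this equivalence.
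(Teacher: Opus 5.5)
Your proposal is correct. The paper gives no proof of this lemma; it is quoted from \cite{Jao-CPDE}. Your reduction is the usual way that estimate is obtained: the $L^p$ equivalence $\|H^\gamma f\|_{L^p}\sim\|\langle\nabla\rangle^{2\gamma}f\|_{L^p}+\|\langle x\rangle^{2\gamma}f\|_{L^p}$, then the Euclidean fractional Leibniz rule for the derivative part and H\"older for the weight part. Your reading of the statement is also the right one. As printed, with $H^\gamma f$ in both terms, the inequality fails already for $\gamma=1$, $f$ a Gaussian and $g$ the indicator of a cube.

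The one step that would not go through as written is the converse bound $\|H^\gamma f\|_{L^p}\lesssim\|\langle\nabla\rangle^{2\gamma}f\|_{L^p}+\|\langle x\rangle^{2\gamma}f\|_{L^p}$ via the family $H^{z}(\langle\nabla\rangle^{2z}+\langle x\rangle^{2z})^{-1}$. On $\operatorname{Re}z=0$, the operator $\langle\nabla\rangle^{2is}+\langle x\rangle^{2is}$ is a sum of two non-commuting unitaries, and there is no reason for it to be invertible on $L^p$.

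You do not need the citation here, because the converse follows by duality from the direction you already prove. Set $\beta=1-\gamma\in[0,1)$. Your Stein interpolation bounds $\langle\nabla\rangle^{2\beta}H^{-\beta}$ and $\langle x\rangle^{2\beta}H^{-\beta}$ on every $L^{p}$, $1<p<\infty$. Hence their adjoints $H^{-\beta}\langle\nabla\rangle^{2\beta}$ and $H^{-\beta}\langle x\rangle^{2\beta}$ are bounded on every $L^p$ as well. For Schwartz $f$,
\begin{equation*}
H^{\gamma}f=H^{-\beta}Hf=\bigl(H^{-\beta}\langle\nabla\rangle^{2\beta}\bigr)\bigl(-\Delta\langle\nabla\rangle^{-2}\bigr)\langle\nabla\rangle^{2\gamma}f+\bigl(H^{-\beta}\langle x\rangle^{2\beta}\bigr)\bigl(|x|^{2}\langle x\rangle^{-2}\bigr)\langle x\rangle^{2\gamma}f.
\end{equation*}
Since $-\Delta\langle\nabla\rangle^{-2}$ is a Mikhlin multiplier and $0\le|x|^{2}\langle x\rangle^{-2}\le 1$, the converse inequality follows. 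The same device, $H^{1/2}f=H^{-1/2}Hf$ combined with the adjoint Riesz transforms, is also what gives the converse at $\gamma=1/2$, which your Step 1 did not spell out. With this, Steps 1--3 form a complete argument.
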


\begin{remark}
    In the rest of  this paper, unless otherwise specified, integrals with respect to $t$,  $x$ and $y$ are over $\R$, $\R^d$ and $\R$,  respectively. For brevity, we suppress the domains of integration in the notation.
\end{remark}
\subsection{Local theory}In this section, we will prove the local control result. We also collect the related result such as local well-posedness. Compared to the nonlinear Schr\"odinger equation without the potential, we use the anisotropic norm as the scattering size. 

Before presenting the local theory, we first recall the Strichartz estimates, which is very crucial in establishing both local and global theory for dispersive equations. We shall use the following notation. 
\begin{definition}[Schr\"odinger admissible pairs] We call $(p,q)\in\R^2$ the admissible pairs if $(p,q)$ satisfies the following 
	\begin{align*}
		\frac{2}{p}=d\Big(\frac{1}{2}-\frac{1}{q}\Big),\quad 2\leqslant p,q<\infty.
	\end{align*}
\end{definition}
Now we can state the Strichartz estimates for linear Schr\"odinger equations with a partially harmonic oscillator. These estimates were proved in Proposition 3.1 in \cite{Antonelli}.
\begin{proposition}[Strichartz estimates, \cite{Antonelli}]\label{Strichartz-Antonelli}
	Let $I$ be a interval,  $(p,q)$ and $(\tilde{p},\tilde{q})$  be any Schr\"odinger admissible pairs,  we have the following estimate
	\begin{align}\label{Strichartz-L^2}
		\big\|e^{it(\Delta_{x,y}-y^2)}f\big\|_{L_t^p(I,  L_{x}^qL_{y}^2)}&\lesssim\|f\|_{L_{x,y}^2 },\\
		\Big\|\int_{0}^{t}e^{i(t-s)(\Delta_{x,y}-y^2)}F(s,x,y)\Big\|_{L_t^p(I  ,L_{x}^qL_{y}^2)}&\lesssim\|F\|_{L_t^{\tilde{p}^\prime}(I  ,L_{x}^{\tilde{q}^\prime}L_{y}^2 )}\label{Strichartz-inhomo},\\
		\label{Stricharz-Sobolev}
		\big\|e^{it(\Delta_{x,y}-y^2)}f\big\|_{L_t^p(I,  L_{x}^q\mathcal{H}_{y}^\sigma) }&\lesssim\|f\|_{L_{x}^2\mathcal{H}_{y}^\sigma },\quad\sigma\geq0.
	\end{align}
	
\end{proposition}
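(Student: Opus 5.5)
The statement to prove is Proposition \ref{Strichartz-Antonelli}, the Strichartz estimates for $e^{it(\Delta_{x,y}-y^2)}$. The plan is to reduce everything to the free Schr\"odinger group on $\R^d$. The key points are that $\Delta_x$ and $\partial_y^2-y^2$ commute, and that $e^{it(\partial_y^2-y^2)}$ is unitary on $L^2_y$ and on every $\mathcal{H}^\sigma_y$.

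\textbf{Dispersive estimate.} Write
\begin{align*}
e^{it(\Delta_{x,y}-y^2)}=e^{it\Delta_x}\,e^{it(\partial_y^2-y^2)}.
\end{align*}
Set $U(t)=e^{it(\Delta_{x,y}-y^2)}$. Using the explicit kernel of $e^{it\Delta_x}$, Minkowski's inequality in $y$ gives, for fixed $x$,
\begin{align*}
\|U(t)f(x,\cdot)\|_{L^2_y}\le (4\pi|t|)^{-d/2}\int_{\R^d}\|e^{it(\partial_y^2-y^2)}f(x',\cdot)\|_{L^2_y}\,dx' .
\end{align*}
Unitarity in $y$ then yields the global bound
\begin{align*}
\|U(t)f\|_{L^\infty_xL^2_y}\lesssim|t|^{-d/2}\|f\|_{L^1_xL^2_y}.
\end{align*}
We also have the energy bound $\|U(t)f\|_{L^2_xL^2_y}=\|f\|_{L^2}$.

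\textbf{Homogeneous and inhomogeneous estimates.} Apply the Keel--Tao abstract Strichartz theorem with the Hilbert space $L^2_{x,y}$, realized as $L^2(\R^d;L^2_y)$ with values in a Hilbert space. The Keel--Tao argument goes through verbatim for Banach-valued Lebesgue spaces $L^q_x(L^2_y)$, because the interpolation between $L^1_xL^2_y\to L^\infty_xL^2_y$ and $L^2\to L^2$ is vector-valued Riesz--Thorin, which is valid here. This gives \eqref{Strichartz-L^2} and the dual estimate by the $TT^*$ argument. For the inhomogeneous estimate \eqref{Strichartz-inhomo} with arbitrary admissible pairs, use the Christ--Kiselev lemma in the non-endpoint case, and the Keel--Tao bilinear argument at the endpoint when $d\ge3$.

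\textbf{Sobolev version.} For \eqref{Stricharz-Sobolev}, note that $H_y^{\sigma/2}=(-\partial_y^2+y^2)^{\sigma/2}$ commutes with $U(t)$. By the norm equivalence quoted from \cite{equi}, $\|g\|_{\mathcal{H}^\sigma_y}\simeq\|H_y^{\sigma/2}g\|_{L^2_y}$. Applying \eqref{Strichartz-L^2} to $H_y^{\sigma/2}f$ therefore gives the claim.

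\textbf{Main obstacle.} The only delicate point is the endpoint $p=2$, where one must check that Keel--Tao's bilinear dyadic interpolation holds with values in $L^2_y$. I would handle this by noting that their proof uses only the abstract dispersive and energy bounds, together with real interpolation of $L^q_x(L^2_y)$ spaces, which behave exactly as in the scalar case.
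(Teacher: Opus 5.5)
Your proof is correct and takes the same route as the paper, which gives no argument of its own and only quotes Proposition 3.1 of \cite{Antonelli}; that result rests on your ingredients: the factorization $e^{it(\Delta_{x,y}-y^2)}=e^{it\Delta_x}e^{it(\partial_y^2-y^2)}$, the $L^1_xL^2_y\to L^\infty_xL^2_y$ dispersive bound (Lemma \ref{dispersive} in the paper), Keel--Tao for $L^2_y$-valued functions, and commutation with $(-\partial_y^2+y^2)^{\sigma/2}$ for the $\mathcal{H}^\sigma_y$ estimate. To remove your ``main obstacle'' entirely, expand in the Hermite basis with $f_n=\langle f,e_n\rangle_{L^2_y}$ and $F_n=\langle F,e_n\rangle_{L^2_y}$: the $n$-th coefficient is $e^{-it(2n+1)}e^{it\Delta_x}f_n$ (resp.\ $e^{-it(2n+1)}\int_0^{t}e^{i(t-s)\Delta_x}[e^{is(2n+1)}F_n(s)]\,ds$), and Minkowski's inequality (moving $\ell^2_n$ outside $L^p_tL^q_x$ since $p,q\ge2$, and inside $L^{\tilde p'}_tL^{\tilde q'}_x$ since $\tilde p',\tilde q'\le2$) reduces all three estimates, endpoint included and with the weights $(2n+1)^{\sigma/2}$ carried along, to the scalar Keel--Tao estimates on $\R^d$.
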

\begin{lemma}[Exotic Strichartz estimates,\cite{TV1,TV2}]Let $d\geq3$ and $\gamma\in\R$. There exists $q,r,q_1,r_1\in(2,\infty)$ such that the following estimate holds
\begin{align}\Big\|\int_{0}^{t}e^{i(t-s)(\Delta_{x,y}-y^2)}F(s,x,y)\Big\|_{L_t^q(I ,L_{x}^r\mathcal H_{y}^\gamma )}&\lesssim\|F\|_{L_t^{\tilde{q_1}^\prime}(I,L_{x}^{\tilde{r_1}^\prime}\mathcal H_{y}^\gamma)}\label{exotic-Strichartz-inhomo},
\end{align}
    where
    \begin{align}
        \frac{1}{q}+\frac{1}{q_1}<1,\,\,\frac{d-2}{d}<\frac{r}{r_1}<\frac{d}{d-2},\label{exotic-cond-1}\\
        \frac{1}{q}+\frac{r}{2}<\frac{d}{2},\,\,\frac{1}{q_1}+\frac{d}{r_1}<\frac{d}{2},\,\,\frac{2}q+\frac{2}{q_1}+\frac{d}{r_1}+\frac{d}{r}=d.\label{exotic-cond-2}
    \end{align}
    If $d=2$, the condition \eqref{exotic-cond-1} can be removed.
\end{lemma}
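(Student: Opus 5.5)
The plan is to reduce the inhomogeneous estimate \eqref{exotic-Strichartz-inhomo} to the corresponding exotic estimate for the free Schr\"odinger flow on $\R^d$, applied componentwise in the Hermite decomposition.

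\textbf{Step 1: Hermite diagonalization.} Decompose $F=\sum_n F_n(s,x)e_n(y)$ with $F_n=\langle F,e_n\rangle_{L^2_y}$. Since $-\partial_y^2+y^2$ acts on $e_n$ by the eigenvalue $\lambda_n=2n+1$, we have
\[
e^{i(t-s)(\Delta_{x,y}-y^2)}F=\sum_n e^{-i(t-s)\lambda_n}\,\big(e^{i(t-s)\Delta_x}F_n(s)\big)\,e_n(y).
\]
By the equivalence $\|f\|_{\mathcal H^\gamma_y}^2=\sum_n\lambda_n^\gamma\|\Pi_nf\|_{L^2_y}^2$ recalled above, the norm $L_t^qL_x^r\mathcal H_y^\gamma$ equals $\|\lambda_n^{\gamma/2}(\cdot)_n\|_{L_t^qL_x^r\ell^2_n}$. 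The weight $\lambda_n^{\gamma/2}$ commutes with everything, so it suffices to treat $\gamma=0$ with values in $\ell^2_n$.

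\textbf{Step 2: scalar exotic estimate.} For each fixed $n$, the inhomogeneous term is $e^{-it\lambda_n}\int_0^t e^{i(t-s)\Delta_x}\big(e^{is\lambda_n}F_n(s)\big)\,ds$. The unimodular phases $e^{\pm i\lambda_n t}$ do not affect any $L^q_tL^r_x$ norms. We then invoke Foschi's inhomogeneous (exotic) Strichartz estimate on $\R^d$, which is the content of \cite{TV1,TV2}. Its hypotheses are exactly the acceptability and scaling conditions \eqref{exotic-cond-1}--\eqref{exotic-cond-2}, including the gap condition $\frac{d-2}{d}<\frac{r}{r_1}<\frac{d}{d-2}$ for $d\geq3$, which is absent when $d=2$. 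Such exponents $q,r,q_1,r_1\in(2,\infty)$ exist and may be chosen non-admissible; we only need existence of one such family.

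\textbf{Step 3: passing to $\ell^2_n$.} This is the main obstacle. The scalar estimate is a linear operator bound $T:L^{q_1'}_tL^{r_1'}_x\to L^q_tL^r_x$, and we must upgrade it to a vector-valued ($\ell^2$) bound with the $n$-dependent phases. First, the phases can be absorbed: $T_n F_n=e^{-it\lambda_n}T(e^{is\lambda_n}F_n)$, so each $T_n$ has the same bound. For the $\ell^2$-valued extension I would use the Marcinkiewicz--Zygmund theorem: any bounded linear operator between $L^p$ spaces extends boundedly to $\ell^2$-valued functions. That theorem concerns a single operator, however, whereas here each component carries its own phase. To handle this, randomize: apply the scalar bound to $\sum_n\epsilon_n e_n$-combinations with Rademacher signs, and use Khintchine's inequality in the mixed norms. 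Since $q,r,q_1',r_1'$ are finite and greater than or equal to $1$, Khintchine and Minkowski apply.

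Alternatively, the $\ell^2$ step can be avoided by proving the estimate directly, as \cite{TV2} do on $\R^d\times M$. Their approach uses the dispersive bound $\|e^{it(\Delta_{x,y}-y^2)}\|_{L^1_xL^2_y\to L^\infty_xL^2_y}\lesssim|t|^{-d/2}$ stated earlier, together with $L^2_{x,y}$ unitarity, and then repeats Foschi's interpolation argument with $L^2_y$-valued (Hilbert-space-valued) functions. Foschi's proof uses only these two ingredients together with Hardy--Littlewood--Sobolev and bilinear interpolation, all of which are valid for Hilbert-valued functions. This second route is the cleaner one, and I would present it, noting that the $\mathcal H^\gamma_y$ weight commutes with the flow.
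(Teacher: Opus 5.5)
Your proposal is correct and follows the route the paper indicates. The paper gives no proof beyond citing \cite{TV1,TV2} and remarking that one expands in the Hermite basis in $y$ and follows Tzvetkov--Visciglia, which amounts to running Foschi's inhomogeneous estimates for $L^2_y$-valued functions using the $L^1_xL^2_y\to L^\infty_xL^2_y$ dispersive bound and $L^2$ unitarity, with $H_y^{\gamma/2}$ commuting with the flow. One simplification in Step 3: the phases are no obstacle, since $|e^{-it\lambda_n}T(e^{is\lambda_n}F_n)|=|T(G_n)|$ with $G_n:=e^{is\lambda_n}F_n$ and $|G_n|=|F_n|$, so the Marcinkiewicz--Zygmund (Krivine) $\ell^2$-extension of the single scalar operator $T$ applies directly, and your randomization is just the proof of that theorem.
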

\begin{remark}
	The Strichartz estimates for Schr\"odinger equation with partially harmonic oscillator is very closed to that in the waveguide manifold setting. The exotic Strichartz estimate can be proved by using eigenbasic expansion in $y$-direction and following the strategies  in \cite{TV1,TV2}. 
\end{remark}

For the pure harmonic oscillator, the effect of the potential implies that the dispersive estimate can only  hold locally-in-time instead of globally-in-time. By Mehler's formula, the associated heat kernel can be written as
 	\begin{align*}
		e^{t(\Delta-|z|^2)}(z,z^\prime)=e^{\alpha(t)(z^2+(z^\prime)^2)}e^{\frac{sinh(t)\Delta}{2}}(z,z^\prime),\quad(z,z^\prime)\in\R^{d+1}\times\R^{d+1},
	\end{align*}
	where $\alpha(t)=\frac{1-cosh(t)}{2sinh(t)}$.  Using the analytic continuation, we can write the solution to the  linear Schr\"odinger equation with partial harmonic oscillator $\Delta_y-y^2$ explicitly,
    \begin{align*}
	e^{it(\Delta_y-y^2)}f(x,y)=\frac{1}{(2\pi i\sin(2t))^\frac{1}{2}}\int_{\R}e^{\frac{i}{\sin(2t)} \big(\frac{y^2+\zeta^2}{2}\cos(2t)- y\cdot \zeta\big)}f(x,\zeta)d\zeta.
\end{align*}
Then one can verify that
\begin{equation}
    	\big\|e^{it( \Delta_{y}-y^2)}f(x,y)\big\|_{L_{y}^\infty }\lesssim|\sin(2t)|^{-\frac{1}{2}}\|f(x,y)\|_{L_{y}^1 },\quad\forall t\notin\frac{\pi}{2}\Bbb{Z}.
\end{equation} 
	However, by adding the strong dispersion of the $x$-direction, we have the global-in-time dispersive estimate:
    \begin{lemma}\label{dispersive}
Let $d\geq2$, we have
\begin{align*}
		\big\|e^{it(\Delta_{x,y}-y^2)}f(x,y)\big\|_{L_{x}^\infty L_{y}^2 }\lesssim|t|^{-\frac{d}{2}}\|f(x,y)\|_{L_{x}^1L_{y}^2 }.
	\end{align*}
    \end{lemma}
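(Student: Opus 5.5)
The plan is to factor the propagator as a tensor product and apply Minkowski's inequality.

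Since $\Delta_{x,y}-y^2=\Delta_x+(\partial_y^2-y^2)$ and the two pieces act on different variables, they commute, so
\[
e^{it(\Delta_{x,y}-y^2)}=e^{it\Delta_x}\,e^{it(\partial_y^2-y^2)}.
\]
The operator $U(t):=e^{it(\partial_y^2-y^2)}$ acts only in $y$. It is unitary on $L_y^2(\R)$ for each fixed $x$, because $-\partial_y^2+y^2$ is self-adjoint.

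Next we write out the free flow with its explicit kernel:
\[
e^{it\Delta_x}g(x)=(4\pi i t)^{-d/2}\int_{\R^d}e^{i|x-x'|^2/(4t)}g(x')\,dx'.
\]
Applying this to $g=U(t)f$, viewed as a function of $x$ with values in $L^2_y$, gives for each fixed $x$
\[
e^{it(\Delta_{x,y}-y^2)}f(x,\cdot)=(4\pi i t)^{-d/2}\int_{\R^d}e^{i|x-x'|^2/(4t)}\,U(t)f(x',\cdot)\,dx'.
\]
This is a Bochner integral of $L_y^2$-valued functions; the phase is a scalar of modulus one.

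Now take the $L_y^2$ norm and use Minkowski's integral inequality, followed by unitarity of $U(t)$:
\[
\big\|e^{it(\Delta_{x,y}-y^2)}f(x,\cdot)\big\|_{L_y^2}\le (4\pi|t|)^{-d/2}\int_{\R^d}\|U(t)f(x',\cdot)\|_{L^2_y}\,dx'=(4\pi|t|)^{-d/2}\|f\|_{L_x^1L_y^2}.
\]
Taking the supremum over $x$ gives the claimed estimate. The argument works for every $d\ge1$, so in particular for $d\ge2$.

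The only delicate point is justification. The kernel formula and the interchange of $U(t)$ with the $x$-integral should first be established for nice $f$, for instance Schwartz in $x$ with values in $L_y^2$, or finite Hermite expansions $f=\sum_{n\le N}f_n(x)e_n(y)$. In the Hermite picture the interchange is transparent: $U(t)$ acts on each mode by the phase $e^{-it(2n+1)}$, so the claim reduces to the scalar free dispersive bound applied componentwise, followed by Minkowski's inequality in $\ell^2_n$. The general case then follows by density of such functions in $L_x^1L_y^2$.
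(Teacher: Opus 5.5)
Your proof is correct. Factoring $e^{it(\Delta_{x,y}-y^2)}=e^{it\Delta_x}e^{it(\partial_y^2-y^2)}$ is legitimate because the two generators act in different variables and commute. Applying the free kernel in $x$ to the $L_y^2$-valued function $e^{it(\partial_y^2-y^2)}f$, and then using Minkowski's inequality and unitarity of the $y$-flow, gives the bound with constant $(4\pi|t|)^{-d/2}$. As you observe, this holds for every $d\ge1$.

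The paper states this lemma without proof. The way it is used in the proof of Lemma \ref{scattering-cri} is the same argument as yours, with the order of norms swapped instead of using a vector-valued kernel. There the paper bounds $L_x^\infty L_y^2$ by $L_y^2L_x^\infty$, applies the scalar free dispersive bound in $x$ for fixed $y$, and then uses Minkowski and the $L_y^2$-unitarity of $e^{it(\partial_y^2-y^2)}$.
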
 
Before stating the local well-posedness result, we  give some notations. We denote $X_1(t)$ and $X_2(t)$   by the following
\begin{align*}
	A_1(t)=y\sin(t)-i\cos(t)\partial_{y},\quad A_2(t)=y\cos(t)+i\sin(t)\partial_{y}.
\end{align*}
For $f\in\mathcal{S}(\R^{d+1})$, we have the following identity:
\begin{align}\label{transform}
	|A_1(t)f(x,y)|^2+|A_2(t)f(x,y)|^2=|yf(x,y)|^2+|\partial_{y}f(x,y)|^2.
\end{align}
By calculating the commutator, we have $[i\partial_t-H,A_j(t)]=0$,  where $H=- \partial_y^2+y^2$.  This implies that when operator $A_j(t)$ acts on $F(u)$, it is similar to take the derivatives. More precise, we have the pointwise bound 
\begin{align}\label{local-est1}
    \big|A_j(t)(|u|^\alpha u)\big|\lesssim |u|^\al|A_j(t)u|.
\end{align}
As a direct consequence, we have the following identity
\begin{align*}
	\|u\|_{\Sigma }^2=\|u\|_{L_{x}^2\mathcal{H}_{y}^1 }^2+\|A_1(t)u\|_{L_{x,y}^2 }^2+\|A_2(t)u\|_{L_{x,y}^2 }^2.
\end{align*}
Now we give the local well-posedness for \eqref{PHNLS}, which was proved in \cite{Ardila-Carles2021}
\begin{lemma}[Local Cauchy theory]\label{lwp}
Let $d\geq3$ and $u_0\in\Sigma$. Assume that \eqref{assumption} holds, then there exists $T=T(\|u_0\|_{\Sigma})>0$ such that \eqref{PHNLS} admits a unique local solution $u\in C([0,T],\Sigma)\cap L_t^qL_x^r\mathcal H_y^{s}$ with $s>\frac12$ and $(q,r)$ is the $\dot H^{\frac{d}{2}-\frac{2}{\al}}$-admissible pair.  Let $T_{max}$ be the maximal lifespan of $u$, then either $T_{max}=\infty$ (global-in-time) or $T_{max}<\infty$ and satisfies 
\begin{align*}
    \|\nabla_z u(t)\|_{L_{x,y}^2(\R^{d+1})}\to\infty,\,\,\mbox{as }t\to T_{max}.
\end{align*}Moreover, if there exists sufficiently small $0<\eta\ll1$ such that $\|u_0\|_{\Sigma}<\eta$, then $u\in C(\R,\Sigma)$ is globally well-posed and scatters in both time directions.
\end{lemma}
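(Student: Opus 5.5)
The plan is to run the standard contraction-mapping argument in a Strichartz space adapted to the anisotropic structure. We measure $u$ in $L_x^2\mathcal H_y^1$ and in the vector fields $A_1(t)u$, $A_2(t)u$, which together control the $\Sigma$-norm by the identity following \eqref{transform}. We also need to handle the $x$-gradient $\nabla_x u$; since $\nabla_x$ commutes with $i\partial_t+\Delta_z-y^2$, this is straightforward.

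\emph{Workspace and Duhamel map.} For $T>0$ we set
\[
X_T=\Big\{u:\ \sup_{(p,q)}\big(\|u\|_{L_t^pL_x^q\mathcal H_y^1}+\|\nabla_xu\|_{L_t^pL_x^qL_y^2}+\textstyle\sum_j\|A_j(t)u\|_{L_t^pL_x^qL_y^2}\big)\le 2C\|u_0\|_\Sigma\Big\},
\]
where the supremum runs over admissible pairs on $[0,T]$. We add the scattering-size norm $L_t^qL_x^r\mathcal H_y^s$ with $s>\tfrac12$, so that $\mathcal H_y^s\hookrightarrow L_y^\infty$ in one dimension. The map is $\Phi(u)=e^{it(\Delta_z-y^2)}u_0+i\int_0^te^{i(t-s)(\Delta_z-y^2)}|u|^\alpha u\,ds$.

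\emph{Linear part.} The linear term is bounded by Proposition \ref{Strichartz-Antonelli}. The estimate \eqref{Stricharz-Sobolev} handles $\mathcal H_y^\sigma$ regularity. For $\nabla_x$ and $A_j(t)$ we use commutation: $A_j(t)e^{it(\Delta_z-y^2)}=e^{it(\Delta_z-y^2)}A_j(0)$, and $A_j(0)u_0$ is controlled by $yu_0$ and $\partial_yu_0$.

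\emph{Nonlinear part.} The Duhamel term is bounded by \eqref{Strichartz-inhomo}, or by the exotic estimate \eqref{exotic-Strichartz-inhomo} for the $\dot H^{\frac d2-\frac2\alpha}$-admissible norm. Using the pointwise bound \eqref{local-est1} and the Moser estimate for $H^{1/2}$-type derivatives in $y$, we aim for
\[
\big\|\,|u|^\alpha u\,\big\|_{\text{dual}}\lesssim T^\theta\,\|u\|_{L_t^{a}L_x^{b}L_y^\infty}^{\alpha}\,\|Du\|_{L_t^pL_x^qL_y^2},
\]
where $D\in\{1,\nabla_x,A_j,H^{1/2}\}$. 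The $L_y^\infty$ factor is bounded by $\mathcal H_y^s$ and then by Sobolev embedding in $x$. The factor $T^\theta$ with $\theta>0$ comes from H\"older in time, because the problem is energy-subcritical ($\alpha<4/(d-1)$).

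\emph{Contraction.} We contract in the weaker norm $L_t^pL_x^qL_y^2$, where only $\big||u|^\alpha u-|v|^\alpha v\big|\lesssim(|u|^\alpha+|v|^\alpha)|u-v|$ is needed, so no Lipschitz bound on $|u|^\alpha$ is required when $\alpha<1$. For $T$ small depending on $\|u_0\|_\Sigma$, $\Phi$ maps $X_T$ to itself and is a contraction. Continuity in $\Sigma$ and uniqueness follow as usual.

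\emph{Blow-up alternative.} $T$ depends only on $\|u_0\|_\Sigma$. Mass conservation together with the fact that $\|yu(t)\|_{L^2}$ is controlled by energy and $\|\nabla_zu\|$ through Gagliardo–Nirenberg means a finite $T_{max}$ forces $\|\nabla_zu\|_{L^2}\to\infty$.

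\emph{Small data.} If $\|u_0\|_\Sigma<\eta$, we run the same estimates on $\mathbb R$ without the $T^\theta$ gain, using only scale-correct, i.e. $\dot H^{\frac d2-\frac2\alpha}$-critical, exponents in $x$. Here the global dispersion of Lemma \ref{dispersive} and the exotic Strichartz estimate supply global bounds. Smallness closes the bootstrap, and the Duhamel integral converges in $\Sigma$, which gives scattering.

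The main obstacle is choosing exponents $(q,r)$, $(q_1,r_1)$ in \eqref{exotic-cond-1}--\eqref{exotic-cond-2} that are $\dot H^{\frac d2-\frac2\alpha}$-critical in $x$ while the nonlinear estimate closes with only $\mathcal H_y^s$, $s\in(\tfrac12,1]$, in $y$ across the whole range \eqref{assumption} for $d\ge3$. This is where the non-smoothness for small $\alpha$ must be absorbed by the fractional Moser estimate rather than by differentiating the nonlinearity twice.
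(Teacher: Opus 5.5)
Your outline is correct. It is the standard Kato-type argument: Strichartz estimates in $L_t^pL_x^qL_y^2$ for $u$, $\nabla_x u$ and the commuting vector fields $A_1(t),A_2(t)$; the pointwise bound $|A_j(t)(|u|^\alpha u)|\lesssim |u|^\alpha|A_j(t)u|$; the embedding $\mathcal H_y^s\hookrightarrow L_y^\infty$ for $s>\frac12$ followed by Sobolev in $x$; a factor $T^\theta$ coming from energy-subcriticality; contraction in the weak $L_y^2$-based metric; and the blow-up alternative via energy conservation plus Gagliardo--Nirenberg to control $\|yu\|_{L^2}$.

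The paper gives no proof of this lemma and simply defers to Ardila--Carles; the same ingredients appear in its own local control lemma, so your proposal follows essentially the same route.
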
 
\vskip0.08in
We will then present a local control result, after first establishing a fractional chain rule for the harmonic oscillator.
 
\begin{lemma}[Fractional chain rule]Let $\al>0$ and $s\in[0,1]$, we have the following Moser type estimate
\begin{align}\label{fractional-chain}
    \big\||u|^\al u\big\|_{\mathcal{H}_y^s}\lesssim\|u\|_{\mathcal{H}_y^s}\|u\|_{L_y^\infty}^\al.
\end{align}
\end{lemma}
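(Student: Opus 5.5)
The estimate is one-dimensional in $y$, so I would prove it for a function $u=u(y)$ on $\R$ (with $x$ frozen) by interpolating between the endpoints $s=0$ and $s=1$. The tool is the norm equivalence $\|f\|_{\mathcal{H}_y^s}\simeq \|\langle\partial_y\rangle^s f\|_{L_y^2}+\|y^s f\|_{L_y^2}$ recalled above from \cite{equi}. It lets us treat the derivative part and the weight part separately. Set $F(u)=|u|^\alpha u$.

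For the weight part, the estimate is pointwise: $|y|^s|F(u)|=|u|^\alpha\,|y|^s|u|$. Hence
\[
\|y^sF(u)\|_{L_y^2}\le \|u\|_{L_y^\infty}^\alpha\|y^su\|_{L_y^2}\lesssim \|u\|_{L^\infty_y}^\alpha\|u\|_{\mathcal{H}_y^s}.
\]
This works for every $s\in[0,1]$ and needs no interpolation. Similarly, for $s=0$ the whole estimate is trivial.

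For the derivative part I distinguish two regimes. If $s=1$, the chain rule gives $|\partial_yF(u)|\lesssim |u|^\alpha|\partial_yu|$ almost everywhere; this holds for any $\alpha>0$, since $F\in C^1(\C)$ with $|F'(u)|\lesssim|u|^\alpha$. Therefore $\|\partial_y F(u)\|_{L^2_y}\lesssim \|u\|_{L^\infty}^\alpha\|\partial_y u\|_{L^2}$. For $0<s<1$ I would avoid Christ–Weinstein style fractional chain rules, which would put $u$ in some $L^p$ rather than $L^\infty$. Instead I would use the difference characterization of the homogeneous Sobolev norm:
\[
\||\partial_y|^s g\|_{L^2}^2\simeq \iint \frac{|g(y+h)-g(y)|^2}{|h|^{1+2s}}\,dy\,dh .
\]
Combined with the pointwise bound $|F(a)-F(b)|\lesssim(|a|^\alpha+|b|^\alpha)|a-b|\le 2\|u\|_{L^\infty}^\alpha|a-b|$ for $a=u(y+h)$, $b=u(y)$, this immediately gives $\||\partial_y|^sF(u)\|_{L^2}\lesssim\|u\|_{L^\infty}^\alpha\||\partial_y|^su\|_{L^2}$. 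The low-frequency part is controlled by $\|F(u)\|_{L^2}\le\|u\|_{L^\infty}^\alpha\|u\|_{L^2}$. This route handles every $s\in(0,1)$ and every $\alpha>0$ directly, so no nonlinear interpolation is needed.

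Adding the derivative and weight parts and invoking the equivalence with $\mathcal{H}_y^s$ yields \eqref{fractional-chain}. The main point to be careful about is justifying the norm equivalence for noninteger $s$. That is exactly the result quoted from \cite{equi}, so I will take it as given. The other point is the Lipschitz-type bound for $F$ when $0<\alpha<1$. Even though $F'$ is only Hölder continuous in that range, the mean value inequality still gives $|F(a)-F(b)|\lesssim \max(|a|,|b|)^\alpha|a-b|$, which is all we need.
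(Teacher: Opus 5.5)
Your proof is correct, and for $0<s<1$ it takes a different route from the paper.

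\textbf{What the paper does.} It checks only the two endpoints. At $s=0$ it uses H\"older. At $s=1$ it uses the pointwise bound $|A_j(t)(|u|^\al u)|\lesssim|u|^\al|A_j(t)u|$ together with the identity $|A_1f|^2+|A_2f|^2=|yf|^2+|\partial_yf|^2$; this is the same content as your chain rule plus pointwise weight bound. It then asserts that the range $0<s<1$ follows ``by interpolation''.

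\textbf{Why that step is incomplete.} The map $u\mapsto|u|^\al u$ is nonlinear, and the constant involves $\|u\|_{L^\infty_y}^\al$. So the step is not covered by Riesz--Thorin or complex interpolation. Making it rigorous would require a nonlinear interpolation theorem of Tartar or Bona--Scott type, with an $L^2$-Lipschitz estimate whose constant depends on $L^\infty$ norms, followed by a homogeneity argument to recover the exact power $\al$. The paper does not supply this.

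\textbf{What you do instead.} You use the $p=2$ equivalence $\|f\|_{\mathcal{H}^s_y}\simeq\|\langle\partial_y\rangle^sf\|_{L^2_y}+\||y|^sf\|_{L^2_y}$, which the paper also quotes in Section 2. This lets you split off the weight, which is handled pointwise for every $s$. You then treat the $\dot H^s(\R)$ part for $0<s<1$ through the Gagliardo difference norm combined with $|F(a)-F(b)|\lesssim\max(|a|,|b|)^\al|a-b|$. This argument is direct and valid for every $\al>0$, and it needs neither $s>1/2$ nor any embedding. Its only external input is the fractional norm equivalence.

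\textbf{Trade-off.} The paper's version is shorter and stays inside the $A_j(t)$ vector-field framework used in the local theory, but as written it is a sketch, and your argument supplies exactly the missing intermediate-$s$ step.
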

\begin{proof}
    For $s\in[0,1]$, it suffices to show the two endpoints. When $s=0$, it is a direct consequence of H\"older's inequality. When $s=1$, one should use the pointwise bound \eqref{local-est1} and H\"older. Then by interpolation, we have proved the desired estimate.
\end{proof}
\begin{lemma} [Local control result]
    Let $u$ be a global energy solution to \eqref{PHNLS} with $\|u\|_{L_t^\infty \Sigma}<\infty$ and $s\in(\frac12,1-s_c)$ where $s_c=\frac{d}{2}-\frac{2}{\al}$. Then for any admissible pairs $(p,q)$ with $p<\infty$ and compact interval $I$ with length $|I|$, it holds
    \begin{align*}
        \big\|u\big\|_{L_t^p(I,W_x^{1-s,q}\mathcal{H}_y^s)}\lesssim\langle |I|\rangle^{\frac{1}{p}}.
    \end{align*}
\end{lemma}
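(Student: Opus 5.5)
We argue by a standard bootstrap on short time intervals. Since $\|u\|_{L_t^\infty\Sigma}<\infty$, the quantities $\|\nabla_z u(t)\|_{L^2}$, $\|yu(t)\|_{L^2}$ and $\|u(t)\|_{L^2}$ are uniformly bounded. The key observation is that the interpolated norm $\|u(t)\|_{H_x^{1-s}\mathcal{H}_y^{s}}$ is also uniformly bounded. Indeed, on the Fourier--Hermite side, $\langle\xi\rangle^{2(1-s)}(2n+1)^{s}\lesssim \langle\xi\rangle^2+(2n+1)$ by Young's inequality, and $\langle\xi\rangle^2+(2n+1)$ is controlled by the $\Sigma$-norm through the equivalence $\|f\|_{\mathcal{H}^1_y}^2=\sum(2n+1)\|\Pi_nf\|^2\sim\|\partial_yf\|^2+\|yf\|^2+\|f\|^2$.

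The plan is to split $I$ into $O(\langle|I|\rangle)$ subintervals $I_j$ of a fixed length $\tau$, where $\tau$ depends only on $\|u\|_{L_t^\infty\Sigma}$. On each $I_j=[t_j,t_j+\tau]$ we prove $\|u\|_{L_t^p(I_j,W_x^{1-s,q}\mathcal{H}_y^s)}\lesssim 1$. Summing the $p$-th powers then gives $\|u\|^p_{L^p_t(I)}\lesssim \langle|I|\rangle$, which is the claimed bound with exponent $1/p$.

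On a single $I_j$ we use Duhamel's formula from $t_j$ together with the Strichartz estimates of Proposition \ref{Strichartz-Antonelli}. The estimate \eqref{Stricharz-Sobolev} holds with $\sigma=s$ and commutes with $\langle\nabla_x\rangle^{1-s}$, since the propagator commutes with $x$-Fourier multipliers. This gives, for the norm $Z(I_j):=\sup_{(p,q)\ \mathrm{admissible}}\|\langle\nabla_x\rangle^{1-s}u\|_{L^p_tL^q_x\mathcal{H}^s_y(I_j)}$,
\begin{align*}
Z(I_j)\lesssim \|u(t_j)\|_{H_x^{1-s}\mathcal{H}_y^s}+\big\|\langle\nabla_x\rangle^{1-s}(|u|^\alpha u)\big\|_{L_t^{\tilde p'}L_x^{\tilde q'}\mathcal{H}^s_y(I_j)}.
\end{align*}
To bound the nonlinear term we combine three ingredients:
\begin{itemize}
\item the fractional chain rule \eqref{fractional-chain} in $y$, which costs $\|u\|_{L^\infty_y}^\alpha$;
\item the fact that $\mathcal{H}^s_y\hookrightarrow L^\infty_y$ because $s>\tfrac12$, so that $\|u\|_{L_y^\infty}\lesssim\|u\|_{\mathcal{H}^s_y}$;
\item a fractional Leibniz/chain rule in $x$ of order $1-s\in(0,1)$, applied to the $\mathcal{H}^s_y$-valued function (or equivalently at the level of the pointwise bound, using the Moser estimate to handle mixed terms).
\end{itemize}
Together these give a bound of the form $\|\langle\nabla_x\rangle^{1-s}u\|_{L^{q_1}_x\mathcal{H}_y^s}\|u\|^\alpha_{L^{q_2}_x\mathcal{H}^s_y}$. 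The factor $\|u\|_{L^{q_2}_x\mathcal{H}^s_y}$ is controlled by Sobolev embedding in $x$ from $W_x^{1-s,q}\mathcal{H}^s_y$. This requires the gap condition $s<1-s_c$: the available $x$-regularity $1-s$ exceeds $s_c$, which is what allows a subcritical choice of exponents. Consequently the time exponent in H\"older can be taken with strictly positive room. The result is
\begin{align*}
Z(I_j)\lesssim C(\|u\|_{L^\infty\Sigma})+\tau^{\theta}Z(I_j)^{1+\alpha},\qquad \theta>0.
\end{align*}
A continuity argument in the length of the interval then yields $Z(I_j)\lesssim1$ once $\tau$ is chosen small in terms of $\|u\|_{L^\infty_t\Sigma}$.

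The main obstacle is the exponent bookkeeping in the nonlinear estimate. It requires finding admissible $(\tilde p,\tilde q)$ and H\"older exponents $q_1,q_2$ that are compatible with the Sobolev embedding in $x$ and leave a positive power $\tau^\theta$, uniformly over the whole range \eqref{assumption} with $d\ge3$. A further difficulty is that $\alpha$ may be less than $1$, so the fractional chain rule in $x$ of order $1-s<1$ must be used in its H\"older-continuous form. If the direct bookkeeping fails near the endpoints, we would first attempt to interpolate with the $L^\infty_t\Sigma$ bound itself to absorb part of the nonlinearity without using Strichartz norms.
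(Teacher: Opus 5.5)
\textbf{There is a genuine gap, in the nonlinear estimate.} Your time-slicing, the Strichartz setup and the summation of $p$-th powers are fine. Your bound $\|u\|_{H^{1-s}_x\mathcal{H}^s_y}\lesssim\|u\|_\Sigma$ via Young's inequality is exactly a fact the paper needs.

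The problem is the nonlinear estimate, which you list but never prove. You must place $\langle\nabla_x\rangle^{1-s}$ and $\mathcal{H}^s_y$ on $G(u)=|u|^\alpha u$ at the same time.
\begin{itemize}
\item The $y$-chain rule \eqref{fractional-chain} holds at each fixed $x$, so it says nothing about $x$-regularity.
\item Running a fractional chain rule in $x$ for the $\mathcal{H}^s_y$-valued map $x\mapsto u(x,\cdot)$ needs control of $x$-differences, $\|G(u(x))-G(u(x'))\|_{\mathcal{H}^s_y}\lesssim (a(x)+a(x'))\|u(x)-u(x')\|_{\mathcal{H}^s_y}$. That is, it needs $G$ to be locally Lipschitz on $\mathcal{H}^s_y$.
\end{itemize}

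For $\alpha<1$ this fails for every $s\in(\frac12,1)$. Note that $\alpha<1$ is forced for all $d\geq5$, since $\alpha<4/(d-1)\le 1$. To see the failure, take $v=|y|^{\gamma}\chi(y)$ with $\chi$ a bump equal to $1$ near $0$ and $s-\frac12<\gamma<(s-\frac12)/\alpha$. Then $v\in\mathcal{H}^s_y$. However, the linearization $w\mapsto(\alpha+1)|v|^\alpha w$ does not map $\mathcal{H}^s_y$ into itself, because $|y|^{\alpha\gamma}\notin H^s_{\mathrm{loc}}$ when $\alpha\gamma<s-\frac12$.

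Neither fallback you mention closes this. The H\"older form of the scalar chain rule does not address it. The Moser estimate in Section 2 bounds products, not compositions. This is precisely the anisotropic obstruction the introduction describes as blocking the Tao--Visan fractional-calculus approach. Your route is plausible for $d=3,4$, where $\alpha>1$ and the Lipschitz property holds, but not in the range where the lemma is actually new.

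\textbf{How the paper avoids this.} It never puts fractional derivatives on the nonlinearity.
\begin{itemize}
\item It bootstraps the integer-order Strichartz norms $\|u\|_{L^p_tW^{1,q}_xL^2_y}+\|u\|_{L^p_tL^q_x\mathcal{H}^1_y}$, plus the same norms at an auxiliary admissible pair $(p_1,q_1)$.
\item It uses $\nabla_x$ and the operators $A_j(t)$, which commute with the linear flow and obey the pointwise bound \eqref{local-est1}. Hence $|\nabla_xG(u)|+|A_j(t)G(u)|\lesssim|u|^\alpha(|\nabla_xu|+|A_j(t)u|)$, with no regularity of $G'$ required.
\item The factor $|u|^\alpha$ goes into $L^\infty_tL^{r_1}_xL^\infty_y$. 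It is bounded by $\|u\|_{L^\infty_t\Sigma}$ through $H^{1-s}_x\mathcal{H}^s_y\hookrightarrow L^{r_1}_xL^\infty_y$. This is where $s>\frac12$ and $s<1-s_c$ enter: the latter gives $d/r_1<2/\alpha$, which leaves a positive power of $|I|$.
\item The resulting bootstrap inequality $X\lesssim 1+|I|^{\alpha/b_1}X$ is linear, so no superlinear continuity argument is needed.
\item Only at the end is the $W^{1-s,q}_x\mathcal{H}^s_y$ norm recovered, by interpolating $u$ itself (a linear operation) between $W^{1,q}_xL^2_y$ and $L^q_x\mathcal{H}^1_y$.
\end{itemize}

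You might try to repair your route by interpolating $G(u)$ the same way. That puts full-derivative norms of $u$ on the right-hand side, which forces you to bootstrap those norms anyway, bringing you back to the paper's argument.
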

\begin{proof}
    First,  we define the norm
    \begin{align*}
        X=\|u\|_{L_t^pW_{x}^{1,q}L_y^2}+\|u\|_{L_t^pL_x^q\mathcal{ H}_y^1}+\|u\|_{L_t^{p_1}W_{x}^{1,q_1}L_y^2}+\|u\|_{L_t^{p_1}L_{x}^{q_1}\mathcal H_y^1}.
    \end{align*}
 Now, we turn to give the control of the norm $X$. Let $r_1>\frac{\al}{d}$ be  such that $H_x^{1-s}\hookrightarrow L_x^{r_1}$, which is implied by $s<1-s_c$. Suppose that $b_1$ satisfies $\frac{2}{b_1}+\frac{d}{r_1}=\frac{2}{\al}.$ Then there exists an admissible pair $(p_1,q_1)$ such that 
\begin{align*}
    \frac{d+2}{2d}=\frac{\al}{r_1}+\frac{1}{q_1},\,\,\frac{1}{2}=\frac{\al}{b_1}+\frac{1}{p_1}.
\end{align*}
Then by Strichartz estimate, H\"older's inequality, Sobolev embedding $H_y^s\hookrightarrow L^\infty$ and the properties of $A_j(t)$, we have
\begin{align*}
    X&\lesssim1+\big\|A_j(t)(|u|^\al u)\big\|_{L_t^2(I,L_x^{\frac{2d}{d+2}}L_y^2 )}\\
    &\lesssim 1+\big\|\|u\|_{L_x^{r_1}\mathcal{H}_y^s}^\alpha\|A_j(t)u\|_{L_x^{q_1}L_y^2}\big\|_{L_t^2(I)}\\
    &\lesssim1+|I|^\frac{\al}{b_1}\|u\|_{L_t^\infty (I,L_x^{r_1}\mathcal{H}_y^s )}^\alpha\|A_j(t)u\|_{L_t^{p_1}(I, L_x^{q_1}L_y^2 )}\\
    &\lesssim1+|I|^\frac{\al}{b_1}\|u\|_{L_t^\infty (I, H_x^{1-s}\mathcal{H}_y^s )}^\alpha\cdot X\lesssim 1+|I|^\frac{\al}{b_1}X.
\end{align*}Notice that we only treat the terms involving $\mathcal{H}_y^1$ norm in $X$, others are more easier to treat by repeating the estimate above. Following the  continuity method, Proposition 2.3 in Carles-Gallo \cite{Carles-Gallo-JMP} and the property of $A_j(t)$,
we have the expected estimate. Denote by $H=-\Delta+|x|^2$ with $x\in\R^d$, we then have $$[L^2,D(H^\frac12)]_{\theta}=D(H^\frac{\theta}{2})=\big\{u\in H^s(\R^d):|x|^su\in L^2(\R^d)\big\}.$$
Combining with the abstract vector-valued interpolation space theorem (cf. Theorem 3.1 in \cite{Amann}) and the equivalence of Sobolev norms (cf. \cite{equi}), we have the following interpolation estimate
\begin{align*}
    \|u\|_{W_x^{1-s,r}\mathcal{H}_y^s}\lesssim\|f\|_{L_x^r\Sigma}^s\|f\|_{W_x^{1,r}L_y^2}^{1-s}
\end{align*}
for $r\in(1,\infty)$. Therefore, we have the desired estimate.

Hence, we complete the proof of this lemma. 
\end{proof}
\section{Scattering criterion}
In this section, we will establish the scattering criterion for \eqref{PHNLS}. This type of scattering criterion was initially developed by Dodson and Murphy in \cite{DM-MRL}. In our setting, the lack of dispersion effect in $y$-direction requires us to establish such criterion with the anisotropic norm.  
\begin{lemma}[Scattering criterion]\label{scattering-cri}
Let $u$ be a global solution to \eqref{PHNLS} such that $$\|u\|_{L_t^\infty \Sigma}\leq B$$ with some $B<\infty$. Then for all $\sigma>0$, there exists $\varepsilon(\sigma,B)>0$ sufficiently small and $T_0(\sigma,\varepsilon,B)$ sufficiently large  so that if for all $a\in\R$ there exists $T\in(a,a+T_0)$ such that  $[T-\varepsilon^{-\sigma},T]\subset(a,a+T_0)$ and 
\begin{align*}
    \|u\|_{L_t^q([T-\varepsilon^{-\sigma},T], L_x^{r}\mathcal{H}_y^s)}\lesssim\varepsilon,
\end{align*}
where $(q,r)=(\frac{2\al(\al+2)}{{2\al+4-d\al}},\al+2)$ is an admissible pair of $H^{s_c}$-level with $s_c=\frac{d}{2}-\frac{2}{\al}$ and $s=1-s_c$,
then $u$ scatters in forward time direction.

\end{lemma}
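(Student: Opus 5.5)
Our plan follows the Dodson--Murphy scattering criterion, run in the anisotropic norm $L_t^qL_x^r\mathcal{H}_y^s$. Since $s=1-s_c>\frac12$, we have $\mathcal{H}_y^s\hookrightarrow L_y^\infty$. Together with the fractional chain rule \eqref{fractional-chain}, this gives the basic nonlinear estimate
\[
\big\||u|^\alpha u\big\|_{L_x^{r'}\mathcal{H}_y^s}\lesssim \|u\|_{L_x^r\mathcal{H}_y^s}^{\alpha+1}.
\]
By the standard small-data argument built on the Strichartz estimates of Proposition \ref{Strichartz-Antonelli}, together with the exotic estimate \eqref{exotic-Strichartz-inhomo} to recover the $H^{s_c}$-level pair $(q,r)$, it then suffices to show the following: for $T$ large, $\|e^{i(t-T)(\Delta_z-y^2)}u(T)\|_{L_t^q([T,\infty),L_x^r\mathcal{H}_y^s)}\lesssim \varepsilon^{c}$ for some $c>0$. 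A continuity argument then gives $\|u\|_{L_t^q([T,\infty),L_x^r\mathcal{H}_y^s)}\lesssim\varepsilon^c$. Finally, the uniform $\Sigma$ bound combined with the operators $A_j(t)$ (via the bound \eqref{local-est1}) upgrades convergence of $e^{-it(\Delta_z-y^2)}u(t)$ from $L^2$ to $\Sigma$.

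To obtain the smallness, we fix $a$ and $T$ as in the hypothesis and write, by Duhamel,
\[
e^{i(t-T)(\Delta_z-y^2)}u(T)=e^{it(\Delta_z-y^2)}u_0+i\int_0^{T-\varepsilon^{-\sigma}}e^{i(t-s)(\Delta_z-y^2)}(|u|^\alpha u)\,ds+i\int_{T-\varepsilon^{-\sigma}}^{T}e^{i(t-s)(\Delta_z-y^2)}(|u|^\alpha u)\,ds=:F_1+F_2+F_3 .
\]
We treat the three pieces as follows.
\begin{itemize}
\item[$F_1$:] This term is small on $[T,\infty)$ for $T$ large (equivalently $a$ large), by Strichartz and dominated convergence, since $u_0\in\Sigma$ controls $L_x^2\mathcal{H}_y^1$.
\item[$F_3$:] We use the exotic/inhomogeneous Strichartz estimate and the hypothesis $\|u\|_{L_t^q([T-\varepsilon^{-\sigma},T],L_x^r\mathcal{H}_y^s)}\lesssim\varepsilon$. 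We put $\alpha$ factors in this small norm and the remaining one in a local-control norm. The local control lemma gives growth at most $\langle\varepsilon^{-\sigma}\rangle^{1/p}$, which is beaten by $\varepsilon^\alpha$ once $\sigma$ is small, yielding $\varepsilon^{c}$.
\item[$F_2$:] We interpolate between two bounds. The $L^2$-type bound comes from $F_2=e^{i(t-T+\varepsilon^{-\sigma})(\Delta_z-y^2)}u(T-\varepsilon^{-\sigma})-e^{it(\Delta_z-y^2)}u_0$ and Strichartz, so $\|F_2\|_{L_t^{q_0}L_x^{r_0}\mathcal{H}_y^{s}}\lesssim B$. The decay bound is an $L_x^\infty\mathcal{H}_y^s$ estimate via Lemma \ref{dispersive}:
\[
\|F_2(t)\|_{L_x^\infty \mathcal{H}_y^s}\lesssim\int_0^{T-\varepsilon^{-\sigma}}|t-s|^{-d/2}\big\||u|^\alpha u\big\|_{L_x^1\mathcal{H}_y^s}\,ds\lesssim B^{\alpha+1}\,(t-T+\varepsilon^{-\sigma})^{1-d/2}.
\]
Here the dispersive estimate commutes with $H^{s/2}$ in $y$, and $L_x^1\mathcal{H}_y^s$ is bounded by $\Sigma$ norms when $\alpha$ is large enough; otherwise we use an $L_x^{p'}\to L_x^p$ version. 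Since $d\ge3$, the time integral converges. Integrating the resulting power in $L_t^{q}$ over $[T,\infty)$ gives $\varepsilon^{\sigma\beta}$ for some $\beta>0$, and interpolating the two bounds gives smallness.
\end{itemize}

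We expect the main obstacle to be the $F_2$ estimate. The difficulty is making the dispersive $L_x^1\mathcal{H}_y^s\to L_x^\infty\mathcal{H}_y^s$ bound compatible with the fractional Hermite regularity: we need $\||u|^\alpha u\|_{L_x^1\mathcal{H}_y^s}$ controlled uniformly by $B$, and then we need exponents such that interpolation lands exactly at $(q,r)$. We would first try to use Sobolev in $x$ together with \eqref{fractional-chain} and the range \eqref{assumption}, falling back on a dispersive estimate from $L_x^{r'}$ to $L_x^r$ if the $L^1$ endpoint fails. Finally, we choose $\varepsilon$ small depending on $B$ and $\sigma$, and $T_0$ so large that $F_1$ is negligible.
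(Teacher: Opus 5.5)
Your outline is the paper's proof. It uses the same reduction, via the small-data theory, to smallness of $e^{i(t-T)(\Delta_z-y^2)}u(T)$ in $L_t^qL_x^r\mathcal{H}_y^s([T,\infty))$, and the same three-piece Duhamel split. The linear piece is handled by the tail of a finite Strichartz norm. The distant past is handled by interpolating in $x$ between the Strichartz bound from the identity for $F_2$ and the dispersive decay $(t-T+\varepsilon^{-\sigma})^{1-d/2}$.

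\textbf{Gap: the recent-past term $F_3$.} The lemma is asserted for \emph{every} $\sigma>0$, with $\varepsilon=\varepsilon(\sigma,B)$. Your bound $\varepsilon^{\alpha}\langle\varepsilon^{-\sigma}\rangle^{1/p}$ is small only if $\sigma<\alpha p$; for larger $\sigma$, shrinking $\varepsilon$ makes it worse. Also, the local-control lemma is stated only for $s\in(\frac12,1-s_c)$, so as stated it cannot supply a factor measured in $\mathcal{H}_y^{1-s_c}$.

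The paper pays nothing for the interval length. Define $(\tilde q,\tilde r)$ by $(\alpha+1)\tilde q'=q$ and $(\alpha+1)\tilde r'=r$. Then $\tilde r=\alpha+2$ and $\frac{2}{\tilde q}+\frac{d}{\tilde r}=d-\frac{2}{\alpha}$, so $(\tilde q,\tilde r)$ is $\dot H^{-s_c}$-admissible. The exotic inhomogeneous estimate applies, and its conditions follow from $\frac4d<\alpha<\frac{4}{d-2}$. Your own nonlinear estimate, plus H\"older in $t$, then gives
\[
\|F_3\|_{L_t^q([T,\infty),L_x^r\mathcal{H}_y^s)}\lesssim\big\||u|^{\alpha}u\big\|_{L_t^{\tilde q'}([T-\varepsilon^{-\sigma},T],L_x^{\tilde r'}\mathcal{H}_y^s)}\lesssim\|u\|_{L_t^q([T-\varepsilon^{-\sigma},T],L_x^r\mathcal{H}_y^s)}^{\alpha+1}\lesssim\varepsilon^{\alpha+1}.
\]
This bound is uniform in $\sigma$, and it is the argument you should use.

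\textbf{On $F_2$.} Your caution is justified, and here you are more careful than the paper. Its chain $\||u|^\alpha u\|_{L_x^1\mathcal{H}_y^s}\lesssim\|u\|_{L_x^{\alpha+1}\mathcal{H}_y^s}^{\alpha+1}\lesssim\|u\|_{\Sigma}^{\alpha+1}$ uses $H_x^{s_1}\hookrightarrow L_x^{\alpha+1}$. That embedding needs $\alpha\ge1$, so the step fails for every $d\ge5$: there $\alpha<\frac{4}{d-1}\le1$, and $\Sigma$ carries no $x$-weight.

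Your fallback repairs this. Take $p'=\frac{2}{\alpha+1}$ and $p=\frac{2}{1-\alpha}$. Then
\[
\big\||u|^\alpha u\big\|_{L_x^{p'}\mathcal{H}_y^s}\lesssim\|u\|_{L_x^2\mathcal{H}_y^s}\,\|u\|_{L_x^2L_y^\infty}^{\alpha}\lesssim B^{\alpha+1},
\]
and the $L_x^{p'}L_y^2\to L_x^pL_y^2$ decay rate is $\frac{d\alpha}{2}>2$. Let $(q,r_1)$ be the $L^2$-admissible pair with the same $q$; then $r_1<r=\alpha+2<p$. Interpolating between $L_t^qL_x^{r_1}\mathcal{H}_y^s$ (bounded by $B$) and $L_t^qL_x^{p}\mathcal{H}_y^s$ still gives the small factor $\varepsilon^{\sigma(\frac{d\alpha}{2}-1-\frac1q)}$.
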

\begin{proof}
By Lemma \ref{lwp}, it remains to prove that there exists $T>0$ sufficiently large such that the following estimate holds for some $\mu>0$:
\begin{align*}
    \big\|e^{
i(t-T)(\Delta_{x,y}-y^2)} u(T)\big\|_{L_t^qL_x^r\mathcal H_y^s([T,\infty)\times\R^{d+1})}\lesssim\varepsilon^\mu.
\end{align*}
By the standard Duhamel formula, one has
\begin{align*}
    e^{i(t-T)(\Delta_{x,y}-y^2)}u(T)=e^{it(\Delta_{x,y}-y^2)}u_0-i\int_0^Te^{i(t-\tau)(\Delta_{x,y}-y^2)}(|u|^\al u)(\tau)\,d\tau.
\end{align*}
For the linear part, by Strichartz estimate, we find that
\begin{align*}
 \big\|e^{it(\Delta_{x,y}-y^2)}u_0\big\|_{L_t^q L_x^r\mathcal H_y^s}\lesssim\|u_0\|_{\Sigma }.   
\end{align*}
By the continuity, there exists a $T\gg1$ sufficiently large such that 
\begin{align*}
\big\|e^{it(\Delta_{x,y}-y^2)}u_0\big\|_{L_t^q([T,\infty), L_x^r\mathcal H_y^s)}<\varepsilon,
\end{align*}
where $\varepsilon>0$ small enough. Then splitting  the time interval, we rewrite
\begin{align*}
&\hspace{3ex}\int_{0}^Te^{i(t-\tau)(\Delta_{x,y}-y^2)}(|u|^{\al}u)(\tau)\,d\tau\\&=\int_0^{T-\varepsilon^{-\sigma}}e^{i(t-\tau)(\Delta_{x,y}-y^2)}(|u|^{\al}u)(\tau)\,d\tau+\int_{T-\varepsilon^{-\sigma}}^Te^{i(t-\tau)(\Delta_{x,y}-y^2)}(|u|^{\al}u)(\tau)\,d\tau\\
&=:G_1(t)+G_2(t).
\end{align*}
From the Strichartz estimate and \eqref{fractional-chain}, one has
\begin{align*}
    \big\|G_2(t)\big\|_{L_t^q([T,\infty),L_x^r\mathcal{ H}_y^s )}&\lesssim\||u|^{\al}u\|_{L_t^{q_0^\prime}([T-\varepsilon^{-\sigma},T), L_x^{r_0^\prime}\mathcal H_y^s)}\lesssim\|u\|_{L_t^q([T-\varepsilon^{-\sigma},T), L_x^r\mathcal H_y^s)}^{\al+1}\lesssim\varepsilon^{\al+1},
\end{align*}
where $(q_0,r_0)$ is a $H_x^{s_c}(\R^d)$-admissible pairs and satisfies $q_0^\prime(\al+1)=q$ and $r_0^\prime (\al+1)=r$. Then it suffices to estimate the term $G_1(t)$. By interpolation inequality in $x$-direction, one has
\begin{align*}
\|G_1\|_{L_t^qL_x^r\mathcal H_y^s}\leq C\|G_1\|_{L_t^qL_x^{r_1}\mathcal H_y^s}^\frac{r_1}{r}\|G_1\|_{L_t^qL_x^{\infty}\mathcal H_y^s}^{1-\frac{r_1}{r}},
\end{align*}
where $2\leq r_1<\al+2$ and $r_1=\frac{2d\al(\al+2)}{d\al^2+4(d-1)\al-8}$. Hence, $(q,r_1)$ is a $L^2$ admissible pair. 
By Duhamel's formula, we can write
\begin{align*}
    iG_1(t)=e^{i(t-T+\varepsilon^{-\sigma})(\Delta_{x,y}-y^2)}u(T-\varepsilon^{-\sigma})-e^{it(\Delta_{x,y}-y^2)}u_0.
\end{align*}
Using the Strichartz estimate and the uniform bound of $u$ in energy space, we have
\begin{align*}
\|G_1(t)\|_{L_t^qL_x^{r_1}\mathcal{H}_y^s }\lesssim1.
\end{align*}
Next, we proceed on estimating $L_t^qL_x^\infty\mathcal H_y^s$ norm of $G_1(t)$.
By dispersive estimate (Lemma \ref{dispersive}), Strichartz estimate, Moser type estimate, and the Duhamel formula, we have
	\begin{align*}
&\hspace{4ex}\|G_1(t)\|_{ L_x^{\infty}\mathcal{H}_y^s }\\&\lesssim\|G_1\|_{L_x^{\infty}L_y^2}+\big\|H_y^\frac{s}{2}G_1(t)\big\|_{L_x^{\infty}L_y^2}\lesssim\|G_1(t)\|_{L_y^2L_x^{\infty} }+\big\|H_y^\frac{s}{2}G_1(t)\big\|_{L_y^2L_x^{\infty} }\\
			&\lesssim\Big\|\int_0^{T-\varepsilon^{-\sigma}}\big|t-\tau\big|^{-\frac{d}{2}}\big\|e^{i(t-\tau)(\Delta_{y}-y^2)}(|u|^\al u)(\tau)\big\|_{L_x^{1} }\,d\tau\Big\|_{L_y^2 }\\
&\hspace{2ex}+\sum_{j=1}^2\Big\|\int_{0}^{T-\varepsilon^{-\sigma}}\big|t-\tau\big|^{-\frac{d}{2}}\big\|e^{i(t-\tau)(\Delta_{y}-y^2)}H_y^\frac{s}{2}(|u|^\al u)(\tau)\big\|_{L_x^{1} }\,d\tau\Big\|_{L_y^2 }\\
			&\lesssim\int_0^{T-\varepsilon^{-\sigma}}\big|t-\tau\big|^{-\frac{d}{2}}\big\||u|^\alpha u(\tau)\big\|_{L_x^1L_y^2}\,d\tau +\int_{0}^{T-\varepsilon^{-\sigma}}\big|t-\tau\big|^{-\frac{d}{2}}\big\|H_y^\frac{s}{2}(|u|^\al u)(\tau)\big\|_{L_x^{1}L_y^2}\,d\tau\\
            &\lesssim \int_0^{T-\varepsilon^{-\sigma}}\big|t-\tau\big|^{-\frac{d}{2}}\big\|u(\tau)\big\|_{L_x^{\al+1}\mathcal H_y^s}^{\al+1}\,d\tau\\
			&\lesssim \int_0^{T-\varepsilon^{-\sigma}}\big|t-\tau\big|^{-\frac{d}{2}}\|u(\tau)\|_{L_y^2L_x^{
				\al+1} }^{\al+1}\,ds +\sum_{j=1}^2\int_0^{T-\varepsilon^{-\sigma}}\big|t-\tau\big|^{-\frac{d}{2}}\|H_y^\frac{s}{2}(u)\|_{L_y^2L_x^{\al+1} }^{\al+1},d\tau\\
			&\lesssim\int_0^{T-\varepsilon^{-\sigma}}\big|t-\tau\big|^{-\frac{d}{2}}\|u(\tau)\|_{H_x^{s_1}\mathcal{H}_y^{s_2} }^{\al+1}\,d\tau\lesssim\int_0^{T-\varepsilon^{-\sigma}}\big|t-\tau\big|^{-\frac{d}{2}}\|u(\tau)\|_{\Sigma }^{\al+1}\,d\tau\\
			&\lesssim(t-T+\varepsilon^{-\sigma})^{-\frac{d}{2}+1}.
		\end{align*}
        In the last step, we take $s_1=\frac{1}{2}-\eta$ and $s_2=\frac{1}{2}+\eta$ for some $\eta$ close to $0$. 
		Then taking the time integral, we obtain
		\begin{align*}
\big\|G_1(t)\big\|_{L_t^q([T,\infty),L_x^{\infty}\mathcal{ H}_y^1 )}\lesssim\Big(\int_{T}^\infty (t-T+\varepsilon^{-\sigma})^{(-\frac d2+1)q}\,dt\Big)^{\frac1q}\lesssim\varepsilon^{\sigma(\frac{d-2}{2}-\frac{1}{q})}.
		\end{align*}
		It remains to verify the inequality $\frac{d-2}{2}-\frac{1}{q}>0$ for $q=\frac{2\alpha(\alpha+2)}{2\alpha-d\alpha+4}$.
		By the basic calculation, it is equivalent to 
		\[
		(d-2)\alpha^2 + [2(d-2)\alpha - 2\alpha + d\alpha] - 4 = (d-2)\alpha^2 + 3(d-2)\alpha - 4>0.\]
		Then we define the function
$ f(\alpha) = (d-2)\alpha^2 + 3(d-2)\alpha - 4 $. It reduces to proving that when \( \alpha \in \left( \frac{4}{d}, \frac{4}{d-1} \right) \), \( f(\alpha) > 0 \).
		If  $ d \geq 3$, one can easily show that \( f(\alpha) \) is monotone increasing. Then we only need to  verify $f(\frac{4}{d})>0$, which is direct by  simple computation. Therefore, we complete the proof of this lemma. 
\end{proof}

\begin{remark}
   {\rm  It is worth noting that   the $\dot H^{\frac12}$-subcritical nonlinearity in $d$-dimension corresponds precisely to the $\dot H^1$-subcritical nonlinearity in $(d+1)$-dimension. Therefore, we will perform the $H^{\frac12-}$ criticality in $x$-variable while for the $y$ direction we perform the $H^{\frac12+}$ analysis. This is due to the dispersion effect in partial dimensions.  For the waveguide model, this was indicated by Tzvetkov and Visciglia \cite{TV2}.} 
\end{remark}

    \section{Variational analysis}\label{Section Variation}
The goal of this section is to establish some variational results   for  proving Theorem \ref{thm:alldimension}.	Throughout this section, we always assume that     $d\geq 2$  and that condition \eqref{assumption} holds.  
Consider   the   minimization problem 
\begin{equation}\label{minimiztaon problem}
 	m_\omega=\inf\lbr{S_{\omega}(\phi):\phi \in  K},  
     \end{equation}
     where 
     \begin{equation}
         K:=\lbr{\phi \in 	 \Sigma  \setminus \lbr{0}:  ~	Q(\phi)=0 }.
     \end{equation}
     We have the following.

 \begin{theorem}[Existence of  minimizer]\label{thm groundstate}    For any $\omega>0$, the  minimization problem	 \eqref{minimiztaon problem} admits a positive minimizer $\phi_\omega$,  which     solves equation  \eqref{equ elliptic}.  
 	\end{theorem}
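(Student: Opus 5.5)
The plan is to run the standard Pohozaev-constrained minimization adapted to the anisotropic scaling $\phi^\lambda(x,y)=\lambda^{d/2}\phi(\lambda x,y)$. This scaling preserves the mass and the quantities $\nm{\partial_y\phi}_{L^2}$, $\nm{y\phi}_{L^2}$. It scales $\nm{\nabla_x\phi}_{L^2}^2$ by $\lambda^2$ and $\nm{\phi}_{L^{\alpha+2}}^{\alpha+2}$ by $\lambda^{d\alpha/2}$. Consequently $\frac{d}{d\lambda}S_\omega(\phi^\lambda)|_{\lambda=1}=Q(\phi)$. Since $d\alpha/2>2$ by \eqref{assumption}, for every $\phi\neq0$ the map $\lambda\mapsto S_\omega(\phi^\lambda)$ has a unique critical point $\lambda_\phi$, which is a strict maximum, and $Q(\phi^\lambda)$ is positive for $\lambda<\lambda_\phi$ and negative for $\lambda>\lambda_\phi$. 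Hence $K\neq\emptyset$.

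On $K$ we have
\[
S_\omega(\phi)=S_\omega(\phi)-\tfrac{2}{d\alpha}Q(\phi)=\Big(\tfrac12-\tfrac{2}{d\alpha}\Big)\nm{\nabla_x\phi}^2_{L^2}+\tfrac12\nm{\partial_y\phi}^2_{L^2}+\tfrac12\nm{y\phi}^2_{L^2}+\tfrac\omega2\nm{\phi}^2_{L^2},
\]
which is coercive in $\Sigma$. To get $m_\omega>0$, I combine $Q=0$ with an anisotropic Gagliardo–Nirenberg inequality. Using $\alpha<4/(d-1)$, the energy-subcritical embedding in $d+1$ dimensions gives
\[
\nm{\phi}^{\alpha+2}_{L^{\alpha+2}}\lesssim \nm{\nabla_x\phi}^{d\alpha/2}\,\nm{\phi}_{\Sigma_y}^{\theta}\cdots,
\]
that is, $\nm{\nabla_x\phi}^2\lesssim \nm{\nabla_x\phi}^{d\alpha/2}C(\nm{\phi}_\Sigma)$. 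With $d\alpha/2>2$ this bounds $\nm{\phi}_\Sigma$ from below on $K$.

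Next I take a minimizing sequence $\phi_n\in K$. Replacing $\phi_n$ by $|\phi_n|$ keeps it in $K$ and does not increase $S_\omega$. Schwarz symmetrization in $x$ alone (for fixed $y$) also preserves $Q=0$ up to decreasing $\nm{\nabla_x\cdot}$; after rescaling back onto $K$ via $\lambda_\phi\le1$, the value of $S_\omega$ does not increase. The sequence is bounded in $\Sigma$. The $y$-confinement together with $x$-radiality gives compactness of $\Sigma_{rad,x}\hookrightarrow L^{\alpha+2}$. This uses the Strauss-type decay in $x$ (here $d\ge2$) plus compactness of $\nm{y\cdot}$ in $y$, an interpolation with the $d+1$-dimensional subcritical Sobolev bound. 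So $\phi_n\rightharpoonup\phi$ weakly in $\Sigma$ and strongly in $L^{\alpha+2}$.

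The lower bound on $\nm{\nabla_x\phi_n}^2=\frac{d\alpha}{2(\alpha+2)}\nm{\phi_n}^{\alpha+2}_{L^{\alpha+2}}$ shows $\phi\neq0$. By weak lower semicontinuity, $Q(\phi)\le0$. If $Q(\phi)<0$, then $\lambda_\phi<1$, and we compute
\[
S_\omega(\phi^{\lambda_\phi})=\Big(\tfrac12-\tfrac{2}{d\alpha}\Big)\lambda_\phi^2\nm{\nabla_x\phi}^2_{L^2}+\cdots<\liminf_n S_\omega(\phi_n)=m_\omega,
\]
a contradiction. So $Q(\phi)=0$ and $\phi$ is a nonnegative minimizer. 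Positivity will follow from the strong maximum principle once the equation is derived.

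The main obstacle is deriving the Euler–Lagrange equation, since $Q=0$ is not a natural constraint. Lagrange multipliers give $S_\omega'(\phi)=\mu Q'(\phi)$. To kill $\mu$, I test this identity against the generator of the scaling, i.e.\ $\frac{d}{d\lambda}\phi^\lambda|_{\lambda=1}$. The left side gives $\langle S_\omega'(\phi),\partial_\lambda\phi^\lambda\rangle=Q(\phi)=0$. The right side gives
\[
\mu\,\partial_\lambda Q(\phi^\lambda)|_{\lambda=1}=\mu\Big(2\nm{\nabla_x\phi}^2-\tfrac{d^2\alpha^2}{4(\alpha+2)}\nm{\phi}^{\alpha+2}_{L^{\alpha+2}}\Big)=\mu\Big(2-\tfrac{d\alpha}{2}\Big)\nm{\nabla_x\phi}^2_{L^2}.
\]
This quantity is nonzero unless $\mu=0$, because $d\alpha>4$. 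Making this test rigorous requires $\partial_\lambda\phi^\lambda=\frac d2\phi+x\cdot\nabla_x\phi$ to be admissible, which needs some regularity and decay of $\phi$. I would handle this via a Pohozaev-type identity for the equation $S_\omega'=\mu Q'$, justified by elliptic regularity and a cutoff argument. Alternatively, I would use the deformation argument: if $\mu\neq0$, perturb $\phi$ in the direction $-S_\omega'$ while re-projecting onto $K$ with $\lambda$, and reach a contradiction with minimality.
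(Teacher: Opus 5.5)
Your outline is correct, but both main steps take a different route from the paper.

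\textbf{Existence.} The paper does not symmetrize. It takes a nonnegative minimizing sequence in $K$ and uses the non-vanishing lemma of Bellazzini et al.\ to find translations in $x$ with a nonzero weak limit $\phi_\omega$. It rules out $Q(\phi_\omega)<0$ by fiber rescaling, exactly as you do. It rules out $Q(\phi_\omega)>0$ by a Br\'ezis--Lieb splitting of $\phi_n-\phi_\omega$, combined with $m_\omega=\inf\{I_\omega(\phi):Q(\phi)\le 0\}$.

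Your route is partial Schwarz symmetrization in $x$, followed by rescaling back onto $K$ with $\lambda\le1$. (The same rescaling is needed for $|\phi_n|$, which only satisfies $Q\le 0$.) Then the compact embedding of $x$-radial $\Sigma$ into $L^{\alpha+2}$ gives strong convergence. Hence $Q(\phi)\le\liminf Q(\phi_n)=0$ is automatic and the dichotomy case never arises. The costs are:
\begin{itemize}
\item the restriction $d\ge2$, which is assumed in that section anyway;
\item the fact that partial symmetrization does not increase $\|\partial_y\phi\|_2$. You need this for $I_\omega(\phi^*)\le I_\omega(\phi)$ and should state it; it follows from $L^2$-nonexpansivity of rearrangements applied to $y$-difference quotients.
\end{itemize}

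\textbf{Euler--Lagrange equation.} The paper avoids multipliers. It shows $S_\omega$ has a mountain-pass geometry at level $m_\omega$. Then, following Bellazzini--Jeanjean--Luo, it deforms the fiber path $s\mapsto\phi_\omega^{s\lambda_1+(1-s)\lambda_0}$ strictly below $m_\omega$ if $S_\omega'(\phi_\omega)\neq0$, which contradicts $m_\omega=\sigma_\omega$. That argument needs no regularity of $\phi_\omega$. Your multiplier-plus-Pohozaev argument is more elementary, but its rigor rests on elliptic regularity for the multiplier equation.

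Two checks are missing in that step.

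First, the multiplier rule needs $Q'(\phi)\neq0$. This holds on $K$, since there $\langle Q'(\phi),\phi\rangle=2\|\nabla_x\phi\|_2^2-\frac{\alpha d}{2}\|\phi\|_{\alpha+2}^{\alpha+2}=-\alpha\|\nabla_x\phi\|_2^2\neq0$.

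Second, and more importantly, the equation $S_\omega'(\phi)=\mu Q'(\phi)$ reads
\[
-(1-2\mu)\Delta_x\phi-\partial_y^2\phi+y^2\phi+\omega\phi=\Big(1-\frac{\mu\alpha d}{2}\Big)|\phi|^{\alpha}\phi .
\]
This is elliptic only when $1-2\mu>0$, so you cannot invoke elliptic regularity until $\mu\ge\frac12$ is excluded. Test the equation against $\phi$ itself, which is admissible in $\Sigma$, and use $Q(\phi)=0$:
\[
\|\partial_y\phi\|_2^2+\|y\phi\|_2^2+\omega\|\phi\|_2^2=\Big(\frac{2(\alpha+2)}{\alpha d}-1-\alpha\mu\Big)\|\nabla_x\phi\|_2^2 .
\]
Since the left side is positive and $\alpha d>4$, this gives $\alpha\mu<\frac{2(\alpha+2)}{\alpha d}-1<\frac{\alpha}{2}$, i.e.\ $\mu<\frac12$. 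The right-hand coefficient $1-\frac{\mu\alpha d}{2}$ is then positive too.

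After rescaling $x$ by $\sqrt{1-2\mu}$ you are back to the standard subcritical equation. Bootstrap gives $W^{2,p}_{\mathrm{loc}}$ regularity. The cut-off Pohozaev identity in $x$ then yields $(1-2\mu)\|\nabla_x\phi\|_2^2=\big(1-\frac{\mu\alpha d}{2}\big)\frac{\alpha d}{2(\alpha+2)}\|\phi\|_{\alpha+2}^{\alpha+2}$. Together with $Q(\phi)=0$, this is exactly your $\mu\big(\frac{\alpha d}{2}-2\big)=0$.
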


In the proof of Theorem \ref{thm groundstate}, the  constraint $Q(\phi)=0$ leads to some unexpected new challenges in the  analysis. First of all,  we point out that a crucial step in    proving the existence of minimizer  is   to  show that the weak limit of a minimizing sequence is  non-vanishing. Consider the    stationary equation of  standard NLS \eqref{sm2}   
\begin{equation} \label{sm3}
   -\Delta_x \phi+\omega \phi=|\phi|^{\alpha}\phi~~ \quad \text{in } \R^d  ,
\end{equation}
and the corresponding minimization problem
\begin{equation}\label{mq1}
    \inf \lbr{  \widetilde{S}_{\omega}(\phi) :  \phi \in H_x^1(\R^{d}) \setminus \lbr{0}, ~~ \widetilde{Q}(\phi)=0 },
\end{equation}
where $\widetilde{Q}(\phi)$ is given by \eqref{defi of widetilde Q}, and  $ \widetilde{S}_{\omega}(\phi)$  is   the corresponding  functional    defined by
\begin{equation}
        \widetilde{S}_{\omega}(\phi)=	\frac{1}{2}\nm{\nabla_x \phi }_{L_x^2(\R^{d})}^2+  \frac{\omega}{2}\nm{\phi }_{L_x^2(\R^{d})}^2 -\frac{1}{\alpha+2}\nm{ \phi}_{L_x^{\alpha+2}(\R^{d })}^{\alpha+2}.
\end{equation}
For the minimization problem \eqref{mq1}, the non-vanishing property of the weak limit of a minimizing sequence can be proved using the Gagliardo–Nirenberg inequality on $\R^{d}$. 
Indeed, let $\sbr{\phi_n}_n $ be a bounded minimizing sequence for \eqref{mq1}. Then, in view of  \eqref{defi of widetilde Q}, it follows that for each  $n\in \N$
\begin{equation}
    \nm{\nabla_x \phi_n}_{L_x^2(\R^{d})}^2 =\frac{\alpha d }{2\sbr{ \alpha+2} }\nm{ \phi_n}_{L_x^{\alpha+2}(\R^{d})}^{\alpha+2}\lesssim \nm{\nabla_x \phi_n}_{L_x^2(\R^{d})}^{\frac{\alpha d}{2}} \nm{\phi_n}_{L_x^2(\R^{d})}^{ \frac{2\alpha+4-\alpha d}{2}}\lesssim \nm{\nabla_x \phi_n}_{L_x^2(\R^{d})}^{\frac{\alpha d}{2}}.
\end{equation}
Since $\alpha d>4$, we obtain  that 
\begin{equation}
    \liminf_{n\to\infty} \nm{\nabla_x \phi_n}_{L_x^2(\R^{d})}^2 \sim   \liminf_{n\to\infty}\nm{ \phi_n}_{L_x^{\alpha+2}(\R^{d})}^{\alpha+2} >0,
\end{equation}
 which  ensures the weak limit is non-vanishing. However, this argument  fails for the minimization problem \eqref{minimiztaon problem} if one uses the      Gagliardo-Nirenberg inequality on  $\R^{d+1}$, because the  constraint $Q(\phi)=0$ does not involve the  term $\nm{\partial_y \phi_n}_{L_{x,y}^2(\R^{d+1})}^2 $.  To overcome this difficulty, we shall use the   anisotropic $(x,y)$-Gagliardo-Nirenberg inequality   for $(x,y)\in \R^d\times\R$
 \begin{equation}
		\nm{\phi}_{L_{x,y}^{\alpha+2}(\R^{d+1})}^{\alpha+2}\leq C(d,\alpha) \nm{ \nabla_x \phi}_{L_{x,y}^2(\R^{d+1})}^{\frac{\alpha d}{2}}\nm{\partial_y \phi}_{L_{x,y}^2(\R^{d+1})}^{\frac{\alpha}{2}}\nm{\phi}_{L_{x,y}^2(\R^{d+1})}^{ \frac{4-\alpha(d-1)}{2}},
		\end{equation}
see Lemma \ref{lemma GNinequality}.  
\vskip0.04in
Another difficulty is to show that the minimizer $\phi_\omega$ solves equation \eqref{equ elliptic} because the condition $Q(\phi)=0$ is not a natural constraint.  We will overcome this difficulty by showing that the functional $S_\omega$ admits a mountain pass structure at level $m_\omega$, see Lemma \ref{lemma moutainpass structure}. Then, adapting a subtle deformation argument from \cite{Bellazzini-Jeanjean-Luo2013} (which deals with  Schr\"odinger–Poisson equations with $L^2$-prescribed norm, so some modifications are necessary here), we prove that $\phi_\omega$ is indeed a solution of \eqref{equ elliptic}, see Lemma \ref{lemma solution}.
\vskip0.04in
Throughout this section, we denote the norm $\nm{\cdot}_{L^p_{x,y}(\R^{d+1})}$ simply by $\nm{\cdot}_p$.
	\subsection{preliminary results}
	\begin{lemma}[Anisotropic $(x,y)$-Gagliardo-Nirenberg inequality]\label{lemma GNinequality}
		 	For $\alpha\in \big(0,\frac{4 }{d-1}\big)$ and   $\phi\in 	H^1(\R^{d+1})$, there exists a constant  $ C(d,\alpha)>0$ such that 
		\begin{equation}
			\nm{\phi}_{\alpha+2}^{\alpha+2}\leq C(d,\alpha) \nm{ \nabla_x \phi}_2^{\frac{\alpha d}{2}}\nm{\partial_y \phi}_2^{\frac{\alpha}{2}}\nm{\phi}_2^{ \frac{4-\alpha(d-1)}{2}}.
		\end{equation}
	\end{lemma}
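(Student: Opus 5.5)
The plan is to combine a Gagliardo--Nirenberg inequality in the free variable $x\in\R^d$ with a one-dimensional Gagliardo--Nirenberg (Agmon-type) inequality in $y$, glued together by H\"older's inequality and Minkowski's integral inequality. Set $p=\alpha+2$ and write $\phi(x,y)$.

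\emph{Step 1 (slicing in $y$).} For fixed $x$, I will use the one-dimensional inequality
\begin{equation*}
\nm{\phi(x,\cdot)}_{L^p_y}^p\lesssim \nm{\partial_y\phi(x,\cdot)}_{L^2_y}^{\frac{p-2}{2}}\nm{\phi(x,\cdot)}_{L^2_y}^{\frac{p+2}{2}},
\end{equation*}
which is valid for every $p\ge 2$. Integrating in $x$ and applying H\"older in $x$ with exponents $2$ and $2$ gives
\begin{equation*}
\nm{\phi}_p^p\lesssim \nm{\partial_y\phi}_2^{\frac{\alpha}{2}}\Big(\int_{\R^d}\nm{\phi(x,\cdot)}_{L^2_y}^{\alpha+4}\,dx\Big)^{\frac12}.
\end{equation*}
This only works cleanly if the power $(p-2)$ of $\nm{\partial_y\phi(x,\cdot)}_{L^2_y}$ equals $2\cdot\frac{\alpha}{4}$ times the H\"older exponent; with exponents $2,2$ the required identity is $\frac{p-2}{2}\cdot 2=\alpha$, which holds. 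So the exponents are consistent.

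\emph{Step 2 ($x$-estimate for $g(x):=\nm{\phi(x,\cdot)}_{L^2_y}$).} It remains to bound $\nm{g}_{L^{\alpha+4}_x}^{\alpha+4}$. I will use the standard Gagliardo--Nirenberg inequality on $\R^d$,
\begin{equation*}
\nm{g}_{L^{q}_x}^{q}\lesssim \nm{\nabla_x g}_{L^2_x}^{\frac{d(q-2)}{2}}\nm{g}_{L^2_x}^{q-\frac{d(q-2)}{2}}, \qquad q=\alpha+4,
\end{equation*}
together with the diamagnetic-type bound $|\nabla_x g|\le \nm{\nabla_x\phi(x,\cdot)}_{L^2_y}$, which gives $\nm{\nabla_x g}_{L^2_x}\le\nm{\nabla_x\phi}_2$, and $\nm{g}_{L^2_x}=\nm{\phi}_2$. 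Taking square roots produces the exponent $\frac{d(\alpha+2)}{4}$ on $\nm{\nabla_x\phi}_2$. This does \emph{not} equal the target $\frac{\alpha d}{2}$, so the naive split is not balanced.

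\emph{Correction (the main obstacle: choosing the split so that the exponents match scaling).} Instead of applying H\"older with exponents $2,2$, I will use a different route. By Minkowski, $\nm{\phi}_{L^p_xL^p_y}\le$ an interpolation of $L^p_yL^p_x$ norms, and I will apply the $d$-dimensional Gagliardo--Nirenberg inequality for each fixed $y$:
\begin{equation*}
\int\!|\phi|^p\,dx\lesssim \nm{\nabla_x\phi(\cdot,y)}_{L^2_x}^{\frac{\alpha d}{2}}\nm{\phi(\cdot,y)}_{L^2_x}^{p-\frac{\alpha d}{2}}.
\end{equation*}
Integrating in $y$, I will put $\sup_y\nm{\phi(\cdot,y)}_{L^2_x}^{p-\frac{\alpha d}{2}-(2-\frac{\alpha d}{2})}$ outside and apply H\"older with exponents $\frac{4}{\alpha d}$ and $\frac{4}{4-\alpha d}$. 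This is where the restriction $\alpha d<4$ would be needed, which fails in our range. So I will instead split
\begin{equation*}
p-\tfrac{\alpha d}{2}=\big(2-\tfrac{\alpha d}{2}\big)_{\text{paired}}+\tfrac{\alpha}{2}\cdot\ldots
\end{equation*}
More concretely, I will estimate
\begin{equation*}
\nm{\phi}_p^p\lesssim \sup_y\nm{\phi(\cdot,y)}_{L^2_x}^{\beta}\int \nm{\nabla_x\phi}_{L^2_x}^{\frac{\alpha d}{2}}\nm{\phi}_{L^2_x}^{p-\frac{\alpha d}{2}-\beta}\,dy,
\end{equation*}
with $\beta$ chosen so that H\"older in $y$ with exponents $\frac{4}{\alpha d}$ and $\frac{4}{4-\alpha d}$ closes at the $L^2$ level. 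This requires $\alpha d\le 4$ again. Since this fails, the cleaner plan is to run Steps 1--2 in the reverse order.

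\emph{Final route.} For fixed $x$, bound $\nm{\phi(x,\cdot)}_{L^\infty_y}^2\le\nm{\phi(x,\cdot)}_{L^2_y}\nm{\partial_y\phi(x,\cdot)}_{L^2_y}$. Then use the anisotropic Sobolev/Gagliardo--Nirenberg inequality obtained from the Fourier-side estimate
\begin{equation*}
\nm{\phi}_p\lesssim\nm{|\nabla_x|^{s_1}|\partial_y|^{s_2}\text{-type}\ \phi}_2, \qquad s_1=\tfrac{d\alpha}{2p},\quad s_2=\tfrac{\alpha}{2p}.
\end{equation*}
This comes from the embedding $\dot H^{s_1}_x\dot H^{s_2}_y\hookrightarrow L^p$, with $\frac{s_1}{d}+s_2=\frac12-\frac1p$, through the product estimate $\nm{\cdot}_{L^p_xL^p_y}\le\nm{\cdot}_{L^p_x\dot H^{s_2}_y}$ followed by Minkowski ($p\ge2$) and Sobolev in $x$. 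Interpolating $|\xi|^{2s_1}|\eta|^{2s_2}\le |\xi|^{2\theta}|\eta|^{2\tau}\cdot1$ via Young/H\"older on the Fourier side gives the product $\nm{\nabla_x\phi}_2^{s_1\cdot}\nm{\partial_y\phi}_2^{s_2\cdot}\nm{\phi}_2^{1-s_1-s_2}$. Raising to the power $p$ yields exactly the exponents $\frac{\alpha d}{2}$, $\frac{\alpha}{2}$, and $\frac{4-\alpha(d-1)}{2}$. The condition $s_1+s_2<1$ is precisely $\alpha<\frac{4}{d-1}$, and $s_1<\frac d2$ holds. The delicate point is the mixed-norm Sobolev step, which I will justify by the Minkowski inequality since $p\ge 2$.
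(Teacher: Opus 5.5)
Your final route is correct, and it differs from the paper's. The paper does not prove the inequality. It quotes the anisotropic Gagliardo--Nirenberg inequality from the literature in the fully split form $\|\phi\|_{\alpha+2}^{\alpha+2}\le C_1\|\partial_y\phi\|_2^{\alpha/2}\|\phi\|_2^{(4-\alpha(d-1))/2}\prod_{i=1}^d\|\partial_{x_i}\phi\|_2^{\alpha/2}$. It then merges the $x$-factors by AM--GM, $\prod_i\|\partial_{x_i}\phi\|_2\le d^{-d/2}\|\nabla_x\phi\|_2^{d}$.

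You derive the inequality directly, in two steps:
\begin{itemize}
\item the product-type embedding $\|\phi\|_p\lesssim\||\nabla_x|^{s_1}|\partial_y|^{s_2}\phi\|_2$, obtained from one-dimensional Sobolev in $y$ for a.e.\ $x$, then Minkowski $\|\cdot\|_{L^p_xL^2_y}\le\|\cdot\|_{L^2_yL^p_x}$ (valid as $p\ge 2$), then Sobolev in $x$;
\item the three-factor H\"older bound $\int|\xi|^{2s_1}|\eta|^{2s_2}|\hat\phi|^2\le(\int|\xi|^2|\hat\phi|^2)^{s_1}(\int|\eta|^2|\hat\phi|^2)^{s_2}(\int|\hat\phi|^2)^{1-s_1-s_2}$.
\end{itemize}
This argument is self-contained and works for every $d\ge 1$. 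It also shows exactly where $\alpha<4/(d-1)$ enters, namely $s_1+s_2\le 1$. The paper's citation is shorter and gives the finer product over the individual $\partial_{x_i}$, which is not needed here.

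There are three things to fix in the write-up.
\begin{itemize}
\item The scaling relation you state, $\frac{s_1}{d}+s_2=\frac12-\frac1p$, is wrong. A space of type $\dot H^{s_1}_x\dot H^{s_2}_y$ sees two independent dilations, so the embedding needs $s_2=\frac12-\frac1p$ and $s_1=d(\frac12-\frac1p)$ separately. Your chosen values $s_1=\frac{d\alpha}{2p}$, $s_2=\frac{\alpha}{2p}$ satisfy exactly these, and give $\frac{s_1}{d}+s_2=\frac{\alpha}{p}$, not $\frac{\alpha}{2p}$. The argument stands, but the stated condition should be corrected.
\item Your diagnosis of the first attempt is mistaken. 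H\"older in $x$ with exponents $2,2$ does not produce $\|\partial_y\phi\|_2^{\alpha/2}$; it produces $(\int\|\partial_y\phi(x,\cdot)\|_{L^2_y}^{\alpha}\,dx)^{1/2}$, so your consistency check fails unless $\alpha=2$. With exponents $\frac{4}{\alpha}$ and $\frac{4}{4-\alpha}$ you get $\|\partial_y\phi\|_2^{\alpha/2}\|g\|_{L^q_x}^{(\alpha+4)/2}$ with $q=\frac{2(\alpha+4)}{4-\alpha}$. Gagliardo--Nirenberg on $\R^d$ with $\theta=\frac{d\alpha}{\alpha+4}<1$ then gives exactly $\|\nabla_x\phi\|_2^{\alpha d/2}\|\phi\|_2^{(4-\alpha(d-1))/2}$. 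So the slicing route was never unbalanced, and it also closes for $d\ge 2$, where $\alpha<4$.
\item The Agmon bound at the start of the final route is never used and can be dropped.
\end{itemize}
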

	\begin{proof}
		It follows from  the anisotropic Gagliardo-Nirenberg inequality (see, for example, \cite{anisotropicGN1,anisotropicGN2}) and Young's inequality that there exists a constant $ C_1(d,\alpha)>0$ such that 
		\begin{equation}
			\begin{aligned}
				\nm{\phi}_{\alpha+2}^{\alpha+2}&\leq C_1(d,\alpha)\nm{\partial_y \phi}_2^{\frac{\alpha}{2}}  \nm{\phi}_2^{\frac{4-\alpha(d-1)}{2}}\prod_{i=1}^d \nm{\partial_{x_i} \phi}_2^{\frac{\alpha}{2}}\\
				&\leq C(d,\alpha)\nm{ \nabla_x \phi}_2^{\frac{\alpha d}{2}}\nm{\partial_y \phi}_2^{\frac{\alpha}{2}}\nm{\phi}_2^{ \frac{4-\alpha(d-1)}{2}},
			\end{aligned}
		\end{equation}
		where $C(d,\alpha)=  C_1(d,\alpha) d^{-\frac{\alpha d}{4}}$.  The proof is complete.
	\end{proof}

	\vskip0.12in
	For any function $\phi \in 	\Sigma$ and a constant $ \lambda>0$, we define   the scaling operator  $ \phi^\lambda$   by
	\begin{equation}\label{scalingoperator}
		\phi^\lambda(x,y)=\lambda^{\frac{d}{2}} \phi \sbr{\lambda x,y}.
	\end{equation}
	In what follows, we establish several important properties of the mappings  $\lambda\mapsto  Q(\phi^\lambda)$, which  will play a crucial role in the proof of our main theorems.
	\begin{lemma} \label{lemma property Q}
		For any $\omega>0$ and $\phi \in 	\Sigma\setminus\lbr{0}$, the following properties hold:
		
		\begin{itemize}	\medbreak
			\item[(i)]   $\frac{\partial }{\partial \lambda}S_\omega(\phi^\lambda)= \lambda^{-1} Q(\phi^\lambda)$ for all $\lambda>0$;
			\medbreak
			\item[(ii)]  There exists a unique $\lambda_\star=\lambda_\star(\phi)$ such that  $	Q(\phi^{\lambda_\star})	=0$. Moreover, 
			\begin{equation}
				Q(\phi^\lambda)\begin{cases}
					>0, \quad \text{ if }~\lambda\in \sbr{0,\lambda_\star},\\
					<0, \quad \text{ if }~\lambda\in \sbr{ \lambda_\star,\infty}.
				\end{cases}
			\end{equation}
			
			\medbreak
			\item[(iii)] We have	the following relations  
			\[
		\lambda_\star > 1 \ \Leftrightarrow\ Q(\phi) > 0; \quad
			\lambda_\star = 1 \ \Leftrightarrow\ Q(\phi) = 0; \quad 
			\lambda_\star < 1 \ \Leftrightarrow\ Q(\phi) < 0.
			\]
			\medbreak
			
			\item[(iv)] $S_\omega(\phi^\lambda) < S_\omega(\phi^{\lambda_\star})  $ for all $ \lambda> 0 $ and $ \lambda \neq  \lambda_\star  $.
		\end{itemize}		 
	\end{lemma}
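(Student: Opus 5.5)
The plan is to compute everything explicitly under the scaling \eqref{scalingoperator}. Changing variables $x\mapsto \lambda^{-1}x$ gives
\begin{equation}
\nm{\phi^\lambda}_2=\nm{\phi}_2,\quad \nm{\nabla_x\phi^\lambda}_2^2=\lambda^2\nm{\nabla_x\phi}_2^2,\quad \nm{\partial_y\phi^\lambda}_2=\nm{\partial_y\phi}_2,\quad \nm{y\phi^\lambda}_2=\nm{y\phi}_2,
\end{equation}
and $\nm{\phi^\lambda}_{\alpha+2}^{\alpha+2}=\lambda^{\frac{\alpha d}{2}}\nm{\phi}_{\alpha+2}^{\alpha+2}$. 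Hence
\begin{equation}
S_\omega(\phi^\lambda)=\frac{\lambda^2}{2}a+c-\frac{\lambda^{\frac{\alpha d}{2}}}{\alpha+2}b,\qquad Q(\phi^\lambda)=\lambda^2 a-\frac{\alpha d}{2(\alpha+2)}\lambda^{\frac{\alpha d}{2}}b,
\end{equation}
where $a=\nm{\nabla_x\phi}_2^2$, $b=\nm{\phi}_{\alpha+2}^{\alpha+2}$, and $c=\frac12\nm{\partial_y\phi}_2^2+\frac12\nm{y\phi}_2^2+\frac{\omega}{2}\nm{\phi}_2^2$. Since $\phi\neq0$ lies in $\Sigma\subset H^1$, we have $a>0$ and $b>0$. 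For $a$, a nonzero $H^1$ function cannot have $\nabla_x\phi\equiv0$, because it would then be constant in $x$ and not in $L^2$.

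\textbf{(i).} Differentiating in $\lambda$ gives $\lambda a-\frac{\alpha d}{2(\alpha+2)}\lambda^{\frac{\alpha d}{2}-1}b=\lambda^{-1}Q(\phi^\lambda)$.

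\textbf{(ii).} Write $Q(\phi^\lambda)=\lambda^2\big(a-\frac{\alpha d}{2(\alpha+2)}\lambda^{\frac{\alpha d}{2}-2}b\big)$. Since $\alpha d>4$, the bracket is strictly decreasing in $\lambda$. It tends to $a>0$ as $\lambda\to0$ and to $-\infty$ as $\lambda\to\infty$. Therefore it has a unique zero
\begin{equation}
\lambda_\star=\Big(\frac{2(\alpha+2)a}{\alpha d\, b}\Big)^{\frac{2}{\alpha d-4}},
\end{equation}
with the bracket positive before $\lambda_\star$ and negative after, which is the claimed sign pattern.

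\textbf{(iii).} This follows from (ii) by evaluating at $\lambda=1$: $Q(\phi)=Q(\phi^1)$ is positive, zero, or negative exactly when $1<\lambda_\star$, $1=\lambda_\star$, or $1>\lambda_\star$.

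\textbf{(iv).} By (i) and (ii), $\lambda\mapsto S_\omega(\phi^\lambda)$ is strictly increasing on $(0,\lambda_\star]$ and strictly decreasing on $[\lambda_\star,\infty)$. Hence $\lambda_\star$ is a strict global maximum point.

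There is no real obstacle here. The only points needing care are the positivity of $a$ and $b$ for $\phi\neq0$, and the use of $\alpha d>4$ from \eqref{assumption}, which makes the exponent $\frac{\alpha d}{2}-2$ positive and so gives both the monotonicity of the bracket and uniqueness of the zero.
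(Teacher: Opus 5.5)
Your proof is correct and follows the same route as the paper. Both compute $S_\omega(\phi^\lambda)$ and $Q(\phi^\lambda)$ explicitly under the scaling, do a sign analysis of $Q(\phi^\lambda)$, and get (iv) from monotonicity of $\lambda\mapsto S_\omega(\phi^\lambda)$ via (i). Your two departures are improvements: factoring out $\lambda^2$ so the bracket is strictly decreasing gives the unique zero more directly than the paper's detour through the critical point of $f'(\lambda)$, and your check that $\nm{\nabla_x\phi}_2>0$ and $\nm{\phi}_{\alpha+2}>0$ supplies a detail the paper leaves implicit.
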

	
	\begin{proof}
		By a direct calculation, we have  
		\begin{align}\label{defi of E lambda}
			S_\omega(\phi^\lambda)=&\frac{1}{2} \lambda^{2} \nm{\nabla_x \phi }_2^2 -\frac{1}{\alpha+2}   \lambda^{\frac{\alpha d}{2}}\nm{ \phi}_{\alpha+2}^{\alpha+2} +\frac{1}{2} \sbr{ \nm{\partial_y \phi }_2^2   +   \nm{ y\phi }_{2}^2+ \omega \nm{  \phi }_{2}^2 } ,\\\label{defi of Q lambda}
			Q( \phi^\lambda)=& \lambda^{2} \nm{\nabla_x \phi }_2^2 -  \frac{\alpha d }{2\sbr{ \alpha+2} }  \lambda^{\frac{\alpha d}{2}}\nm{ \phi}_{\alpha+2}^{\alpha+2}.
		\end{align}
		Therefore,    the conclusion (i) holds.	 Let $f(\lambda)=	Q( \phi^\lambda)$. Then  
		$$f'(\lambda)= \lambda \sbr{2 \nm{\nabla_x \phi }_2^2 -  \frac{\alpha^2 d^2 }{4\sbr{ \alpha+2} }  \lambda^{\frac{\alpha d-4}{2}}\nm{ \phi}_{\alpha+2}^{\alpha+2}}.$$
		Since $\alpha >4/d$, we can find a unique
		$\lambda_0=\lambda_0(\phi)>0$ such that $ f'(\lambda)$ is positive  on $ \sbr{0,\lambda_0}$ and negative on $\sbr{\lambda_0,\infty}$. Moreover, we deduce from \eqref{defi of Q lambda} that there exists a unique $ \lambda_\star=\lambda_\star(\phi)>\lambda_0$ such that $ f(\lambda_\star)=0$, and  $f (\lambda)$ is positive  on $ \sbr{0,\lambda_\star}$ and negative on $\sbr{\lambda_\star,\infty}$. Hence, the  conclusion (ii) holds. Observe that  $ f(1)=  Q(\phi^1)=Q(\phi)$, then the conclusion (iii)  is a direct consequence of (ii). Finally, to prove (iv), we deduce  from (i) that 
		\begin{equation}
				S_\omega(\phi^{\lambda_\star})= 	S_\omega(\phi^{\lambda}) +\int_{\lambda}^{\lambda_\star}s^{-1} f(s)  {\mathrm{d} s}.
		\end{equation} 
		Hence, conclusion (iv)  follows directly from the facts that $f(s) $ is positive on $ \sbr{0,\lambda_\star}$ and negative on $\sbr{\lambda_\star,\infty}$. The proof is complete.
	\end{proof}
      
			\subsection{The properties of  $m_\omega$}

		\begin{lemma}\label{lemma bounded energy level}
			For any $\omega>0$, we have $m_\omega \in \sbr{0,\infty}$.
		\end{lemma}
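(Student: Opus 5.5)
The proof splits into finiteness of $m_\omega$ and strict positivity of $m_\omega$.

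\textbf{Finiteness.} By Lemma \ref{lemma property Q}(ii), every $\phi\in\Sigma\setminus\{0\}$ has a $\lambda_\star=\lambda_\star(\phi)$ with $Q(\phi^{\lambda_\star})=0$. Hence $\phi^{\lambda_\star}\in K$, the set $K$ is nonempty, and $m_\omega\le S_\omega(\phi^{\lambda_\star})<\infty$.

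\textbf{Lower bound.} For $\phi\in K$ the constraint gives $\|\nabla_x\phi\|_2^2=\frac{\alpha d}{2(\alpha+2)}\|\phi\|_{\alpha+2}^{\alpha+2}$. Substituting this into $S_\omega$ yields
\begin{equation}
S_\omega(\phi)=\Big(\frac12-\frac{2}{\alpha d}\Big)\|\nabla_x\phi\|_2^2+\frac12\big(\|\partial_y\phi\|_2^2+\|y\phi\|_2^2+\omega\|\phi\|_2^2\big).
\end{equation}
Since $\alpha d>4$, every coefficient is positive, so $S_\omega\ge 0$ on $K$ and $m_\omega\ge0$. It remains to exclude $m_\omega=0$, that is, to show the right-hand side is bounded away from zero on $K$.

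\textbf{Strict positivity.} Fix $\phi\in K$ and set $a=\|\nabla_x\phi\|_2$ and $b^2=\|\partial_y\phi\|_2^2+\omega\|\phi\|_2^2$. Lemma \ref{lemma GNinequality} applied to the constraint gives
\begin{equation}
a^2\lesssim a^{\frac{\alpha d}{2}}\|\partial_y\phi\|_2^{\frac{\alpha}{2}}\|\phi\|_2^{\frac{4-\alpha(d-1)}{2}}\lesssim_\omega a^{\frac{\alpha d}{2}}\,b^{\frac{\alpha}{2}+\frac{4-\alpha(d-1)}{2}}.
\end{equation}
Here $\frac{4-\alpha(d-1)}{2}>0$ because $\alpha<4/(d-1)$. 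Since $a>0$, dividing by $a^2$ gives
\begin{equation}
1\lesssim a^{\frac{\alpha d-4}{2}}\, b^{\frac{4-\alpha(d-2)}{2}}.
\end{equation}
Both exponents are positive: $\frac{\alpha d-4}{2}>0$ by the mass-supercritical assumption, and $4-\alpha(d-2)>4-\alpha(d-1)>0$.

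Now suppose $m_\omega=0$ and take $\phi_n\in K$ with $S_\omega(\phi_n)\to0$. By the identity above, $S_\omega(\phi_n)\gtrsim a_n^2+b_n^2$, so both $a_n\to0$ and $b_n\to0$. This contradicts the displayed lower bound, because its right-hand side is a product of positive powers of $a_n$ and $b_n$. Hence $m_\omega>0$.

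The only delicate point is the strict positivity. The anisotropic Gagliardo--Nirenberg inequality is exactly what is needed there, since the constraint $Q(\phi)=0$ does not control $\partial_y\phi$ on its own. The positivity of the exponent of $b$ is where the intercritical range \eqref{assumption} enters.
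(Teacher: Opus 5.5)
Your proposal is correct and follows essentially the same route as the paper: on $K$ you use the identity $S_\omega=S_\omega-\frac{2}{\alpha d}Q$, then the anisotropic Gagliardo--Nirenberg inequality (Lemma \ref{lemma GNinequality}) together with $\alpha d>4$. The only cosmetic difference is that the paper first uses boundedness of a minimizing sequence in $\Sigma$ to bound $\|\nabla_x\phi_n\|_2$ from below, whereas you keep both $a$ and $b$ and note that $m_\omega=0$ would force both to vanish, contradicting $1\lesssim a^{\frac{\alpha d-4}{2}}b^{\frac{4-\alpha(d-2)}{2}}$.
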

		
		\begin{proof}
			First,  we know that $ m_\omega<\infty$ since the set $K\neq \emptyset$. In what follows, we show that $m_\omega>0 $. Indeed, let $ \sbr{\phi_n}_n\subset K $ such that $ S_\omega(\phi_n)=m_\omega+o_n(1)$. Then 
			\begin{equation}\label{se1}
				\begin{aligned}
				m_\omega+o_n(1)&=S_\omega(\phi_n)=S_\omega(\phi_n)-\frac{2}{\alpha d} Q(\phi_n)\\&= \frac{1}{2}    \nm{\partial_y \phi_n }_2^2   +  \frac{1}{2} \nm{ y\phi_n }_{2}^2+ \frac{\omega}{2} \nm{  \phi_n }_{2}^2   +\sbr{\frac{1}{2}-\frac{2}{\alpha d}} \nm{ \nabla_x \phi_n}_2^2 .
				\end{aligned}
			\end{equation}
		 Hence, from $\alpha>4/d$, we know that the sequence  $ \sbr{\phi_n}_n  $ is bounded in $	\Sigma$. Now by Lemma \ref{lemma GNinequality}, we deduce from the fact  $\alpha < 4/(d-1)$  that 
		\begin{equation}\label{lq6}
			\begin{aligned}
				\nm{\nabla_x \phi_n }_2^2 = \frac{\alpha d }{2\sbr{ \alpha+2} }  \nm{ \phi_n}_{\alpha+2}^{\alpha+2} &\lesssim  \nm{ \nabla_x \phi_n }_2^{\frac{\alpha d}{2}}\nm{\partial_y \phi_n }_2^{\frac{\alpha}{2}}\nm{\phi_n }_2^{ \frac{4-\alpha(d-1)}{2}}  \lesssim  \nm{ \nabla_x \phi_{n} }_2^{\frac{\alpha d}{2}}  .
			\end{aligned}
		\end{equation}
		 Combined with  $\alpha>4/d$, this implies the existence of constant $\widetilde{C} =\widetilde{C} (\omega,\alpha,d)>0$  such that 
		 \begin{equation}\label{lq3}
		  \liminf_{n\to\infty}   \nm{ \phi_n}_{\alpha+2}^{\alpha+2} \sim  \liminf_{n\to\infty}\nm{\nabla_x \phi_{n} }_2^2 \geq \widetilde{C} .
		 \end{equation}
		 Therefore, we see from \eqref{se1} that 
		 \begin{equation}
		 	 	m_\omega \geq \liminf_{n\to\infty}\sbr{\frac{1}{2}-\frac{2}{\alpha d}}\nm{\nabla_x \phi_{n} }_2^2 \geq \sbr{\frac{1}{2}-\frac{2}{\alpha d}}\widetilde{C}   >0.
		 \end{equation}
			This completes the proof.
		\end{proof}

		\vskip 0.15in
	 Define \begin{equation}\label{defi of I}
         \begin{aligned}
              		I_\omega(\phi):&=S_\omega(\phi)-\frac{2}{\alpha d}Q(\phi)\\&= \frac{1}{2} \sbr{ \nm{\partial_y \phi }_2^2   +   \nm{ y\phi }_{2}^2+ \omega \nm{  \phi }_{2}^2 }  +\sbr{\frac{1}{2}-\frac{2}{\alpha d}} \nm{ \nabla_x \phi_n}_2^2.
         \end{aligned}		
		 	\end{equation}
		
		 \begin{lemma} \label{lemma Le Coz}
		 For any $\omega>0$, we define \begin{equation}\tilde{m}_\omega:=\inf \lbr{ I_\omega(\phi): ~\phi\in 	\Sigma\setminus\lbr{0}, ~Q(\phi) \leq 0}.
 \end{equation}
		 	Then $m_\omega =\tilde{m}_\omega$.

		 \end{lemma}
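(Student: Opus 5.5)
We prove the two inequalities $\tilde m_\omega\le m_\omega$ and $m_\omega\le\tilde m_\omega$ separately.

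\emph{The inequality $\tilde m_\omega\le m_\omega$.} If $\phi\in K$, then $Q(\phi)=0$. Hence $I_\omega(\phi)=S_\omega(\phi)$ and $\phi$ is admissible for $\tilde m_\omega$. Taking the infimum over $K$ gives the inequality.

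\emph{The inequality $m_\omega\le\tilde m_\omega$.} Fix $\phi\in\Sigma\setminus\{0\}$ with $Q(\phi)\le0$. By Lemma \ref{lemma property Q}(ii)--(iii) there is a unique $\lambda_\star=\lambda_\star(\phi)\in(0,1]$ with $Q(\phi^{\lambda_\star})=0$, so $\phi^{\lambda_\star}\in K$. We now compare $I_\omega$ along the scaling. Under \eqref{scalingoperator} the quantities $\nm{\partial_y\phi}_2$, $\nm{y\phi}_2$ and $\nm{\phi}_2$ are unchanged, while $\nm{\nabla_x\phi^\lambda}_2^2=\lambda^2\nm{\nabla_x\phi}_2^2$. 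Therefore
\begin{equation}
I_\omega(\phi^{\lambda})=\frac12\sbr{\nm{\partial_y\phi}_2^2+\nm{y\phi}_2^2+\omega\nm{\phi}_2^2}+\sbr{\frac12-\frac{2}{\alpha d}}\lambda^2\nm{\nabla_x\phi}_2^2 .
\end{equation}
Since $\alpha d>4$, this is nondecreasing in $\lambda$. Using $\lambda_\star\le1$ we obtain
\begin{equation}
m_\omega\le S_\omega(\phi^{\lambda_\star})=I_\omega(\phi^{\lambda_\star})\le I_\omega(\phi).
\end{equation}
Taking the infimum over all such $\phi$ gives $m_\omega\le\tilde m_\omega$.

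The only point needing care is $\lambda_\star\le1$ when $Q(\phi)\le0$, which is exactly Lemma \ref{lemma property Q}(iii). The argument also uses that $\phi\neq0$ forces $\nm{\nabla_x\phi}_2>0$, which holds because $\phi\in H^1$ is nonzero. No real obstacle is expected; the argument is a direct monotonicity comparison.
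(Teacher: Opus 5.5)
Your proof is correct and follows essentially the paper's argument. The easy inequality comes from $I_\omega=S_\omega$ on $K$. The reverse inequality comes from rescaling to $\phi^{\lambda_\star}$ with $\lambda_\star\le 1$ (Lemma \ref{lemma property Q}(iii)) and using that $I_\omega(\phi^\lambda)$ is nondecreasing in $\lambda$. The only difference is cosmetic: the paper runs the second step along a minimizing sequence for $\tilde m_\omega$ rather than an arbitrary admissible $\phi$, and it uses the monotonicity of $I_\omega(\phi^\lambda)$ without spelling it out as you do.
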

		 \begin{proof}
		 	Obviously, we have 
		 	\begin{equation}\label{pq1}
		 	\tilde{m}_\omega \leq \inf \lbr{  I_\omega(\phi):\phi \in K}=\inf \lbr{ S_\omega(\phi): \phi \in K}=	m_\omega<\infty.
		 	\end{equation}
		 	On the other hand, let  $\sbr{\phi_n}_n\subset \Sigma\setminus\lbr{0}$ be a minimizing sequence  for $\tilde{m}_\omega$,  i.e.,  
		 	\begin{equation} \label{p2}
		 		\lim_{n\to\infty} I_\omega(\phi_n)=\tilde{m}_\omega<\infty\quad \text{and} \quad Q(\phi_n) \leq  0, \quad \forall n\in \N.
		 	\end{equation}	
		 	By Lemma \ref{lemma property Q}, there exists $\lambda_n\in (0,1]$ such that $Q(\phi_n^{\lambda_n} )=0$.  Then 
		 	\begin{equation}
		 		\begin{aligned}
		 			m_\omega\leq S_\omega( \phi_n^{\lambda_n} )=I_\omega(\phi_n^{\lambda_n} )\leq I_\omega(\phi_n)=\tilde{m}_\omega+o_n(1).
		 		\end{aligned}
		 	\end{equation}
		 	Taking the limit as $n\to \infty$, we obtain $ 	m_\omega\leq \tilde{m}_\omega$, which together with \eqref{pq1}, gives that $m_\omega =\tilde{m}_\omega$. The proof is complete.
		 \end{proof}
 
	\vskip0.15in

In what follows,	we show that  the functional $S_\omega$ has a mountain pass structure   at level $m_\omega$. 
	\begin{definition}
		The functional $S_\omega $ is said to have a mountain pass structure   at level $\sigma_\omega $ if there exist some constants $ a>0 $ and  $ 0<\delta<m_\omega$ such that 
		\begin{equation}
			\sigma_\omega :=\inf_{\gamma\in\Gamma_\omega} \max_{s\in \mbr{0,1}} S_\omega(\gamma(s))>\max\lbr{ \sup_{\gamma\in\Gamma_\omega}  S_\omega(\gamma(0)), \sup_{\gamma\in\Gamma_\omega}  S_\omega(\gamma(1)) },
		\end{equation}
		where 
		\begin{equation}
			\Gamma_\omega:=\lbr{ \gamma\in C([0,1],\Sigma):  \gamma(0) \in A_{a,\delta} , ~ S_\omega(\gamma(1))<0 },
		\end{equation}
		and 
		\begin{equation}
			A_{a,\delta}:=\lbr{ \phi\in \Sigma: \nm{\nabla_x \phi}_2^2<a,~~\nm{\partial_y \phi}_2^2+\nm{ y\phi }_{2}^2+\omega\nm{\phi }_{2}^2 <2(m_\omega- \delta) }.
		\end{equation}
	\end{definition}
	\vskip0.1in
	\begin{lemma} \label{lemma Adelta Q}
		For any $\omega>0$, there exists a constant $a_0>0$   (independent of $\delta$)  such that for $a\in\sbr{0,a_0}$,  the inequality $Q(\phi)>0$ holds for  all $\phi\in 	A_{a,\delta}$. 
	\end{lemma}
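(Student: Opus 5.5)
The argument will rest on the anisotropic Gagliardo--Nirenberg inequality of Lemma \ref{lemma GNinequality}, with all quantities other than $\nm{\nabla_x\phi}_2$ controlled by the definition of $A_{a,\delta}$. Fix $\phi\in A_{a,\delta}$, $\phi\neq 0$. If $\nabla_x\phi\equiv 0$, then $\phi$ is independent of $x$ and lies in $L^2(\R^{d+1})$, so $\phi=0$. We may therefore assume $\nm{\nabla_x\phi}_2>0$.

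The second constraint in $A_{a,\delta}$ gives
\[
\nm{\partial_y\phi}_2^2+\omega\nm{\phi}_2^2<2(m_\omega-\delta)<2m_\omega .
\]
Hence $\nm{\partial_y\phi}_2^2\le 2m_\omega$ and $\nm{\phi}_2^2\le 2m_\omega/\omega$. These bounds do not depend on $\delta$, and $m_\omega<\infty$ by Lemma \ref{lemma bounded energy level}.

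Inserting these bounds into Lemma \ref{lemma GNinequality}, and noting that the exponent $\frac{4-\alpha(d-1)}{2}$ is positive because $\alpha<4/(d-1)$, we obtain
\[
\frac{\alpha d}{2(\alpha+2)}\nm{\phi}_{\alpha+2}^{\alpha+2}\le C_0\,\nm{\nabla_x\phi}_2^{\frac{\alpha d}{2}},
\]
where $C_0=C_0(d,\alpha,\omega,m_\omega)$. It follows that
\[
Q(\phi)\ge \nm{\nabla_x\phi}_2^2\Big(1-C_0\nm{\nabla_x\phi}_2^{\frac{\alpha d-4}{2}}\Big).
\]
Since $\alpha d>4$, the exponent $\frac{\alpha d-4}{2}$ is positive. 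Using $\nm{\nabla_x\phi}_2^2<a$, the bracket is at least $1-C_0a^{\frac{\alpha d-4}{4}}$.

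We choose $a_0>0$ so that $C_0a_0^{\frac{\alpha d-4}{4}}\le \tfrac12$. Then for every $a\in(0,a_0)$ we get $Q(\phi)\ge\tfrac12\nm{\nabla_x\phi}_2^2>0$, and $a_0$ is independent of $\delta$ as required.

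The only point needing care is that $\delta$ must not enter $C_0$. This is handled by bounding with $2m_\omega$ instead of $2(m_\omega-\delta)$. We also note that the inequality is meant for nonzero $\phi$, since $Q(0)=0$.
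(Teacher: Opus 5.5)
Your proposal is correct and follows essentially the same route as the paper: bound $\nm{\partial_y\phi}_2$ and $\nm{\phi}_2$ by $2m_\omega$ (independently of $\delta$), insert into the anisotropic Gagliardo--Nirenberg inequality to get $Q(\phi)\ge \nm{\nabla_x\phi}_2^2-C\nm{\nabla_x\phi}_2^{\alpha d/2}$, and use $\alpha d>4$ with $a_0$ small. Your additional remarks, that $\phi\neq0$ forces $\nabla_x\phi\not\equiv0$ and that the claim must exclude $\phi=0$ since $Q(0)=0$, are valid refinements that the paper's proof leaves implicit.
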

	\begin{proof}
		 Since   $ \nm{\partial_y \phi}_2^2+\nm{ y\phi }_{2}^2+\omega\nm{\phi }_{2}^2<2m_\omega$ and  $\alpha>4/d$,  we deduce from Lemma \ref{lemma GNinequality} that
		\begin{equation}
			Q(\phi)=\nm{\nabla_x \phi }_2^2 -  \frac{\alpha d }{2\sbr{ \alpha+2} }   \nm{ \phi}_{\alpha+2}^{\alpha+2} \geq \nm{\nabla_x \phi }_2^2  -C\nm{ \nabla_x \phi}_2^{\frac{\alpha d}{2}}>0,
		\end{equation}
		provided 
		$ \nm{\nabla_x \phi}_2^2<a$ with  $a\in\sbr{0,a_0}$ for  sufficiently small $a_0$.
	\end{proof}
	\vskip0.1in
	\begin{lemma}[Mountain pass structure]\label{lemma moutainpass structure}
		For any $\omega>0$,	there exists   $ 0<\delta<m_\omega$ such that for any $  0<a<\min\lbr{a_0,\delta} $, we have  $$	m_\omega=	\sigma_\omega,$$ and the functional $S_\omega$ has a mountain pass structure    at level $m_\omega$. 
	\end{lemma}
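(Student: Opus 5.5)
We prove the two inequalities $\sigma_\omega \ge m_\omega$ and $\sigma_\omega \le m_\omega$ separately. We then check that the endpoint values of paths in $\Gamma_\omega$ stay strictly below $m_\omega$.

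\textbf{Choice of $\delta$ and endpoint bounds.} First fix $\delta\in(0,m_\omega)$ small, to be specified below, and take $0<a<\min\{a_0,\delta\}$, with $a_0$ from Lemma \ref{lemma Adelta Q}. For $\phi\in A_{a,\delta}$ we have
\[
S_\omega(\phi)\le \tfrac12\nm{\nabla_x\phi}_2^2+\tfrac12\sbr{\nm{\partial_y\phi}_2^2+\nm{y\phi}_2^2+\omega\nm{\phi}_2^2}<\tfrac a2+(m_\omega-\delta)<m_\omega-\tfrac\delta2 .
\]
Hence $\sup_{\gamma\in\Gamma_\omega}S_\omega(\gamma(0))\le m_\omega-\delta/2$. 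Also $S_\omega(\gamma(1))<0<m_\omega$ for every $\gamma\in\Gamma_\omega$. Once $\sigma_\omega=m_\omega$ is established, the strict inequality in the definition of mountain pass structure follows immediately.

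\textbf{Lower bound $\sigma_\omega\ge m_\omega$.} Let $\gamma\in\Gamma_\omega$. By Lemma \ref{lemma Adelta Q}, $Q(\gamma(0))>0$. Next we show $Q(\gamma(1))<0$. Since $S_\omega(\gamma(1))<0$,
\[
I_\omega(\gamma(1))=S_\omega(\gamma(1))-\tfrac{2}{\alpha d}Q(\gamma(1))\ge0
\]
forces $Q(\gamma(1))<0$, because $I_\omega>0$ on nonzero functions. (Note that $\gamma(1)\neq 0$, since $S_\omega(0)=0$.)

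The map $s\mapsto Q(\gamma(s))$ is continuous on $\Sigma$; this uses the continuity of the $L^{\alpha+2}$ norm on $\Sigma$, which follows from Lemma \ref{lemma GNinequality}. So there is $s_0$ with $Q(\gamma(s_0))=0$. We also need $\gamma(s_0)\neq0$. To see this, take the largest zero $s_0$ of $Q(\gamma(\cdot))$ below the first time $Q$ becomes negative. Near that point $Q<0$ at nearby nonzero functions, and $Q(\phi)<0$ with $\phi$ small in $\Sigma$ is impossible by the same Gagliardo–Nirenberg argument as in Lemma \ref{lemma Adelta Q}. Concretely, $Q(\phi)\ge \nm{\nabla_x\phi}_2^2(1-C\nm{\phi}_\Sigma^{\alpha d/2-2+\cdots})$ near zero, so a neighbourhood of $0$ lies in $\{Q\ge0\}$. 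Hence $\gamma(s_0)\in K$, and
\[
\max_{s}S_\omega(\gamma(s))\ge S_\omega(\gamma(s_0))\ge m_\omega .
\]

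\textbf{Upper bound $\sigma_\omega\le m_\omega$.} For $\varepsilon>0$ pick $\phi\in K$ with $S_\omega(\phi)<m_\omega+\varepsilon$, and use the fibre $\lambda\mapsto\phi^\lambda$ from \eqref{scalingoperator}. By \eqref{defi of E lambda}, $\nm{\nabla_x\phi^\lambda}_2^2=\lambda^2\nm{\nabla_x\phi}_2^2\to0$ as $\lambda\to0$. The $y$-part is unchanged and satisfies
\[
\tfrac12\sbr{\nm{\partial_y\phi}_2^2+\nm{y\phi}_2^2+\omega\nm{\phi}_2^2}\le I_\omega(\phi)=S_\omega(\phi)<m_\omega+\varepsilon .
\]
The main obstacle is here: this bound is below $m_\omega+\varepsilon$, but membership in $A_{a,\delta}$ requires it to be below $m_\omega-\delta$. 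To fix this we use the slack
\[
\sbr{\tfrac12-\tfrac2{\alpha d}}\nm{\nabla_x\phi}_2^2\ge \sbr{\tfrac12-\tfrac2{\alpha d}}\widetilde C
\]
coming from \eqref{lq3}, which holds uniformly for elements of $K$ with bounded action (the argument in Lemma \ref{lemma bounded energy level} applies to each such $\phi$). This gives
\[
\tfrac12\sbr{\nm{\partial_y\phi}_2^2+\nm{y\phi}_2^2+\omega\nm{\phi}_2^2}\le m_\omega+\varepsilon-c_0,\qquad c_0:=\sbr{\tfrac12-\tfrac2{\alpha d}}\widetilde C>0 .
\]
We therefore choose $\delta<c_0/2$ and $\varepsilon<c_0/2$, so that $\phi^{\lambda}\in A_{a,\delta}$ for $\lambda$ small. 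For $\lambda$ large, $S_\omega(\phi^\lambda)<0$, since the $\lambda^{\alpha d/2}$ term dominates. The path $s\mapsto\phi^{\lambda(s)}$, with $\lambda$ running from small to large, is continuous into $\Sigma$, so it lies in $\Gamma_\omega$. By Lemma \ref{lemma property Q}(iv) and (iii) (here $\lambda_\star=1$), its maximum is $S_\omega(\phi)<m_\omega+\varepsilon$. Letting $\varepsilon\to0$ gives $\sigma_\omega\le m_\omega$, completing the proof.
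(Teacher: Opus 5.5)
Your proof is correct and is essentially the paper's argument. The fibre path $\lambda\mapsto\phi^\lambda$ through a (near-)minimizer in $K$ gives $\sigma_\omega\le m_\omega$; the sign change $Q(\gamma(0))>0>Q(\gamma(1))$ plus continuity gives $\sigma_\omega\ge m_\omega$; and $S_\omega<m_\omega-\delta/2$ on $A_{a,\delta}$ handles the endpoints. Two steps tighten the paper without changing the route: your uniform lower bound $c_0$ on $\big(\tfrac12-\tfrac{2}{\alpha d}\big)\nm{\nabla_x\phi}_2^2$ over $K\cap\{S_\omega\le m_\omega+1\}$, which replaces the paper's liminf along one minimizing sequence, and your check that the selected zero $\gamma(s_0)$ is nonzero, which the paper omits and which also covers the case $\gamma(0)=0\in A_{a,\delta}$.
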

	\begin{proof}
		First, we   prove that $m_\omega=	\sigma_\omega$.  Let   
		$ \sbr{\phi_n}_n\subset K$ be a minimizing sequence such that $ S_\omega(\phi_n)=m_\omega+o_n(1)$. Similar to \eqref{se1} and \eqref{lq3}, we deduce   that  $\sbr{\phi_n}_n$   is bounded in $\Sigma$, and   there exists  a constant $\zeta=\zeta(c,d,\alpha)$  such that 
		\begin{equation}\label{l4}
			\liminf_{n\to\infty}\sbr{\frac{1}{2}-\frac{2}{\alpha d}} 	\nm{\nabla_x \phi_n }_2^2 \geq \zeta.
		\end{equation} 
		We may further assume that  $\zeta$  is small such that $\zeta<4m_\omega$.  For $n$ sufficiently large, we have 
		\begin{equation}
		m_\omega+\frac{1}{4}\zeta \geq  S_\omega(\phi_n), \quad \text{ and } \quad \sbr{\frac{1}{2}-\frac{2}{\alpha d}} 	\nm{\nabla_x \phi_n }_2^2 \geq \frac{1}{2}\zeta.
		\end{equation}
		Combining this with \eqref{se1},  we obtain
		\begin{equation}
				m_\omega+\frac{1}{4}\zeta \geq  S_\omega(\phi_n)\geq \frac{1}{2} \sbr{ \nm{\partial_y \phi_n }_2^2   +   \nm{ y\phi_n }_{2}^2+ \omega \nm{  \phi_n }_{2}^2 }+\frac{1}{2}\zeta.
		\end{equation}
		Set 
		\begin{equation}\label{defi of delta}
			\delta= {\zeta}/{4}\in  (0,	m_\omega) ~ \text{ and } ~0<a<\min\lbr{a_0,\delta} .
		\end{equation}
		Then for any fixed $n$ sufficiently large, we have  $$ \nm{\partial_y \phi_n}_{2}^2 + \nm{ y\phi_n}_{2}^2 + \omega \nm{  \phi_n }_{2}^2 <2(m_\omega-\delta).$$ Since $$\nm{\partial_y \phi_n^\lambda}_{2}^2 + \nm{ y\phi_n^\lambda}_{2}^2+ \omega \nm{  \phi^\lambda_n}_{2}^2= \nm{\partial_y \phi_n}_{2}^2 + \nm{ y\phi_n}_{2}^2 +\omega \nm{  \phi_n}_{2}^2<2( m_\omega-\delta),$$ and $ \nm{\nabla_x \phi_n^\lambda }_2^2 =\lambda^{2} \nm{\nabla_x \phi_n }_2^2 \to0$ as $\lambda\to 0$, we may choose $\lambda_0>0$ small such that $\phi_n^{\lambda_0} \in A_{a,\delta} $ for $a\in \sbr{0,a_0}$.  Moreover, since $\alpha>4/d$,  it follows from \eqref{defi of E lambda} that $ S_\omega(\phi_n^{\lambda }) \to -\infty$ as $\lambda\to +\infty$. Thus, we may find $\lambda_1>0$  large enough such that $S_\omega(\phi_n^{\lambda_1}) <0$. Define 
		\begin{equation}
			\gamma_n(s):=\phi_n^{s\lambda_1+(1-s)\lambda_0 }, \quad  s\in \mbr{0,1}.
		\end{equation}
		Then $\gamma_n(s)\in \Gamma_\omega$. Therefore,  since $Q(\phi_n)=0$, we deduce from Lemma \ref{lemma property Q} that  
		\begin{equation}
			\sigma_\omega \leq \max_{s\in \mbr{0,1}} S_\omega(\gamma_n(s)) \leq S_\omega(\phi_n^1) =S_\omega(\phi_n)=m_\omega+o_n(1).
		\end{equation}
		Letting $n\to \infty$, we obtain that $		\sigma_\omega  \leq m_\omega$. 
		\medbreak
		Next, we  prove the inverse inequality. For any $\gamma\in \Gamma_\omega$, we have $\gamma(0)\in A_{a,\delta}$. Hence, by Lemma \ref{lemma Adelta Q}, we know that $Q(\gamma(0))>0$. We now prove that $Q(\gamma(1))<0$. By contradiction, we assume that $Q(\gamma(1))\geq 0$. Then we have 	 
		\begin{equation}
		 S_\omega(\gamma(1))\geq \frac{1}{2}  \nm{\nabla_x  \gamma(1) }_2^2 -\frac{1}{\alpha+2}   \nm{ \gamma(1)}_{\alpha+2}^{\alpha+2} \geq \sbr{\frac{1}{2} -\frac{2}{\alpha d}} \nm{\nabla_x  \gamma(1) }_2^2 \geq 0 .
		\end{equation}  
	However,  by the definition of $\Gamma_\omega$, we know that  $	 S_\omega(\gamma(1))<0$,	which is a contradiction. Thus,  there holds $Q(\gamma(1))<0$. By the continuity of $\gamma$, there exists $s_0\in \sbr{0,1}$ such that $Q(\gamma(s_0))=0$. Therefore, we have 
		\begin{equation}
			m_\omega\leq  S_\omega( \gamma(s_0))\leq \max_{s\in \mbr{0,1}}  S_\omega( \gamma(s)),
		\end{equation}
		from which we obtain $	m_\omega\leq 	\sigma_\omega$. Hence, we finish the proof of  $	m_\omega=	\sigma_\omega$.
		\medbreak
		Finally, 
			by using  the definition of $  A_{a,\delta}$ and the choice of $a$, we know that   for any $\phi\in A_{a,\delta}$, there holds
		\begin{equation}
			 S_\omega(\phi)\leq \frac{1}{2}  \nm{\nabla_x \phi }_2^2 +\frac{1}{2} \sbr{ \nm{\partial_y \phi }_2^2   +   \nm{ y\phi }_{2}^2+ \omega \nm{  \phi }_{2}^2 } \leq m_\omega-\delta +\frac{1}{2} a \leq m_\omega-\frac{1}{2}\delta .
		\end{equation}
		For any $\gamma\in \Gamma_\omega$, by using Lemma \ref{lemma bounded energy level} and the facts $\gamma(0)\in A_{a,\delta}$, $ S_\omega(\gamma(1))<0$,  we obtain
		\begin{equation}
			\sigma_\omega=	m_\omega>\max\lbr{ \sup_{\gamma\in\Gamma_\omega}  S_\omega(\gamma(0)), \sup_{\gamma\in\Gamma_\omega}  S_\omega(\gamma(1)) }.
		\end{equation}
		Hence, the functional $ S_\omega$ has a mountain pass structure   at level $m_\omega$.  This completes the proof.
	\end{proof}

		\subsection{The proof of Theorem \ref{thm groundstate}}
   This subsection is devoted to the proof of Theorem \ref{thm groundstate}. We begin with two lemmas.

 	\begin{lemma}\label{lemma minimizerexistence}
 	For any $\omega>0$,  the minimization problem $m_\omega$ admits a non-negative minimizer   $\phi_\omega\in \Sigma$.
 \end{lemma}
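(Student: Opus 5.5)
We take a minimizing sequence and pass to the limit, using the relaxed problem $\tilde m_\omega$ of Lemma \ref{lemma Le Coz} so that we only need lower semicontinuity of $I_\omega$ and an inequality $Q\le 0$ in the limit. The compactness in $y$ comes from the harmonic confinement. The loss of compactness in $x$ (translations) will be handled by recentering in $x$.

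Let $(\phi_n)_n\subset K$ with $S_\omega(\phi_n)\to m_\omega$. Replacing $\phi_n$ by $|\phi_n|$ keeps $Q$ and $S_\omega$ unchanged or decreases $\|\nabla_x\cdot\|_2$ and $\|\partial_y\cdot\|_2$, by the diamagnetic inequality. If $Q(|\phi_n|)<0$, we rescale with Lemma \ref{lemma property Q}(ii)--(iii) to some $\lambda_n\le 1$ and use $I_\omega(|\phi_n|^{\lambda_n})\le I_\omega(\phi_n)$. So we may assume $\phi_n\ge 0$.

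By \eqref{se1} the sequence is bounded in $\Sigma$, and by \eqref{lq3} we have $\liminf_n\|\phi_n\|_{\alpha+2}^{\alpha+2}>0$. Since $\alpha+2$ lies strictly between $2$ and the $(d+1)$-dimensional critical exponent, a Lions-type lemma in the $x$ variable applies to sequences bounded in $\Sigma$. This lemma says that if $\sup_{x_0}\int_{B(x_0,1)\times\R}|\phi_n|^2\,dz\to0$, then $\|\phi_n\|_{\alpha+2}\to0$. We prove it by covering $\R^d$ by unit cubes $Q_k$ and applying on each slab $Q_k\times\R$ the local Gagliardo–Nirenberg inequality $\|f\|_{L^{\alpha+2}(Q_k\times\R)}^{\alpha+2}\lesssim\|f\|_{L^2(Q_k\times\R)}^{\theta}\|f\|_{H^1(Q_k\times\R)}^{2}$, then summing. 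Hence there exist $x_n\in\R^d$ with $\int_{B(x_n,1)\times\R}|\phi_n|^2\ge c>0$.

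Every functional involved is invariant under $x$-translations, so we replace $\phi_n$ by $\phi_n(\cdot+x_n,\cdot)$. Then $\phi_n\rightharpoonup\phi\ge0$ weakly in $\Sigma$. The weight $y^2$ together with $H^1$ gives compact embedding of $\Sigma$ into $L^2(B(0,1)\times\R)$, since tails in $y$ are controlled by $\|y\phi_n\|_2$. Therefore $\phi\ne0$.

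The main obstacle is to show $Q(\phi)\le0$. We use the Brezis–Lieb splitting with $r_n=\phi_n-\phi$:
\[
Q(\phi_n)=Q(\phi)+Q(r_n)+o_n(1),\qquad I_\omega(\phi_n)=I_\omega(\phi)+I_\omega(r_n)+o_n(1).
\]
Suppose $Q(\phi)>0$. Then $Q(r_n)<0$ for large $n$. Lemma \ref{lemma Le Coz} gives $I_\omega(r_n)\ge \tilde m_\omega=m_\omega$, so
\[
m_\omega+o_n(1)=I_\omega(\phi_n)\ge I_\omega(\phi)+m_\omega+o_n(1).
\]
This contradicts $I_\omega(\phi)>0$. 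Hence $Q(\phi)\le0$.

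Weak lower semicontinuity of the quadratic functional $I_\omega$ then gives $I_\omega(\phi)\le m_\omega$, so $\phi$ attains $\tilde m_\omega$. If $Q(\phi)<0$, we rescale to $\phi^{\lambda_\star}$ with $\lambda_\star<1$, which strictly lowers the $\nabla_x$ part of $I_\omega$ because $\frac12-\frac2{\alpha d}>0$. This gives $I_\omega(\phi^{\lambda_\star})<\tilde m_\omega$, a contradiction. Therefore $Q(\phi)=0$ and $S_\omega(\phi)=I_\omega(\phi)=m_\omega$, so $\phi_\omega:=\phi$ is a non-negative minimizer.
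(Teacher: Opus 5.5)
Your argument is correct and follows the paper's proof essentially step for step: a nonnegative minimizing sequence in $K$, boundedness and non-vanishing from \eqref{se1}--\eqref{lq3}, recentering in $x$, a Brezis--Lieb splitting to exclude $Q(\phi)>0$, and the rescaling of Lemma \ref{lemma property Q} to exclude $Q(\phi)<0$. The differences are minor: you prove by hand the compactness lemma the paper cites from \cite{Bellazzini=CMP2017}, you handle the possibility $Q(|\phi_n|)<0$ explicitly (the paper only asserts stability under $\varphi\mapsto|\varphi|$), and you rule out $Q(\phi)>0$ by applying $\tilde m_\omega$ directly to $r_n$ instead of first rescaling it.

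One clarification is needed in your Lions-type lemma. The slab inequality has to be read with $\theta=\frac{4-(d-1)\alpha}{2}$ and a constant depending on $\sup_n\|\phi_n\|_{H^1}$. The natural exponent on $\|f\|_{H^1(Q_k\times\R)}$ is $\frac{(d+1)\alpha}{2}>2$, and a homogeneous version with exponent exactly $2$ fails once $\alpha>\frac{4}{d+1}$ (concentrating bumps violate it).
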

	\begin{proof}
			Let $\sbr{\varphi_n}_n\subset K$ be a minimizing sequence  for  $m_\omega$, i.e., 
		\begin{equation}
			\lim_{n\to\infty} S_\omega(\varphi_n)=m_\omega\quad \text{and} \quad Q(\varphi_n)= 0, \quad \forall n\in \N.
		\end{equation}  By diamagnetic inequality we
		know that the variational problem  $m_\omega$ is stable under the mapping  $\varphi \mapsto |\varphi|$, thus we may assume
		that  $\varphi_n\geq 0$ for all $n\in \N$.
		Following the same arguments in  Lemma \ref{lemma bounded energy level},  the sequence   $\sbr{\varphi_n}_n$ is bounded in $\Sigma$.  Moreover,  it satisfies   the non-vanishing property:
		\begin{equation}
			\liminf_{n\to\infty} \nm{\varphi_n}_{\alpha+2}^{\alpha+2}>0.
		\end{equation} 
	Then by	   \cite[Lemma 3.4]{Bellazzini=CMP2017},  there exists a sequence $ \sbr{x_n}_n\in \R^{d}$ and  $ \phi_\omega\in \Sigma\setminus\lbr{0}$ such that 
		\begin{equation}
			\phi_{n } (x,y):=\varphi_n(x-x_n,y)\rightharpoonup \phi_\omega \quad \text{ weakly in } ~\Sigma.
		\end{equation} 
		Obviously, we have 
	\begin{equation}
		\lim_{n\to\infty} S_\omega(\phi_n)=m_\omega\quad \text{and} \quad Q(\phi_n)= 0, \quad \forall n\in \N.
	\end{equation}
 By the    weak lower semi-continuity of norms and the definition of $I_\omega$ in \eqref{defi of I}, we deduce from   Lemma \ref{lemma Le Coz} that 
	\begin{equation}\label{lqv1}
		I_\omega(\phi_\omega )\leq   \liminf_{n \to \infty} I_\omega(\phi_n)=\liminf_{n \to \infty} S_\omega(\phi_n)=m_\omega=\tilde m_\omega.
	\end{equation}
	 	Now we prove  that $Q(\phi_\omega)=0$. We divide the proof into two cases:
	 	\vskip0.12in
	 	{\it Case 1.}  If the case $Q(\phi_\omega) < 0$ happens,  we deduce from   Lemma \ref{lemma property Q} that  there exists $\lambda_\star \in \sbr{0,1}$ such that $Q( \phi_\omega^{\lambda_\star }) =0$. Then  by \eqref{lqv1}, we have 
	 	\begin{equation}
	 		\begin{aligned}
	 			\tilde m_\omega\leq  I_\omega(\phi_\omega^{\lambda_\star} )< I_\omega(\phi_\omega)\leq  	\tilde m_\omega,
	 		\end{aligned}
	 	\end{equation}
	 	which is a contradiction.  Therefore, Case 1 is impossible.
	 	
	 	\vskip0.12in
	 	{\it Case 2.} If the case $Q(\phi_\omega) >0$ happens, we consider  $w_n:=\phi_n-\phi_\omega$. By using the Br\'ezis-Lieb lemma \cite{Brezis-Lieb lemma}, a direct calculation shows that 
	 	\begin{equation}\label{p6}
	 		\begin{aligned}
	 			&	Q(w_n)= Q(\phi_n  )-Q(\phi_\omega)+o_n(1) ,\\
	 			&  I_\omega(w_n)=I_\omega(\phi_n)-I_\omega(\phi_\omega)+o_n(1) ,
	 		\end{aligned}
	 	\end{equation}
	 	Therefore, for $n$ large enough, we have $Q(w_n)<0 $. By   Lemma \ref{lemma property Q}, there exists $s_n \in (0,1)$ such that $Q(w_n^{s_n} )=0$.  Hence,  we deduce from   \eqref{lqv1}, \eqref{p6}   that 
	 	\begin{equation}
	 		\begin{aligned}
	 			\tilde		m_\omega  \leq  I_\omega(w_n^{s_n}  ) < I_\omega(w_n)= I_\omega(\phi_n)-I_\omega(\phi_\omega)+o_n(1)\leq\tilde  m_\omega-I_\omega(\phi_\omega)+o_n(1). 
	 		\end{aligned} 
	 	\end{equation}
	 	This implies that $I_\omega(\phi_\omega)=0$, which is absurd. Therefore, Case 2 is impossible.
	 	\medbreak
	 	Since Cases 1 and 2 are both impossible, we have   $Q(\phi_\omega)=0$. Therefore, we obtain from \eqref{lqv1} that 
	 	\begin{equation}
	 		m_\omega\leq S_\omega(\phi_\omega )=I_\omega(\phi_\omega )\leq   m_\omega.
	 	\end{equation}
	 Thus $\phi_\omega$  is a non-negative minimizer of $m_\omega$. This completes the proof.
		\end{proof}

	\begin{lemma}\label{lemma solution}
	  Let   $\phi_\omega$ be the  minimizer as in  Lemma \ref{lemma minimizerexistence}. Then $\phi_\omega$ is a positive solution to  \eqref{equ elliptic}.
	\end{lemma}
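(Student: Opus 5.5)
The plan is to show that $\phi_\omega$ is a free critical point of $S_\omega$ on $\Sigma$, i.e.\ $S_\omega'(\phi_\omega)=0$ in $\Sigma^*$, by a deformation argument. Using the constraint $Q=0$ directly would produce a Lagrange multiplier that is hard to eliminate, so I will avoid it. Instead I will use the identity $m_\omega=\sigma_\omega$ from Lemma \ref{lemma moutainpass structure}. Positivity will then follow from elliptic theory.

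First, I record that $S_\omega\in C^1(\Sigma,\R)$. The exponent $\alpha+2$ lies below the $(d+1)$-dimensional energy-critical exponent $\frac{2(d+1)}{d-1}$, so $\phi\mapsto\|\phi\|_{\alpha+2}^{\alpha+2}$ is $C^1$ on $H^1(\R^{d+1})\supset\Sigma$.

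Suppose, by contradiction, that $S_\omega'(\phi_\omega)\neq 0$. By continuity there are $\rho>0$ and $\mu>0$ with $\|S_\omega'(v)\|_{\Sigma^*}\geq\mu$ for all $v\in B_{3\rho}(\phi_\omega)\subset\Sigma$.

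Next I build a competitor path through $\phi_\omega$. As in the proof of Lemma \ref{lemma moutainpass structure}, choose $\lambda_0$ small and $\lambda_1$ large, and set $\gamma(s)=\phi_\omega^{s\lambda_1+(1-s)\lambda_0}$. Since $Q(\phi_\omega)=0$, the bound $\frac12(\|\partial_y\phi_\omega\|_2^2+\|y\phi_\omega\|_2^2+\omega\|\phi_\omega\|_2^2)\leq m_\omega-2\delta$ comes from $I_\omega(\phi_\omega)=m_\omega$ together with the lower bound on $\|\nabla_x\phi_\omega\|_2^2$ used there. This gives $\gamma(0)\in A_{a,\delta}$, and also $S_\omega(\gamma(1))<0$, so $\gamma\in\Gamma_\omega$. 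By Lemma \ref{lemma property Q}(ii)--(iv), with $\lambda_\star(\phi_\omega)=1$, the map $s\mapsto S_\omega(\gamma(s))$ attains its maximum $m_\omega$ only at the $s_\star$ with $\lambda=1$. Hence, by compactness of $[0,1]$ and continuity of $\lambda\mapsto\phi_\omega^\lambda$ in $\Sigma$, there is $\varepsilon>0$ such that $S_\omega(\gamma(s))\leq m_\omega-2\varepsilon$ whenever $\gamma(s)\notin B_\rho(\phi_\omega)$.

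Now I apply the quantitative deformation lemma (Willem's version, on the Hilbert space $\Sigma$, using a locally Lipschitz pseudo-gradient field). With $\varepsilon$ shrunk so that $\varepsilon<\min\{\mu\rho/8,\ \delta/4\}$, it yields $\eta\in C(\Sigma,\Sigma)$ with the following properties:
\begin{itemize}
\item $\eta(v)=v$ outside $S_\omega^{-1}([m_\omega-2\varepsilon,m_\omega+2\varepsilon])\cap B_{2\rho}(\phi_\omega)$;
\item $S_\omega(\eta(v))\leq S_\omega(v)$ for every $v$;
\item $\eta$ maps $\{S_\omega\leq m_\omega+\varepsilon\}\cap B_\rho(\phi_\omega)$ into $\{S_\omega\leq m_\omega-\varepsilon\}$.
\end{itemize}
The endpoints $\gamma(0)$ and $\gamma(1)$ have energy at most $m_\omega-\delta/2<m_\omega-2\varepsilon$, so they are fixed by $\eta$, and therefore $\eta\circ\gamma\in\Gamma_\omega$. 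Moreover $\max_s S_\omega(\eta(\gamma(s)))\leq m_\omega-\varepsilon$, which contradicts $\sigma_\omega=m_\omega$. Hence $S_\omega'(\phi_\omega)=0$, i.e.\ $\phi_\omega$ is a weak solution of \eqref{equ elliptic}.

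I expect the main obstacle to be this deformation step. The delicate points are checking that the endpoints of the path stay in $A_{a,\delta}$ and keep negative energy after deformation, so that $\eta\circ\gamma$ really lies in $\Gamma_\omega$, and arranging the strict gap $\varepsilon$ away from $\phi_\omega$. Both rest on the fact that $\lambda\mapsto S_\omega(\phi_\omega^\lambda)$ has a unique strict maximum at $\lambda=1$, which is where Lemma \ref{lemma property Q}(iv) is essential.

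Finally, for positivity, $\phi_\omega\geq0$ is nontrivial and solves $-\Delta_z\phi_\omega+(y^2+\omega)\phi_\omega=\phi_\omega^{\alpha+1}\geq0$. Standard elliptic regularity (bootstrapping in $L^p$, using that $\alpha$ is energy-subcritical in $\R^{d+1}$) gives $\phi_\omega\in C^{1}_{loc}$ or at least $W^{2,p}_{loc}$, with locally bounded potential $y^2+\omega$. The strong maximum principle (or Harnack's inequality) applied on every ball then gives $\phi_\omega>0$ on $\R^{d+1}$.
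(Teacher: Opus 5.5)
Your proposal is correct and follows essentially the paper's proof. You assume $S_\omega'(\phi_\omega)\neq 0$, deform the fiber path $\gamma(s)=\phi_\omega^{s\lambda_1+(1-s)\lambda_0}\in\Gamma_\omega$ (whose energy peaks only at $\phi_\omega$ by Lemma \ref{lemma property Q}) below level $m_\omega$ while keeping its endpoints fixed, contradict $\sigma_\omega=m_\omega$ from Lemma \ref{lemma moutainpass structure}, and then get positivity from the maximum principle. The only differences are cosmetic: you invoke Willem's quantitative deformation lemma with a uniform compactness gap away from $B_\rho(\phi_\omega)$, whereas the paper builds the cut-off pseudo-gradient flow by hand and uses a pointwise strict inequality plus continuity.
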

	\begin{proof}
	  Indeed, the statement that  $\phi_\omega$ solves \eqref{equ elliptic}  is equivalent to the condition $ S_\omega'(\phi_\omega)=0$.  We proceed by contradiction. Suppose instead that $\nm{   S_\omega'(\phi_\omega)}_\star\neq 0$, where 
	  	\begin{equation}
	  \nm{  S_\omega'(\phi_\omega)}_\star =\sup_{\psi\in  \Sigma, ~\nm{ \psi}_{\Sigma}\leq 1 }	|\hbr{ S_\omega'(\phi_\omega),\psi }   |.
	  \end{equation} Then there exist constants $\rho>0$ and $\kappa>0$ such that 
		\begin{equation}\label{l1}
			\nm{ S_\omega'(\phi)}_\star\geq \kappa, \quad \text{ for any }\phi \in B_{3\rho}(\phi_\omega),
		\end{equation}
		where $B_{3\rho}(\phi_\omega):=\lbr{ \phi \in \Sigma: \nm{ \phi-\phi_\omega}_{\Sigma}\leq 3\rho } $. Let $a$ and $\delta$ be the constants given by Lemma \ref{lemma moutainpass structure} and we may further assume that $0\leq \delta\leq \frac{d}{2}-\frac{2}{\alpha}$. Then we define 
		\begin{equation}\label{se6}
			\begin{aligned}
				&\delta_1:= \frac{1}{4}\sbr{ m(c)-\max\lbr{ \sup_{\gamma\in\Gamma_\omega} S_\omega(\gamma(0)), \sup_{\gamma\in\Gamma_\omega} S_\omega(\gamma(1)) }},\\
				&\delta_2:=\min\lbr{\delta_1, \frac{\rho\kappa}{4}, \frac{m_\omega }{4}} .
			\end{aligned}
		\end{equation}
		Consider  the function $  L: \Sigma\to [0,\rho] $ given by 
		\begin{equation}\label{se4}
			L(\phi)=\frac{\rho ~\dist{\phi,\Sigma\setminus S_1}}{\dist{\phi,\Sigma\setminus S_1}+\dist{\phi,S_2}}\geq 0,
		\end{equation}
		where
		\begin{align*}
			S_1 &:= \Sigma \cap S_\omega^{-1}\big([m_\omega - 2\delta_2, m_\omega + 2\delta_2]\big), \\
			S_2 &:= B_{3\rho}(\phi_\omega) \cap S_\omega^{-1}\big([m_\omega - \delta_2, m_\omega  + \delta_2]\big).
		\end{align*} 
		Let 
		\begin{equation}
			\widetilde{H}_\omega:=\lbr{ \phi\in \Sigma: ~ S_\omega'(\phi)\neq 0}.
		\end{equation}
	Then there exists a locally Lipschitz pseudo gradient vector field  $X: \widetilde{H}_\omega \to   \Sigma $ such that  for any $\phi \in \widetilde{H}_\omega$, we have  $X(\phi) \in \Sigma $ and 
		\begin{equation}\label{se3}
			\nm{ X(\phi)}_{\Sigma}\leq 2\nm{  S_\omega'(\phi)}_\star \quad \text{ and } \quad 	\hbr{ S_\omega'(\phi),X(\phi)  } \geq \nm{ S_\omega'(\phi)}_\star^2.
		\end{equation}	
		We now define the vector field $Y: \Sigma \to \Sigma$ by
		\begin{equation}
			Y(\phi)=\begin{cases}
				-	L(\phi) \nm{X(\phi)}_{\Sigma}^{-1} X(\phi) , \quad &\text{ if } \phi \in \widetilde{H}_\omega,\\
				0, \quad &\text{ if } \phi \in \Sigma\setminus\widetilde{H}_\omega,
			\end{cases}
		\end{equation}
		which is   locally Lipschitz continuous.  Then the  problem 
		\begin{equation}\label{se2}
			\frac{	d}{d \tau} \eta(\tau,\phi)=Y(\eta(\tau,\phi)), \quad \eta(0,\phi)=\phi, \quad \forall \phi \in \Sigma
		\end{equation}
		admits a unique solution $\eta:\R\times\Sigma  \to \Sigma $. We now verify the following properties of $\eta$:
		\begin{itemize}
			\medbreak
			\item[(i)]  $\eta(\tau,\phi)= \phi$ for all $\tau\in \R$ if $\phi\in S_3:= \Sigma  \setminus S_\omega^{-1}\sbr{\mbr{m_\omega-2\delta_2, m_\omega+2\delta_2}}$; 
			\medbreak\item[(ii)] $S_\omega(\eta(\tau,\phi))\leq S_\omega(  \phi)$ for all $\phi\in \Sigma $ and $\tau\in \mbr{0,1}$;
			\medbreak
			\item[(iii)] $\eta (1, S_\omega^{m_\omega+\delta_2}\cap B_\rho(\phi_\omega)) \subset S_\omega^{m_\omega-\delta_2}$. 	\medbreak
		\end{itemize}
		Here, $S_\omega^M$ denotes 
		the set $ S_\omega^M:=\lbr{ \phi\in \Sigma:S_\omega(\phi)\leq M}$. 
		 
		\vskip 0.1in
		{\it Proof of (i)}.
		For any $ \phi\in  S_3$,  we have $L(\phi)=0$ , and thus $Y(\phi) = 0$.   By \eqref{se2}, it follows that $$ \frac{	d}{d \tau} \eta(\tau,\phi)|_{\tau=0}=Y(\eta(0,\phi))=Y(\phi)=0,$$ which implies  $\eta(\tau,\phi)=\eta(0,\phi)=\phi$.
		\vskip 0.1in
		{\it Proof of (ii)}.  For any $ \phi\in \Sigma$, from   \eqref{se4} and \eqref{se3}, a direct calculation shows that
		\begin{equation}\label{se5}
			\begin{aligned}
				S_\omega(\eta(\tau,\phi))&= S_\omega(\eta(0,\phi))+\int_{0}^{\tau} \frac{	d}{d s} S_\omega\sbr{\eta(s,\phi)} ~\mathrm{d} s\\
				&=S_\omega(\phi)+\int_{0}^{\tau}\hbr{S_\omega'\sbr{\eta(s,\phi)}, Y(\eta(s,\phi))}~\mathrm{d} s\\
				&=S_\omega(\phi)-\int_{s\in\mbr{0,\tau},\eta(s,\phi)\in 	\widetilde{H}_\omega}\hbr{S_\omega'\sbr{\eta(s,\phi)}, 	L(\eta(s,\phi)) \nm{X(\eta(s,\phi))}_{\Sigma}^{-1} X(\eta(s,\phi))  }~\mathrm{d} s\\
				&\leq S_\omega(\phi)-\frac{1}{2}\int_{s\in\mbr{0,\tau},\eta(s,\phi)\in 	\widetilde{H}_\omega}L(\eta(s,\phi))\nm{ S_\omega'(  \eta(s,\phi))}_\star~\mathrm{d} s\\
				&\leq S_\omega(\phi). 
			\end{aligned}
		\end{equation}

		\medbreak
		{\it Proof of (iii)}.  Let  $\phi \in S_\omega^{m_\omega+\delta_2}\cap B_\rho(\phi_\omega)$. Then  
		\begin{equation}\label{l2}
			S_\omega(\phi)\leq m_\omega+\delta_2,\quad \text{ and } \quad \nm{\phi- \phi_\omega}_{\Sigma} \leq \rho.
		\end{equation}
		If $ S_\omega(\phi) <m_\omega-\delta_2$, then by (ii),  we have 
		\begin{equation}
			S_\omega(\eta(1,\phi))\leq S_\omega(\phi)<  m_\omega-\delta_2,
		\end{equation}
		so  $	S_\omega(\eta(1,\phi)) \in S_\omega^{m_\omega-\delta_2}$. Now we assume that $ S_\omega(\phi) \in  [m_\omega-\delta_2,m_\omega+\delta_2 ] $. Then   we deduce from (ii)  that
		\begin{equation}
			S_\omega(\eta(\tau,\phi))\leq S_\omega(\phi)<  m_\omega+\delta_2, \quad \forall \tau\in \mbr{0,1}.
		\end{equation}
		Moreover, for all $\tau\in [0,1]$,  we have the estimate:
		\begin{equation}
			\begin{aligned}
				\nm{ \eta(\tau,\phi)-\phi}_{\Sigma} &=\nm{\int_{0}^\tau Y(\eta(s,\phi)) ~\mathrm{d}s}_{\Sigma} \leq \int_{0}^\tau L(\eta(s,\phi))~\mathrm{d}s\leq \rho,  \quad \forall \tau\in \mbr{0,1}.
			\end{aligned}
		\end{equation}
		where we used the fact $0\leq L(\cdot)\leq \rho$. Combining this with \eqref{l2}, we obtain    $\eta(\tau,\phi)\in B_{2\rho}(\phi_\omega)$ for all $\tau\in [0,1]$. It follows from \eqref{l1} that  
		\begin{equation}\label{l3}
			\nm{  S_\omega'(\eta(\tau,\phi))}_\star\geq \kappa, \quad \forall \tau\in \mbr{0,1}.
		\end{equation}
		Define $$\mathcal{E}:=\lbr{\tau\in [0,1]: m_\omega-\delta_2\leq 	S_\omega(\eta(\tau,\phi)) \leq m_\omega+\delta_2}\neq \emptyset.$$ For any $\tau\in\mathcal{E} $, we know that $\eta(\tau,\phi) \in S_2$, and consequently  $L(\eta(\tau,\phi))=\rho$. Combining this with   \eqref{se6}, \eqref{se5} and \eqref{l3},  we conclude that 
		\begin{equation} 
			\begin{aligned}
				S_\omega(\eta(1,\phi))&\leq S_\omega(\phi)-\frac{1}{2}\int_{s\in\mathcal{E},~\eta(s,\phi)\in \widetilde{H}_\omega}L(\eta(s,\phi))\nm{  S_\omega'(  \eta(s,\phi))}_\star~\mathrm{d} s\\
				& \leq m_\omega+\delta_2-\frac{\rho \kappa}{2} \leq  m_\omega-\delta_2.
			\end{aligned}
		\end{equation} 
		Hence, $	S_\omega(\eta(1,\phi)) \in S_\omega^{m(c)-\delta_2}$. The proof of (iii) is complete.
		\medbreak
		Let 
		\begin{equation}
			\gamma(s):=\phi_\omega^{s\lambda_1+(1-s)\lambda_0 }, \quad  s\in \mbr{0,1}.
		\end{equation}
		We aim to find  there exist $ \lambda_0, \lambda_1\geq 0$ such that $\gamma\in \Gamma_\omega$. Similar to the proof of  Lemma \ref{lemma moutainpass structure}, we only need to show that  
		\begin{equation}\label{p1}
			\nm{\partial_y \phi_\omega}_2^2+\nm{ y\phi_\omega }_{2}^2+\omega\nm{  \phi_\omega }_{2}^2<2(m_\omega-\delta).
		\end{equation}
		Indeed, recall that $S_\omega(\phi_\omega)=m_\omega$ and $Q(\phi_\omega)=0$, we have 
		\begin{equation}
			\begin{aligned}
				m_\omega&=S_\omega(\phi_\omega)= S_\omega(\phi_\omega)-\frac{2}{\alpha d}Q(\phi_\omega)\\&= \frac{1}{2} \sbr{ \nm{ \partial_y \phi_\omega}_{2}^2 +   \nm{ y\phi_\omega}_{2}^2 +\omega\nm{  \phi_\omega }_{2}^2 }+\sbr{\frac{1}{2}-\frac{2}{\alpha d}} \nm{ \nabla_x \phi_\omega}_2^2 .
			\end{aligned}
		\end{equation}
	Similar to \eqref{l4}, we deduce from \eqref{defi of delta} that  $\sbr{\frac{1}{2}-\frac{2}{\alpha d}} 	\nm{\nabla_x \phi_\omega}_2^2 \geq \zeta>\delta$, which directly implies \eqref{p1}.
	Since $ Q(\phi_\omega)=0$, it follows from  Lemma \ref{lemma property Q}  that  $S_\omega(\gamma(s))\leq S_\omega(\phi_\omega)=m_\omega$ for all $s\in \mbr{0,1}$, from which we get $\gamma(s)\in S_\omega^{m_\omega}  $.  We now consider three cases:
		\begin{itemize}
			\vskip0.1in
			\item[Case 1:]   If $\gamma(s) \in \Sigma\setminus B_\rho(\phi_\omega)$, then by (ii), we have 
			\begin{equation}
				S_\omega(\eta(1,\gamma(s))) \leq 	S_\omega( \gamma(s)) <  S_\omega(\phi_\omega)=   m_\omega.
			\end{equation} \vskip0.1in
			\item[Case 2:] If $\gamma(s) \in     S_\omega^{m_\omega-\delta_2}$, then by (ii), we have 
			\begin{equation}
				S_\omega(\eta(1,\gamma(s))) \leq 	S_\omega( \gamma(s)) \leq 	m_\omega-\delta_2<m_\omega. 
			\end{equation}\vskip0.1in
			\item[Case 3:] If $\gamma(s) \in   B_\rho(\phi_\omega) \cap   S_\omega^{-1} \sbr{[m_\omega- \delta_2, m_\omega ]} $, then by (iii), we have 
			\begin{equation}
				S_\omega(\eta(1,\gamma(s))) \leq 	m_\omega-\delta_2<m_\omega. 
			\end{equation} 
		\end{itemize} 
		In summary, we know that
		\begin{equation}\label{q1}
			\max_{s\in \mbr{0,1}}	S_\omega(\eta(1,\gamma(s)))<m_\omega.
		\end{equation}
		Finally, we claim that
		\begin{equation}\label{q2}
			\eta(1,\gamma(s))\in \Gamma_\omega.
		\end{equation}
		Once this claim is proved, we are done. Indeed, 	combining \eqref{q1} and \eqref{q2} with Lemma \ref{lemma moutainpass structure}, we arrive at
		\begin{equation}
			\gamma_\omega \leq 	\max_{s\in \mbr{0,1}}	S_\omega(\eta(1,\gamma(s)))<m_\omega=\gamma_\omega,
		\end{equation}
		which is a contradiction. 	
		To prove \eqref{q2}, we first 
		observe that   the choice of $\delta_2$ ensures that $m_\omega-2\delta_2\geq  {m_\omega}/{2}>0$. Then  $ S_\omega(\gamma(1))<0<m_\omega-2\delta_2$, it follows that $  \gamma(1) \in S_3$. Using conclusion (i), we  deduce that  $S_\omega(\eta(1,\gamma(1)))=S_\omega(\gamma(1))<0$. On the other hand, the selection of $\delta_1$ and $\delta_2$ yields
		\begin{equation}
		 S_\omega	(\gamma(0))\leq m_\omega-4\delta_1\leq  m_\omega-4\delta_2< m(c)-2\delta_2,
		\end{equation}
		which implies that $ \gamma(0)\in S_3$.  Using (i) again, we obtain that $\eta(1,\gamma(0))=\gamma(0)\in  A_{a,\delta}$. This establishes \eqref{q2}.		
		We have thus shown that $\phi_\omega$ is a solution to \eqref{equ elliptic}.  The positivity of $\phi_\omega$ follows directly from the  maximum principle. This completes the  proof.
	\end{proof}

\begin{proof}[Proof of Theorem \ref{thm groundstate}]
    The proof of Theorem \ref{thm groundstate}   is just a combinition of Lemmas \ref{lemma minimizerexistence} and \ref{lemma solution}.
\end{proof}

\section{Interaction Morawetz estimate}
In this section, we devote to prove the following estimate, which plays key role in the proof of Theorem \ref{thm:main}. Here we choose the multiplier that is only related to  $x$-direction, which differs from that in \cite{DM-MRL}. Additionally, as mentioned in the introduction, the coercivity estimates required for the proof (Lemma \ref{lem coercivity}) rely heavily on the variational results we established earlier, rather than being a simple consequence of the Gagliardo-Nirenberg inequality.  Let $\eta>0$ be a small constant and let $\chi:\R^d\to \R^+\cup\lbr{0}$ be a  radial decreasing function
  defined by
\begin{equation}\label{defchi}
\chi(x)=\left\{\begin{array}{ll}
1, & |x| \leq 1-\eta, \\
0, & |x|>1,
\end{array} \right. 
\end{equation}
and $\chi_R(x)=\chi(x / R)$ for any $R>0$. 
\begin{theorem}\label{thm5.1}
  Let $u$ be a  solution to \eqref{PHNLS} with  initial data  $u_0\in P_{\omega}^{+}$.
        Then for $\varepsilon>0$ sufficiently small, there exist $T_0=T_0(\varepsilon,u), J_0=J_0(\varepsilon,u), R_0=R_0\left(\varepsilon, u\right)$ sufficiently large, and $\eta=\eta(\varepsilon)>0$ sufficiently small such that for any $a \in \mathbb{R}$,
        \begin{equation}\label{thm5.1e}
            \begin{aligned}
    &\frac{1}{{J_0}T_0}\int_a^{a+T_0}\int_{R_0}^{R_0e^{J_0}}\frac{1}{R^d}\iiint \big|\chi_R(x_2-s)u(t,z_2)\big|^2\times\\
    &\hspace{20ex}\times\big|\nabla_x(\chi_R(x_1-s)u^\xi(t,z_1))\big|^2\,dz_1\,dz_2\,ds\frac{dR}{R}\,dt\lesssim\varepsilon,
\end{aligned}
        \end{equation} 
where $u^{\xi}(t, z)=e^{i x \cdot \xi} u(t, z)$ with $\xi = \xi(t, s, R) \in \mathbb{R}^d$ and 
\begin{equation*}
\xi=\xi(t,s,R)=\left\{
\begin{array}{rr}
             -\frac{\mathrm{Im} \int \chi_R^2(x-s)\bar{u}(t,z)\nabla_x u(t,z)\,dz}{\int \chi_R^2(x-s)|u(t,z)|^2\,dz}, &
             \text{if }\int \chi_R^2(x-s)|u(t,z)|^2\,dz\neq 0,\\
             0, &\text{if }\int \chi_R^2(x-s)|u(t,z)|^2\,dz=0.
             \end{array}
\right.
\end{equation*}
Henceforth we abbreviate $\int_{\mathbb{R}^{d+1}}f\,dz$ by $\int f\,dz$ and $\int_{\mathbb{R}^{d}}g \,ds$ by $\int g\,ds$. 
\end{theorem}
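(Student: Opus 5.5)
Following Dodson--Murphy \cite{DM-MRL}, I would derive \eqref{thm5.1e} from an interaction Morawetz functional built from a multiplier acting only in the $x$-direction. Let $\phi(x)=\frac{1}{\omega_d R^d}\int\chi_R^2(x-s)\chi_R^2(s)\,ds$ and $\psi(x)$ the radial function with $\nabla\psi(x)=\frac{x}{R^d}\int\chi_R^2(x-s)\chi_R^2(s)\,ds\cdot\frac{1}{\omega_d}\cdot(\text{normalized})$, so that $\psi\sim R$ scale and $\nabla\psi$ is bounded by $R$. Set
\begin{equation*}
M_R(t)=\iint |u(t,z_2)|^2\,\nabla\psi(x_1-x_2)\cdot \mathrm{Im}\big(\bar u\nabla_x u\big)(t,z_1)\,dz_1\,dz_2 .
\end{equation*}
Since $\psi$ depends only on $x$, the potential $y^2$ commutes with the multiplier and produces no term; the local conservation laws for mass and $x$-momentum (the latter holds because $\partial_{x_j}$ commutes with $-\partial_y^2+y^2$) give the same structure as in the free case. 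By Cauchy--Schwarz and $\|u\|_{L_t^\infty\Sigma}\lesssim1$ (which follows from $u_0\in P_\omega^+$, conservation laws and the invariance of $P_\omega^+$ under the flow, itself a consequence of Lemma \ref{lemma property Q} and the definition of $m_\omega$), $\sup_t|M_R(t)|\lesssim R$.

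Computing $\frac{d}{dt}M_R$ and rewriting the main terms using the Galilean shift $u^\xi$ with the chosen $\xi(t,s,R)$ (which kills the cross momentum terms exactly as in \cite{DM-MRL}), I expect
\begin{equation*}
\frac{d}{dt}M_R \geq \frac{c}{R^d}\int\!\!\iint|\chi_R(x_2-s)u(z_2)|^2\Big[\,|\nabla_x(\chi_R(x_1-s)u^\xi(z_1))|^2-\tfrac{\alpha d}{2(\alpha+2)}|\chi_R(x_1-s)u(z_1)|^{\alpha+2}\Big]dz_1dz_2\,ds-\mathcal{E}_R,
\end{equation*}
where $\mathcal{E}_R$ collects errors from derivatives of $\chi_R$ hitting $u$ and from the region $1-\eta\le|x|/R\le1$; these are bounded by $O(\eta)+O(R^{-2})$ times norms controlled by $\|u\|_{L^\infty_t\Sigma}$, plus a term $\lesssim \frac1{R^d}\int\ldots\|u\|_{L^{\alpha+2}(|x-s|\in[(1-\eta)R,R])}$ handled by averaging in $R$ over $[R_0,R_0e^{J_0}]$ with the logarithmic measure (this is why $J_0$ appears).

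The key step, and the main obstacle, is the coercivity: I need a Lemma (the paper's Lemma \ref{lem coercivity}) saying that if $u_0\in P_\omega^+$ then there is $\delta>0$ with $\sup_t S_\omega(u(t))\le(1-\delta)m_\omega$ and, for $R$ large and all $s,t$, $Q(\chi_R(\cdot-s)u^\xi(t))\ge c\,\|\nabla_x(\chi_R(\cdot-s)u^\xi)\|_2^2$. I would prove it via Lemma \ref{lemma property Q} and Lemma \ref{lemma Le Coz}: localization and Galilean multiplication only decrease $S_\omega$ up to $o_R(1)$ (the $\xi$ chosen minimizes the localized kinetic energy, and $\partial_y$, $y$-weight, mass are unaffected by $\chi_R(x)$, $e^{ix\xi}$), so $\chi_R u^\xi$ stays in $\{S_\omega<(1-\delta/2)m_\omega\}$; then if $Q(f)$ were $\le c\|\nabla_xf\|^2$ for small $c$, the function $I_\omega(f)=S_\omega(f)-\frac{2}{\alpha d}Q(f)$ would be close to $S_\omega(f)<m_\omega=\tilde m_\omega$, and scaling $f^\lambda$ to slightly negative $Q$ contradicts Lemma \ref{lemma Le Coz}. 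Since the nonlinear term $\frac{\alpha d}{2(\alpha+2)}\|f\|^{\alpha+2}_{\alpha+2}$ is the quantity in the bracket, this coercivity converts the bracket into $\gtrsim|\nabla_x(\chi_Ru^\xi)|^2$.

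Finally integrate over $t\in[a,a+T_0]$: the left side yields $T_0\cdot$(average of the desired quantity) minus $T_0(\eta+R^{-2})$ errors, bounded by $\sup|M_R|\lesssim R\le R_0e^{J_0}$. Averaging $\frac{dR}{R}$ over $[R_0,R_0e^{J_0}]$ and dividing by $J_0T_0$ gives the left side of \eqref{thm5.1e} $\lesssim \frac{R_0e^{J_0}}{J_0T_0}+\eta+\frac1{J_0}+R_0^{-2}$. Choosing $\eta=\varepsilon$, $J_0\sim\varepsilon^{-1}$, $R_0\sim\varepsilon^{-1}$, and $T_0\gg R_0e^{J_0}\varepsilon^{-1}$ makes each term $\lesssim\varepsilon$.
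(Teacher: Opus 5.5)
Your overall architecture matches the paper's: an $x$-only Dodson--Murphy interaction functional, vanishing of the $\partial_y^2$ and $y^2$ contributions after integrating in $y$, the Galilean shift with the optimal $\xi$, $\sup_t|M_R(t)|\lesssim R$, and averaging in $t$ and in $\frac{dR}{R}$. The gap is in the coercivity step, which you rightly call the crux.

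First, the claim that localization and the Galilean phase decrease $S_\omega$ up to $o_R(1)$ is not justified. You account only for the quadratic terms. But $S_\omega$ also contains $-\frac{1}{\alpha+2}\|f\|_{\alpha+2}^{\alpha+2}$, and since $\|\chi_R(\cdot-s)u\|_{\alpha+2}\le\|u\|_{\alpha+2}$, this term moves $S_\omega$ upward under cut-off. So no bound $S_\omega(\chi_Ru^\xi)\le S_\omega(u)+o_R(1)$ follows.

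Second, even granting $S_\omega(\chi_Ru^\xi)<(1-\delta/2)m_\omega$, you could not conclude $Q(\chi_Ru^\xi)\ge0$, because $\{S_\omega<m_\omega\}$ also contains $\mathcal{K}^-_\omega$. Your contradiction through Lemma \ref{lemma Le Coz} only covers the case where $|Q(f)|$ is small. If $Q(f)$ is very negative, then $I_\omega(f)=S_\omega(f)+\frac{2}{\alpha d}|Q(f)|$ can be $\ge m_\omega$, which is perfectly consistent with Lemma \ref{lemma Le Coz}.

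The missing idea, and what the paper does, is to carry the nonlinearity-free functional $I_\omega=S_\omega-\frac{2}{\alpha d}Q$ instead of $S_\omega$. Because $Q(u(t))\ge0$ along the flow, $I_\omega(u(t))\le S_\omega(u_0)=m_\omega-\nu$. Moreover, $I_\omega$ is a sum of nonnegative quadratic terms, none of which increases under multiplication by $\chi_R(\cdot-s)$ (up to $O(R^{-2})\|u\|_2^2$ for $\|\nabla_x\cdot\|_2^2$) or under the phase $e^{ix\cdot\xi}$ with the minimizing $\xi$. Hence $I_\omega(\chi_Ru^\xi)\le m_\omega-\nu/2$ for $R\ge R_0$, and Lemma \ref{lemma Le Coz} gives $Q(\chi_Ru^\xi)>0$.

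From there your scaling argument is a valid, somewhat more direct substitute for the paper's two-case argument. Take $f=\chi_Ru^\xi$ and $\lambda_\star$ with $Q(f^{\lambda_\star})=0$. The inequality $m_\omega\le I_\omega(f^{\lambda_\star})=I_\omega(f)+(\frac12-\frac{2}{\alpha d})(\lambda_\star^2-1)\|\nabla_xf\|_2^2$ forces $(\lambda_\star^2-1)\|\nabla_xf\|_2^2\gtrsim\nu$. Since $\lambda_\star^{\frac{\alpha d}{2}-2}=\|\nabla_xf\|_2^2/(\|\nabla_xf\|_2^2-Q(f))$ and $\|\nabla_xf\|_2$ is uniformly bounded, this yields $Q(f)\gtrsim_\nu\|\nabla_xf\|_2^2$.

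A minor inaccuracy: the error that genuinely needs the $\frac{dR}{R}$-average carries a weight of size $\min\{|x|/R,R/|x|\}$ rather than being annulus-localized. The annulus error is the $O(\eta)$ one. Your final bookkeeping is unaffected.
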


Before proving the theorem, we begin by discussing some basic properties of solutions in $P_{\omega}^+$.
 
  \begin{lemma}\label{global1}
Let $u$ be a solution of  \eqref{PHNLS}  and assume that there exists some $t$ in the lifespan of $u$ such that $u(t) \in P_{\omega}^+$. Then $u(t) \in P_{\omega}^+$ for all $t$ in the maximal lifespan of $u$.
\end{lemma}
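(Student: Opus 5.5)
The plan is a standard invariance argument that combines conservation laws with the variational characterization of $m_\omega$ and continuity in time.

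First, since $M$ and $E$ are conserved along the flow of \eqref{PHNLS}, the action $S_\omega(u(t))=E(u(t))+\frac{\omega}{2}M(u(t))$ is constant on the maximal lifespan. Hence $S_\omega(u(t))=S_\omega(u(t_0))<m_\omega$ for every $t$, where $t_0$ is the time with $u(t_0)\in P_\omega^+$. Also, $u(t)\neq 0$ for all $t$, because the mass is conserved and $M(u(t_0))>0$. It therefore remains only to show that $Q(u(t))\geq 0$ for all $t$.

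We argue by contradiction. Suppose $Q(u(t_1))<0$ for some $t_1$ in the lifespan. The map $t\mapsto Q(u(t))$ is continuous, since $u\in C(I,\Sigma)$ and $Q$ is continuous on $\Sigma$: the potential term is controlled by Lemma \ref{lemma GNinequality}, or simply by the Sobolev embedding $H^1(\R^{d+1})\hookrightarrow L^{\alpha+2}$, valid because $\alpha<4/(d-1)$. By the intermediate value theorem there is a time $t_2$ between $t_0$ and $t_1$ with $Q(u(t_2))=0$. Since $u(t_2)\neq0$, we have $u(t_2)\in K$, and the definition of $m_\omega$ in \eqref{minimiztaon problem} gives $S_\omega(u(t_2))\geq m_\omega$. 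This contradicts $S_\omega(u(t_2))<m_\omega$. Therefore $Q(u(t))\geq0$ throughout the lifespan, and so $u(t)\in P_\omega^+$ for all $t$.

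The only step needing any care is the continuity of $Q(u(t))$ in time, which reduces to the continuity of the $L^{\alpha+2}$ norm along the $\Sigma$-valued flow. Beyond that, the argument needs no coercivity or variational input other than the definition of $m_\omega$ itself. As a byproduct, combining $Q(u(t))\geq0$ with $S_\omega(u(t))<m_\omega$ through $I_\omega=S_\omega-\frac{2}{\alpha d}Q$ shows that $\|u(t)\|_\Sigma$ stays uniformly bounded. Together with the blow-up alternative of Lemma \ref{lwp}, this gives global existence, which is the next step in the paper.
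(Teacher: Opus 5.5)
Your proposal is correct and follows essentially the same argument as the paper: conservation of $S_\omega$ and continuity of $t\mapsto Q(u(t))$ give, via the intermediate value theorem, a time with $Q(u)=0$ and $S_\omega(u)<m_\omega$, which contradicts the definition of $m_\omega$. Your version also spells out two points the paper leaves implicit: $u(t)\neq 0$ for all $t$ by mass conservation, and the continuity of $Q$ on $\Sigma$ via the embedding $H^1(\R^{d+1})\hookrightarrow L^{\alpha+2}$.
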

\begin{proof}
    We argue by contradiction. Suppose that there exists some time $s$ in the lifespan of $u$ such that $Q(u(s))< 0$. Then by continuity of $u$ and conservation of energy, we know that there exists some $t^{\prime}$ lying between $t$ and $s$ such that $Q(u(t^{\prime}))=0$ and $S_{\omega}(u(t^{\prime}))<m_\omega$, which obviously contradicts the definition of $m_\omega$.
\end{proof}

\begin{lemma}\label{global2}
Let $u$ be a solution of  \eqref{PHNLS} with initial data     $u_0\in P_{\omega}^+$. Then $u$ is global and satisfies
\begin{equation}\label{xxx}
    \sup_{t\in\R}\|u(t)\|_{\Sigma}\lesssim  m_{\omega}.
\end{equation}
\end{lemma}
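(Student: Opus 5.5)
By Lemma \ref{global1}, $u(t)\in P_\omega^+$ for every $t$ in the maximal lifespan, so $Q(u(t))\geq 0$ and $S_\omega(u(t))<m_\omega$ throughout. The plan is to turn these two facts into a uniform $\Sigma$ bound using the functional $I_\omega = S_\omega-\frac{2}{\alpha d}Q$ from \eqref{defi of I}. Then the blow-up alternative in Lemma \ref{lwp} rules out finite-time blow-up.

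First, I note that $S_\omega(u(t))=E(u(t))+\frac{\omega}{2}M(u(t))$ is conserved, so it equals $S_\omega(u_0)<m_\omega$ for all $t$. Since $Q(u(t))\geq 0$ and $\frac{2}{\alpha d}>0$, we get
\begin{equation*}
I_\omega(u(t)) = S_\omega(u(t))-\frac{2}{\alpha d}Q(u(t))\leq S_\omega(u_0)<m_\omega .
\end{equation*}
By the explicit form \eqref{defi of I},
\begin{equation*}
I_\omega(u(t))=\frac12\sbr{\nm{\partial_y u(t)}_2^2+\nm{y u(t)}_2^2+\omega\nm{u(t)}_2^2}+\sbr{\frac12-\frac{2}{\alpha d}}\nm{\nabla_x u(t)}_2^2 .
\end{equation*}
Every coefficient here is positive, because $\alpha>4/d$ gives $\frac12-\frac{2}{\alpha d}>0$ and $\omega>0$. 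Hence
\begin{equation*}
\nm{u(t)}_\Sigma^2\leq C(\omega,\alpha,d)\, I_\omega(u(t))\leq C\, m_\omega ,
\end{equation*}
with $C=\max\lbr{2,\,2/\omega,\,(\frac12-\frac{2}{\alpha d})^{-1}}$. This holds uniformly on the lifespan and yields \eqref{xxx}, reading the bound as $\lesssim m_\omega$ up to constants depending on $\omega,\alpha,d$; strictly it is the squared norm that is controlled by $m_\omega$.

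Second, I use the uniform bound for globality. In particular $\nm{\nabla_z u(t)}_2$ stays bounded on the maximal interval. Lemma \ref{lwp} says that a finite maximal time forces $\nm{\nabla_z u(t)}_2\to\infty$, so the solution must be global in both directions.

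The only point needing care is that Lemma \ref{global1} applies on the whole maximal lifespan before globality is known. This holds because its continuity argument uses only the continuity of $t\mapsto Q(u(t))$ on the existence interval together with conservation of $S_\omega$. I therefore do not expect a genuine obstacle here.
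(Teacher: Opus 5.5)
Your proof is correct and follows essentially the same route as the paper. It combines the invariance of $P_\omega^+$ from Lemma \ref{global1}, the positive-coefficient form of $I_\omega = S_\omega - \frac{2}{\alpha d}Q$ (positive since $\alpha>4/d$), and the blow-up alternative of Lemma \ref{lwp}; your remark that it is $\nm{u(t)}_\Sigma^2$ that is bounded by a multiple of $m_\omega$ is accurate and corrects the paper's display, which conflates the norm with its square.
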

\begin{proof}
   By Lemma \ref{global1}, $u(t)\in P_{\omega}^+$ for all $t$ in the maximal lifespan of $u$. Therefore, by the definition of $P_{\omega}^+$, we have 
    \begin{align*}
    \|u(t)\|_{\Sigma}&\lesssim \frac{1}{2}    \|\partial_y u(t)\|_2^2   +  \frac{1}{2} \|yu(t)\|_{2}^2+ \frac{\omega}{2} \|u(t)\|_{2}^2   +(\frac{1}{2}-\frac{2}{\alpha d}) \|\nabla_x u(t)\|_2^2\\
        &=S_{\omega}(u(t))-\frac{2}{\alpha d}Q(u(t))<m_\omega,
    \end{align*}
which together with Lemma \ref{lwp} yields  global well-posedness and \eqref{xxx}.
\end{proof}
We now prove the following coercivity result, which is crucial for the proof of Theorem \ref{thm5.1}.
\begin{lemma}[Coercivity estimate]\label{lem coercivity}
Let    $u$ be a solution of \eqref{PHNLS} with initial data  $u_0\in P_{\omega}^+$. Then there exist two time-independent constants $0<\delta\ll 1$ and $R_0\gg 1$ such that for all $R\geq R_0$, $s\in \mathbb{R}^d$, $t\in\mathbb{R}$, it holds
\begin{equation}
Q(\chi_R(\cdot-s)u^\xi(t))\geq \delta\|\nabla_x(\chi_R(\cdot-s))u^\xi(t)\|^2_{2}.\label{coer}
\end{equation}

\end{lemma}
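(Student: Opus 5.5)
The plan is to reduce \eqref{coer} to a quantitative version of the variational characterization of $m_\omega$ through the functional $I_\omega$ of \eqref{defi of I}, combined with Lemma \ref{lemma Le Coz} and the scaling of Lemma \ref{lemma property Q}. I read the right-hand side of \eqref{coer} as $\delta\|\nabla_x(\chi_R(\cdot-s)u^\xi(t))\|_2^2$. Since $u_0\in P_\omega^+$, conservation of mass and energy give a fixed $\delta'>0$ with $S_\omega(u(t))=S_\omega(u_0)\le(1-2\delta')m_\omega$ for all $t$. By Lemma \ref{global2} we also have $\sup_t\|u(t)\|_\Sigma\lesssim 1$. Hence $I_\omega(u(t))=S_\omega(u(t))-\frac{2}{\alpha d}Q(u(t))\le(1-2\delta')m_\omega$, using $Q(u(t))\ge0$ from Lemma \ref{global1}.

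\emph{Step 1: localization.} Set $\phi=\chi_R(\cdot-s)u^\xi(t)$. The phase $e^{ix\cdot\xi}$ does not affect $\partial_y\phi$, $y\phi$, $|\phi|$, or the $L^{\alpha+2}$ norm. Multiplication by $0\le\chi_R\le1$ only decreases $\|\partial_y\phi\|_2$, $\|y\phi\|_2$ and $\|\phi\|_2$, since $\chi_R$ depends only on $x$. For the $x$-gradient, the choice of $\xi$ makes the localized momentum vanish, and therefore $\|\nabla_x(\chi_R u^\xi)\|_2^2=\|\nabla_x(\chi_R u)\|_2^2-|\xi|^2\|\chi_R u\|_2^2\le\|\nabla_x(\chi_Ru)\|_2^2$. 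Integrating by parts, $\|\nabla_x(\chi_R u)\|_2^2=\int\chi_R^2|\nabla_x u|^2\,dz-\int\chi_R\Delta_x\chi_R|u|^2\,dz\le\|\nabla_x u\|_2^2+CR^{-2}\|u\|_2^2$. The $\Delta_x\chi_R$ term needs $\chi$ smooth; this is the place where $\eta$ enters, and the constant may depend on $\eta$, which is harmless for fixed $\eta$. Together these give $I_\omega(\phi)\le I_\omega(u(t))+CR^{-2}\le(1-\delta')m_\omega$ for all $R\ge R_0$, uniformly in $s$ and $t$.

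\emph{Step 2: positivity.} If $\phi\ne0$ and $Q(\phi)\le0$, then Lemma \ref{lemma Le Coz} gives $I_\omega(\phi)\ge\tilde m_\omega=m_\omega$, which contradicts Step 1. Hence $Q(\phi)>0$; the case $\phi=0$ is trivial.

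\emph{Step 3: quantitative gap (main step).} Write $A=\|\nabla_x\phi\|_2^2$ and suppose $Q(\phi)<\delta A$. By Lemma \ref{lemma property Q}, $\lambda_\star=\lambda_\star(\phi)>1$ and $Q(\phi^{\lambda_\star})=0$. From \eqref{defi of Q lambda}, $\lambda_\star^{\frac{\alpha d}{2}-2}=A/(A-Q(\phi))\le(1-\delta)^{-1}$, so $\lambda_\star^2-1\le C\delta$ because $\alpha d>4$. Since $I_\omega(\phi^\lambda)$ differs from $I_\omega(\phi)$ only through the term $(\frac12-\frac2{\alpha d})\lambda^2A$, we obtain
\[
m_\omega\le S_\omega(\phi^{\lambda_\star})=I_\omega(\phi^{\lambda_\star})=I_\omega(\phi)+\Big(\tfrac12-\tfrac2{\alpha d}\Big)(\lambda_\star^2-1)A\le(1-\delta')m_\omega+C\delta A .
\]
By Step 1 and Lemma \ref{global2}, $A\lesssim I_\omega(\phi)\lesssim m_\omega$, so the right-hand side is at most $(1-\delta'+C'\delta)m_\omega<m_\omega$ once $\delta<\delta'/C'$. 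This is a contradiction, so $Q(\phi)\ge\delta\|\nabla_x\phi\|_2^2$ with $\delta$ and $R_0$ independent of $t$ and $s$.

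The main obstacle is Step 1. It has to be uniform in $s$, $t$ and $\xi$, and it relies on the fact that the Galilean phase and the cutoff act only in $x$, so that neither the $y$-terms nor the $L^{\alpha+2}$ term can increase $I_\omega$. The momentum identity coming from the choice of $\xi$ is what prevents the phase from inflating $\|\nabla_x\phi\|_2$.
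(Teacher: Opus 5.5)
Your proof is correct. Its first two steps match the paper's proof. The paper also uses the momentum-cancelling choice of $\xi$ and the identity $\int|\nabla_x(\chi u)|^2=\int\chi^2|\nabla_x u|^2-\int\chi\Delta_x\chi\,|u|^2$ to get $I_\omega(\chi_R(\cdot-s)u^\xi(t))\le m_\omega-\nu/2$ uniformly in $s,t$ for $R\ge R_0$. It then applies Lemma \ref{lemma Le Coz} to get $Q(\chi_R(\cdot-s)u^\xi(t))>0$.

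\textbf{Where the routes differ: the quantitative step.}

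\emph{The paper's route.} It runs the standard convexity argument, splitting on the sign of $4\|\nabla_x\Theta\|_2^2-\frac{\alpha d(\alpha d+4)}{4(\alpha+2)}\|\Theta\|_{\alpha+2}^{\alpha+2}$.
\begin{itemize}
\item If this quantity is nonnegative, it reads off $Q(\Theta)\ge(1-\frac{8}{\alpha d+4})\|\nabla_x\Theta\|_2^2$ directly.
\item Otherwise, it integrates $\partial_\lambda Q(\Theta^\lambda)\le-2\,\partial_\lambda S_\omega(\Theta^\lambda)$ over $[1,\lambda_\star]$ to get $Q(\Theta)\gtrsim\nu$. It then converts this into $Q(\Theta)\gtrsim\|\nabla_x\Theta\|_2^2$ using the a priori bound on $\Theta$.
\end{itemize}

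\emph{Your route.} You use the exact homogeneity of the $x$-scaling instead. Two facts do the work: $\lambda_\star$ is explicit, $\lambda_\star^{\alpha d/2-2}=A/(A-Q(\phi))$, and $I_\omega(\phi^\lambda)$ depends on $\lambda$ only through $(\tfrac12-\tfrac{2}{\alpha d})\lambda^2A$. So a small ratio $Q(\phi)/A$ keeps $I_\omega(\phi^{\lambda_\star})$ strictly below $m_\omega$, which contradicts Lemma \ref{lemma Le Coz}.

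\emph{What each buys.} Your version avoids both the case split and the differential inequality. It produces $\delta$ explicitly proportional to the energy gap $\delta'$. It also gets the bound $A\lesssim m_\omega$ straight from $I_\omega(\phi)$, rather than invoking Lemma \ref{global2} for a function that is not a solution. The paper's argument uses only the sign structure of $\lambda\mapsto Q(\Theta^\lambda)$. It needs no closed form for $\lambda_\star$ or for $I_\omega$ along the scaling curve, so it is the more robust template when such formulas are unavailable.

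\textbf{Smaller points.} Your reading of the right-hand side as $\delta\|\nabla_x(\chi_R(\cdot-s)u^\xi(t))\|_2^2$ is the intended one. It is what the paper's proof concludes and what is used in \eqref{bbz}. Your remark that $R_0$ depends on $\eta$ through $\Delta_x\chi_R$ is also accurate; the paper's $O(R^{-2})$ hides the same dependence.
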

\begin{proof}
Assume that $S_{\omega}(u)=m_\omega-\nu$  for some $\nu>0$. Since
\begin{align*}
\int|\nabla_{x}(\chi u)|^2\,dz&=\int\chi^2|\nabla_x u|^2\,dz-\int \chi \Delta_x \chi |u|^2\,dz,\\
\int|\partial_y(\chi u)|^2\,dz&=\int\chi^2|\partial_y u|^2\,dz,
\end{align*}
we obtain
\begin{align*}
\int|\nabla_{x}(\chi u^\xi)|^2\,dz&=|\xi|^2\int\chi^2|u|^2\,dz+\int\chi^2|\nabla_xu|^2\,dz\nonumber\\
&-\int \chi \Delta_x \chi |u|^2\,dz+2\xi\cdot\int\mathrm{Im}(\chi^2 \bar{u}\nabla_x u)\,dz,\\
\int|\partial_y(\chi u^\xi)|^2\,dz&=\int\chi^2|\partial_y u|^2\,dz.
\end{align*}
Let 
\[
\tilde{I}_\omega(u(t)):=I_\omega(u(t))-\frac{1}{2}\|u(t)\|_{2}^2-\frac{\omega}{2}\|yu(t)\|_{2}^2.
\]
Then if $\int \chi_R^2(x-s)|u(t,x,y)|^2\,dxdy\neq 0$, we have
\begin{align}
\tilde{I}_\omega(\chi_R(x-s) u^\xi(t))&=\frac{1}{2}\int \chi_R^2(x-s)|\partial_y u(t)|^2\,dz+\Big(\frac{1}{2}-\frac{2}{\alpha d}\Big)\int \chi_R^2(x-s)|\nabla_{x} u(t)|^2\,dz\nonumber\\
&-\frac{\Big(\int\mathrm{Im}(\chi_R^2(x-s)\bar{u}(t,z)\nabla_x u(t,z))\,dz\Big)^2}{\int \chi_R^2(x-s)|u(t,z)|^2\,dz}
\nonumber\\
&-\int \chi_R(x-s)\Delta_x(\chi_R(x-s))|u(t,z)|^2\,dz\nonumber\\
&\leq \tilde{I}_\omega(u(t))+O(R^{-2}).\label{5.3}
\end{align}
In the case $\int \chi_R^2(x-s)|u(t,z)|^2\,dz= 0$, we can  deduce \eqref{5.3} in a similar way. Taking $R_0\gg 1$ such that $O(R^{-2})\leq \frac{\nu}{2}$ for all $R\geq R_0$, and combining with $S_{\omega}(u)=m_\omega-\nu$ and $Q(u(t))>0$, we derive 
\begin{align*}
    I_\omega(\chi_R(x-s) u^\xi(t))&\leq \tilde{I}_\omega(\chi_R(x-s) u^\xi(t))+\frac{1}{2}\|u(t)\|_{2}^2+\frac{\omega}{2}\|yu(t)\|_{2}^2\\
    &\leq I_\omega(u(t))+\frac{\nu}{2}+\frac{1}{2}\|u(t)\|_{2}^2+\frac{\omega}{2}\|yu(t)\|_{2}^2\\
    &\leq S_{\omega}(u(t))-\frac{2}{\alpha d}Q(u(t))+\frac{\nu}{2}\\
    &\leq m_\omega-\frac{\nu}{2}.
\end{align*}
In view of Lemma \ref{lemma Le Coz}, we have  $Q(\chi_R(x-z) u^\xi(t))>0$. Define $\Theta:=\chi_R(x-z)u^\xi(t),$ we consider two distinct cases that may occur.\medbreak
\begin{itemize}
    \item[ Case 1:] If
\[
4\|\nabla_x \Theta\|_2^2-\frac{\alpha d(\alpha d+4)}{4(\alpha+2)}\|\Theta\|_{\alpha+2}^{\alpha+2}\geq 0 
\]
happens,  we have
\begin{align}
Q(\Theta)\geq\left(1-\frac{8}{\alpha d+4}\right) \|\nabla_x \Theta\|_2^2.
\end{align}
As $\alpha>\frac{4}{d}$, we have $1-\frac{8}{\alpha d+4}>0$. Thus, \eqref{coer} follows naturally.
\medbreak
\item[ Case 2:] If
\begin{align}\label{hyp2}
4\|\nabla_x \Theta\|_2^2-\frac{\alpha d(\alpha d+4)}{4(\alpha+2)}\|\Theta\|_{\alpha+2}^{\alpha+2}< 0
\end{align}
happens, we define $f(\lambda):=E(\Theta^{\lambda})$, where $\Theta^{\lambda}$ is defined by \eqref{scalingoperator}. Then by a direct calculation, we see that
\begin{align}
(Q(\Theta^{\lambda}))'&=(\lambda f'(\lambda))'=-2f'(\lambda)+\lambda\left(4\|\nabla_x \Theta\|_2^2-{\lambda}^{\frac{\alpha d}{2}-1}\frac{\alpha d(\alpha d+4)}{4(\alpha+2)}\|\Theta\|_{\alpha+2}^{\alpha+2}\right)\\&=-2f'(\lambda)+\lambda h(\lambda).
\end{align}
Using \eqref{hyp2} we know that $h(\lambda)<0$ for all $\lambda\in[1,\infty)$, thus
\begin{align}
(Q(\Theta^{\lambda}))'\leq -2f'(\lambda)\quad\text{for all $t\in[1,\infty)$}.\label{5.7}
\end{align}
Since $Q(\Theta)>0$, by Lemma \ref{lemma property Q} there exists   $\lambda_0\in(1,\infty)$ such that $Q(\Theta^{\lambda_0})=0$. Moreover, by Lemma \ref{lemma Le Coz}, we see that $I_\omega(\Theta^{\lambda_0})\geq m_\omega$. Therefore, integrating \eqref{5.7} yields
\begin{align}
Q(\Theta)&\geq 2(S_{\omega}(\Theta^{\lambda_0})-S_{\omega}(\Theta))=2\left(I_\omega(\Theta^{\lambda_0})-I_\omega(\Theta)-\frac{2}{\alpha d}Q(\Theta)\right),
\end{align}
which combining with $I_\omega(\Theta)\leq m_\omega-\frac{\nu}{2}$ implies
\begin{align}
Q(\Theta )&\gtrsim \nu.
\end{align}
On the other hand, as $Q(\Theta)>0$,  Lemma \ref{global2} guarantees that $\|\Theta\|_{\Sigma} \lesssim m_\omega$,
accordingly, there holds that 
\[
Q(\Theta)\gtrsim \|\nabla_{x}\Theta \|_{2}^2,
\]
as desired.
\end{itemize}
\end{proof}	
With the above preparation in hand, we now in the position to prove Theorem \ref{thm5.1}.
\begin{proof}[Proof of Theorem \ref{thm5.1}]
We will prove Theorem \ref{thm5.1} via the virial-Morawetz identity. To this end, we first introduce some functions.  Let $ R \gg 1 $ be sufficiently large and let $ \phi $ and $ \varphi $  be radial functions satisfying
\begin{equation*}
\phi_R(x) = \frac{1}{\omega_{d}R^{d}} \int_{\mathbb{R}^d} \chi^2 \left(\frac{x - s}{R}\right) \chi^2 \left(\frac{s}{R}\right) {\rm d}s,
\end{equation*}
and
\begin{equation*}
\varphi_R(x) = \frac{1}{\omega_{d}R^{d}} \int_{\mathbb{R}^d} \chi^{\alpha+2} \left(\frac{x - s}{R}\right) \chi^2 \left(\frac{s}{R}\right) {\rm d}s,
\end{equation*}
where $ \omega_{d} $ is the volume of unit ball in $ \mathbb{R}^d $.
Finally, we define
\begin{equation*}
\psi_R(x) = \frac{1}{\vert x \vert} \int_{0}^{\vert x \vert} \phi_R(r) {\rm d}r.
\end{equation*}
We next define the interaction Morawetz quantity
\begin{equation}
M(t) = 2 \int_{\mathbb{R}^{d+1}} \int_{\mathbb{R}^{d+1}} \operatorname{Im} \overline{u(t,x_2,y_2)} \nabla u(t,x_2,y_2) \cdot \psi_R(x_1-x_2)(x_1-x_2) |u(t,x_1,y_1)|^2 {\rm d}z_1 {\rm d}z_2.\label{M}
\end{equation}
By  a direct  calculation, we have
\begin{align}
&\frac{d}{dt}M(t) = -4 \iint \partial_{x_j} \left( \operatorname{Im} \left( \bar{u}(t, z_2) \partial_{x_j} u(t, z_2) \right) \right) \nonumber \\&\quad \times \psi_R(x_1 - x_2) (x_1 - x_2)_k \left( \operatorname{Im} \left( \bar{u}(t, z_1) \partial_{x_k} u(t, z_1) \right) \right) dz_1 dz_2 \label{estx1} \\
&- 4 \iint |u(t, z_2)|^2 \psi_R(x_1 - x_2) (x_1 - x_2)_j \nonumber \\
&\quad \times \partial_{x_k} \left( \operatorname{Re} \left( \partial_{x_k} \bar{u}(t, z_1) \partial_{x_j} u(t, z_1) \right) \right) dz_1 dz_2 \label{estx2} \\
&+ \iint |u(t, z_2)|^2 \psi_R(x_1 - x_2) (x_1 - x_2) \cdot \nabla_x \Delta_x \left( |u(t, z_1)|^2 \right) dz_1 dz_2 \label{estx3} \\
&+ \frac{2\alpha}{\alpha+2} \iint|u(t, z_2)|^2 \psi_R(x_1 - x_2) (x_1 - x_2) \cdot \nabla_x \left( |u(t, z_1)|^{\alpha+2} \right) dz_1 dz_2 \label{estx4} \\
&- 4 \iint \operatorname{Im} \left( \bar{u}(t, z_2) \partial_y^2 u(t, z_2) \right)  \psi_R(x_1 - x_2) (x_1 - x_2) \cdot \operatorname{Im} \left( \bar{u}(t, z_1) \nabla_x u(t, z_1) \right) dz_1 dz_2 \label{esty1} \\
&- 2 \iint |u(t, z_2)|^2 \psi_R(x_1 - x_2) (x_1 - x_2) \cdot \operatorname{Re} \left( \partial_y^2 \bar{u}(t, z_1) \nabla_x u(t, z_1) \right) dz_1 dz_2 \label{esty2} \\
&+ 2 \iint |u(t, z_2)|^2 \psi_R(x_1 - x_2) (x_1 - x_2) \cdot \operatorname{Re} \left( \bar{u}(t, z_1) \partial_y^2 \nabla_x u(t, z_1) \right) dz_1 dz_2 \label{esty3} \\
&- 2 \iint |u(t, z_2)|^2 \psi_R(x_1 - x_2) (x_1 - x_2) \cdot \nabla_x \left[ |y_1|^2 |u(t, z_1)|^2 \right] dz_1 dz_2. \label{esty4}
\end{align}
We observe that since $\bar{u} \partial_y^2 u=\partial_y\left(\bar{u} \partial_y u\right)-\left|\partial_y u\right|^2$ and $\psi_{R}$ is $y$-independent,  it then follows from integrating by parts,
  \begin{equation}\label{aest1}
      \eqref{esty1}=0 
  \end{equation}
 and
 \begin{equation*}\label{aest2}
     \eqref{esty2}+\eqref{esty3}=0.
 \end{equation*}
Moreover, as 
\begin{equation}\label{compu1}
    \sum_{k=1}^d\partial_{x_k}\left[\psi_R(x) x_{k}\right]=d \phi_R(x)+(d-1)(\psi_R-\phi_R)(x)
\end{equation}
and $\psi_R-\phi_R\geq0$, by integrating by parts, we also have
\begin{align}
\eqref{esty4}&=2d\iint\left|u\left(t, z_2\right)\right|^2 \phi_R\left(x_1-x_2\right) \left|y_1\right|^2\left|u\left(t, z_1\right)\right|^2 d z_1 d z_2\notag\\
&+2(d-1)\iint\left|u\left(t, z_2\right)\right|^2 (\psi_R-\phi_R)\left(x_1-x_2\right) \left|y_1\right|^2\left|u\left(t, z_1\right)\right|^2 d z_1 d z_2\notag\\
&\geq 0.\label{aest3}
\end{align}

Next we treat the terms that related to the $x$-direction. Let us denote $P_{j k}(x_1-x_2)=\delta_{j k}-\frac{(x_1-x_2)_j(x_1-x_2)_k}{|x_1-x_2|^2}$. Then, by performing calculations that are virtually identical to those in \cite{DM-MRL}, we can obtain that
\begin{equation}\begin{aligned}
&\eqref{estx1}=  -\frac{4}{\omega_d R^d} 
\iiint\chi^2\left(\frac{x_1-s}{R}\right) \chi^2\left(\frac{x_2-s}{R}\right) \operatorname{Im}[\bar{u} \nabla_x u](t,z_1) \cdot \operatorname{Im}[\bar{u} \nabla_x u](t,z_2) d z_1 d z_2 d s \label{subest1} \end{aligned}
\end{equation}
\begin{equation}
    \begin{aligned}
&\hspace{3ex}\hspace{3ex} -4
\iint\operatorname{Im}\left(\bar{u} \partial_{x_j}u\right)(t,z_2) \operatorname{Im}\left(\bar{u} \partial_{x_k}u\right)(t,z_1) P_{j k}(x_1-x_2)[(\psi_R-\phi_R)(x_1-x_2)] d z_1 d z_2,\label{subest2}
    \end{aligned}
\end{equation}
\begin{equation}
    \begin{aligned}
  & \eqref{estx2}=   \frac{4}{\omega_d R^d} 
\iiint\chi^2\left(\frac{x_1-s}{R}\right) \chi^2\left(\frac{x_2-s}{R}\right)|u(t,z_2)|^2|\nabla_x u(t,z_1)|^2 d x d y d s \label{subest3}      
    \end{aligned}
\end{equation}
  \begin{equation}
      \begin{aligned}
& \hspace{3ex}+4 
\iint|u(t,z_2)|^2 \operatorname{Re}\left(\partial_{x_j}\bar{u} \partial_{x_k}u\right)(t,z_1) P_{j k}(x_1-x_2)[(\psi_R-\phi_R)(x_1-x_2)] d z_1 d z_2,\label{subest4}          
      \end{aligned}
  \end{equation}

\begin{equation}
\begin{aligned}
\eqref{estx3}=\iint|u(t,z_2)|^2 \nabla_x|u(t,z_1)|^2 \cdot \nabla_x[(d-1) \psi_R(x_1-x_2)+\phi_R(x_1-x_2)] d z_1 d z_2,\label{subest8}
\end{aligned}
\end{equation}
and
\begin{align}
\eqref{estx4}= & -\frac{2\alpha d}{(\alpha+2) \omega_d R^d} 
\iiint\chi^2\left(\frac{x_2-s}{R}\right) \chi^{\alpha+2}\left(\frac{x_1-s}{R}\right)|u(t,z_2)|^2|u(t,z_1)|^{\alpha+2} d z_1 d z_2 d s \label{subest5}\\
& -\frac{2\alpha(d-1)}{\alpha+2} 
\iint|u(t,x_2)|^2|u(t,x_1)|^{\alpha+2}[\psi_R-\phi_R](x_1-x_2) d z_1 d z_2 \label{subest6}\\
& -\frac{2\alpha d}{\alpha+2} 
\iint |u(t,x_2)|^2|u(t,x_1)|^{\alpha+2}\left[\phi_R-\varphi_R\right](x_1-x_2) d z_1 d z_2 .\label{subest7}.
\end{align}

Let $\slashed{\nabla}_{x_j}$ denote the angular derivative (related to the $x$-direction) centered at $x_j$, $j=1,2$. Then following the strategy in \cite{DM-MRL},  
\begin{align}
   \eqref{subest2}+\eqref{subest4}= & 4 
   \iint |u(t,z_2)|^2|\left.\slashed{\nabla}_{x_2} u(t,z_1)\right|^2[(\psi_R-\phi_R)(x_1-y_1)] d z_1 d z_2 \\
& -4 
\iint\operatorname{Im}\left[\bar{u} \slashed{\nabla}_{x_1} u\right](t,z_2) \cdot \operatorname{Im}\left[\bar{u} \slashed{\nabla}_{x_2} u\right](t,z_1)[(\psi_R-\phi_R)(x_1-x_2)] d z_1 d z_2
\end{align}
and hence by Cauchy-Schwarz and the fact that $\psi_R-\phi_R\geq0$, we deduce
\begin{align}\label{aest4}
    \eqref{subest2}+\eqref{subest4}\geq 0.
\end{align}

We turn to \eqref{subest1}+\eqref{subest3}. As observed in \cite{DM-MRL}, one may check that for any $\xi\in\R^d$, if we define $u^{\xi}:=e^{ix\cdot\xi}u$, then
\begin{align*}
    &
    \iint \chi^2\left(\frac{x_1-s}{R}\right) \chi^2\left(\frac{x_2-s}{R}\right)\left\{|u(t,z_2)|^2|\nabla_x u(t,z_1)|^2-\operatorname{Im}[\bar{u} \nabla_x u](t,z_1) \cdot \operatorname{Im}[\bar{u} \nabla_x u](t,z_2)\right\} d z_1 d z_2 \\
    &=
    \iint \chi^2\left(\frac{x_1-s}{R}\right) \chi^2\left(\frac{x_2-s}{R}\right)\Big\{|u^{\xi}(t,z_2)|^2|\nabla_x u^{\xi}(t,z_1)|^2\\
    &\hspace{32ex}-\operatorname{Im}[\overline{u^{\xi}} \nabla_x u](t,z_1) \cdot \operatorname{Im}[\overline{u^{\xi}} \nabla_x u^{\xi}](t,z_2)\Big\} d z_1 d z_2.
\end{align*}
Therefore, if we choose 
\begin{equation*}
\xi=\xi(t,s,R)=\left\{
\begin{array}{rr}
             -\frac{\int \mathrm{Im}(\chi_R^2(x_1-s)\bar{u}(t,z_1)\nabla_x u(t,z_1))\,dz_1}{\int\chi_R^2(x_1-s)|u(t,z_1)|^2\,dz_1}, &\text{if }\int \chi_R^2(x_1-s)|u(t,z_1)|^2\,dz_1\neq 0,\\
             0, &\text{if }\int \chi_R^2(x_1-s)|u(t,z_1)|^2\,dz_1=0,
             \end{array}
\right.
\end{equation*}
then
\begin{equation}\label{aest5}
    \begin{aligned}
    \eqref{subest1}&+\eqref{subest3}\\&=\frac{4}{\omega_d R^d} 
    \iint\chi^2\left(\frac{x_1-s}{R}\right) \chi^2\left(\frac{x_2-s}{R}\right)|u(t,z_2)|^2\left|\nabla_x u^{\xi}(t,z_1)\right|^2 d z_1 d z_2 d s
\end{aligned}
\end{equation}
In view of estimates \eqref{aest1}-\eqref{aest5}, we arrive at
\begin{align}
&\frac{4}{\omega_d R^d} 
\iiint \chi^2\left(\frac{x_1-s}{R}\right) \chi^2\left(\frac{x_2-s}{R}\right)|u(t,z_2)|^2\left|\nabla u^{\xi}(t,z_1)\right|^2 d z_1 d z_2 d s\notag\\
& -\frac{2\alpha d}{(\alpha+2) \omega_d R^d} 
\iiint \chi^2\left(\frac{x_2-s}{R}\right) \chi^{\alpha+2}\left(\frac{x_1-s}{R}\right)|u(t,z_2)|^2|u(t,z_1)|^{\alpha+2} d z_1 d z_2 d s\notag\\
    &\leq \frac{d}{dt}M(t)+|\eqref{subest8}|+|\eqref{subest6}|+|\eqref{subest7}|.\label{interme1}
\end{align}
Next, we observe that 
\begin{align}
   &\hspace{3ex} \int|\nabla_x(\chi(\frac{x_1-s}{R}) u^{\xi}(t,z_1))|^2 d z_1\\&=\int\chi^2(\frac{x_1-s}{R})|\nabla_x u^{\xi}(t,z_1)|^2-\chi(\frac{x_1-s}{R}) \Delta_x \chi(\frac{x_2-s}{R})|u^{\xi}(t,z_1)|^2 d z_1\notag\\
    &= \int|\nabla_x(\chi(\frac{x_1-s}{R}) u^{\xi}(t,z_1))|^2 d z_1+O(\frac{M(u)}{R^2}).
\end{align}
Therefore, by  Lemma \ref{lem coercivity}, \eqref{interme1} further implies that
\begin{align}
&\frac{1}{R^d}\iiint  \chi^2\left(\frac{x_2-s}{R}\right)|u(t,z_2)|^2\left|\nabla_x(\chi(\frac{x_1-s}{R})u^{\xi}(t,z_1))\right|^2 d z_1 d z_2 d s\notag\\
   & \lesssim \frac{1}{R^d}\iiint  \chi^2\left(\frac{x_2-s}{R}\right)|u(t,z_2)|^2Q(\chi(\frac{x_1-s}{R})u^{\xi}(t,z_1))d z_1 d z_2 d s\notag\\
   &\lesssim \text{LHS of }\eqref{interme1}+O(\frac{M(u)}{R^2}) \notag\\
   &\lesssim\frac{d}{dt}M(t)+|\eqref{subest8}|+|\eqref{subest6}|+|\eqref{subest7}|+O(\frac{M(u)}{R^2}).\label{bbz}
\end{align}
Recalling Lemma \ref{global2}, we have $\|u(t)\|_{\Sigma}\lesssim m_w$ uniformly for $t\in \R$. Therefore, by H\"older's inequality and Sobolev embedding,
\begin{equation}\label{Mbound}
    \sup_{t\in \R}|M(t)|\lesssim  m_{\omega}^2R,
\end{equation}
which together with the fundamental theorem of calculus, implies that
\begin{align}\label{finala}
    \frac{1}{T_0} \int_I \frac{1}{J_0} \int_{R_0}^{e^{J_0} R_0}\frac{d}{dt}M(t) \frac{d R}{R} d t &\lesssim \frac{m_w^2}{J_0T_0} \int_{R_0}^{e^{J_0} R_0} d R\notag\\
&\lesssim \frac{m_w^2e^{{J_0}}R_0}{{J_0}T_0} .
\end{align}
Moreover, since 
\[
|\phi_R(x)-\psi_R(x)| \lesssim \min \left\{\frac{|x|}{\eta R}, \frac{R}{\eta |x|}\right\} ,\,\,|\nabla \phi_R| \lesssim \frac{1}{\eta R}, \,\, \text { and } |\nabla_x [\phi_R-\psi_R]| \lesssim \min \left\{\frac{1}{\eta R}, \frac{R}{\eta |x|^2}\right\},
\]
we consequently obtain that
\begin{align}\label{finala2}
    \frac{1}{T_0} \int_I \frac{1}{{J_0}} \int_{R_0}^{e^{J_0} R_0}|\eqref{subest8}| \frac{d R}{R} d t &\lesssim \frac{m_w^2}{{\eta J_0}} \int_{R_0}^{e^{J_0} R_0} \frac{d R}{R^2}\lesssim \frac{m_w^2}{{\eta J_0} R_0} 
\end{align}
and
\begin{align}\label{finala3}
    \frac{1}{T_0} \int_I \frac{1}{{J_0}} \int_{R_0}^{e^{J_0} R_0}|\eqref{subest6}| \frac{d R}{R} d t &\lesssim \frac{m_w^2}{{\eta J_0}} \max_{x_1,x_2\in\R^d}\int_{R_0}^{e^{J_0} R_0} \min \left\{\frac{|x_1-x_2|}{R}, \frac{R}{|x_1-x_2|}\right\}\frac{d R}{R}\notag\\
&\lesssim \frac{m_w^2}{{\eta J_0}}.
\end{align}
Similarly, as one may easily check that
\[
\left|\phi_R(x)-\varphi_R(x)\right| \lesssim \eta,
\] 
we also have
\begin{align}\label{finalb}
    \frac{1}{T_0} \int_I \frac{1}{{J_0}} \int_{R_0}^{e^{J_0} R_0}|\eqref{subest6}| \frac{d R}{R} d t &\lesssim \frac{m_w^2\eta}{{J_0}} \int_{R_0}^{e^{J_0} R_0} \frac{d R}{R}\lesssim m_w^2\eta.
\end{align}
Inserting \eqref{finala}-\eqref{finalb} into \eqref{bbz}, we see that
\begin{align}
    \text{LHS of } \eqref{thm5.1e}\lesssim m_{\omega}^2[\frac{e^{J_0}R_0}{{J_0}T_0}+\frac{e^{J_0}R_0}{{\eta J_0}T_0}+\frac{1}{{\eta J_0}}+\frac{1}{{\eta J_0}R_0}+\eta],  
\end{align}
which clearly implies Theorem \ref{thm5.1} by taking $\eta=\varepsilon$, $J_0=\varepsilon^{-3}$, $R_0=\varepsilon^{-1}$ and $T_0=e^{\varepsilon^{-3}}$. 
\end{proof}
\section{Proof of the main results}
This section is devoted to the proof of Theorem \ref{thm:alldimension}. We begin with the following results.
\begin{theorem}\label{thm:main}
    Let $u$ be a solution of \eqref{PHNLS} with   initial data     
    $u_0\in P_{\omega}^+$.  Then the solution $u(t)$ is globally well-posed and scatters in the sense that there exists $\varphi_\pm\in \Sigma $ such that 
       \begin{align*}
                  \lim_{{t\to \pm\infty}}    \big\|u-e^{it(\Delta_{z}-y^2)}\varphi_\pm\big\|_{\Sigma }=0. 
       \end{align*}
       \end{theorem}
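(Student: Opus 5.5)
The strategy is to verify the hypothesis of the scattering criterion, Lemma \ref{scattering-cri}, using the interaction Morawetz estimate of Theorem \ref{thm5.1}. Global well-posedness and the uniform bound $\sup_t\|u(t)\|_\Sigma\lesssim m_\omega=:B$ follow from Lemma \ref{global2}. Since $u(t)\in P_\omega^+$ for all $t$ (Lemma \ref{global1}), Theorem \ref{thm5.1} applies on every window $[a,a+T_0]$, and the backward direction is handled by time reversal $u(t)\mapsto\overline{u(-t)}$, which preserves $P_\omega^+$. So it suffices to show that for every $a$ there is $T\in(a,a+T_0)$ with $[T-\varepsilon^{-\sigma},T]\subset(a,a+T_0)$ and $\|u\|_{L_t^q([T-\varepsilon^{-\sigma},T],L_x^r\mathcal H_y^s)}\lesssim\varepsilon^{\mu}$ for some $\mu>0$.

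\textbf{Step 1: pigeonhole on the Morawetz average.} By \eqref{thm5.1e}, the average over $t\in[a,a+T_0]$ and $\log R\in[\log R_0,\log R_0+J_0]$ is $\lesssim\varepsilon$. Split $[a+T_0/2,a+T_0]$ into disjoint intervals of length $\varepsilon^{-\sigma}$; this is possible since $T_0=e^{\varepsilon^{-3}}\gg\varepsilon^{-\sigma}$. Chebyshev then gives one such interval $[T-\varepsilon^{-\sigma},T]$ and one $R\in[R_0,R_0e^{J_0}]$ such that
\[
\int_{T-\varepsilon^{-\sigma}}^{T}\frac{1}{R^d}\int \Big(\int|\chi_R(x-s)u|^2dz\Big)\Big(\int|\nabla_x(\chi_R(x-s)u^\xi)|^2dz\Big)ds\,dt\lesssim \varepsilon\,\varepsilon^{-\sigma}.
\]

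\textbf{Step 2: from localized gradient to a small $L_x^2$-type mass on cubes.} Fix $t$ in this interval and cover $\R^d$ by cubes of side $\sim R$ centered on a lattice $s\in R\Z^d$, averaging in $s$ over a fraction of a cube. Write $f_s=\chi_R(\cdot-s)u$ and $g_s=\chi_R(\cdot-s)u^\xi$. Apply the anisotropic Gagliardo–Nirenberg inequality (Lemma \ref{lemma GNinequality}), or a localized Hölder/Sobolev bound in $x$ with the $y$-regularity controlled by $\Sigma$, to get
\[
\|f_s\|_{L^{\alpha+2}}^{\alpha+2}\lesssim \|f_s\|_{2}^{\theta}\,\|\nabla_x g_s\|_2^{\theta'}\,B^{\,\cdots}
\]
with $\theta+\theta'>2$, using $|u^\xi|=|u|$. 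Summing over the lattice with $\sum_s\|f_s\|_2^2\lesssim R^d\|u\|_2^2$ converts Step 1 into smallness of $\int_{T-\varepsilon^{-\sigma}}^T\|u(t)\|_{L^{\alpha+2}_{z}}^{\alpha+2}dt$ up to a power of $\varepsilon$, plus a harmless factor $R^{-\kappa}$ coming from $\nabla\chi_R$.

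\textbf{Step 3: upgrade to the criterion norm.} Interpolate the small $L_t^pL^{\alpha+2}$-type quantity against the bounded Strichartz norms from the local control lemma on the interval, whose length is $\varepsilon^{-\sigma}$. The bounded norm is $\|u\|_{L_t^pW_x^{1-s,q}\mathcal H_y^{s}}\lesssim\langle\varepsilon^{-\sigma}\rangle^{1/p}$, and since $s=1-s_c>1/2$ the $\mathcal H_y^s$ component is controlled. This yields $\|u\|_{L_t^qL_x^r\mathcal H_y^s}\lesssim \varepsilon^{c_1}\varepsilon^{-c_2\sigma}$, and choosing $\sigma$ small makes the right-hand side $\lesssim\varepsilon^{\mu}$. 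Lemma \ref{scattering-cri} then gives scattering in $\Sigma$.

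\textbf{Main obstacle.} Step 3 is the delicate part. The Morawetz quantity only controls $x$-derivatives and an $L^{\alpha+2}$-type norm, while the criterion needs $L_x^r\mathcal H_y^{s}$ with $s>1/2$. The plan is to interpolate the smallness in $L_x^rL_y^2$ (obtained through $L^{\alpha+2}_z$, Hölder in $y$, and the mass bound) against the $\Sigma$-bound in $y$. This is affordable only if the $y$-regularity index stays strictly below $1$, which holds because $s<1$. The exponent bookkeeping must leave a positive net power of $\varepsilon$ after accounting for the $\varepsilon^{-\sigma}$ losses, and I expect that to be the main point to check carefully.
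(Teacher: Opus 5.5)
Your Steps 1 and 2 are sound, and Step 2 differs from the paper's route. You apply the anisotropic Gagliardo--Nirenberg inequality to $g_s$, using $\|\partial_y g_s\|_2=\|\partial_y f_s\|_2\lesssim B$, and then sum over the lattice by H\"older. This works because $\alpha d/2\ge 2$ and $4-\alpha(d-1)>0$, and it leaves you with a small $L^{\alpha+2}_{t,z}$ norm. The paper instead applies the one-variable refined inequality $\|f\|_{L^{2d/(d-1)}_x}^2\lesssim\|f\|_{L^2_x}\|\nabla_x f^\xi\|_{L^2_x}$ for each fixed $y$, and gets a small $L^{2d/(d-1)}_{t,x}L^2_y$ norm. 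Being $L^2_y$-based, that norm interpolates directly with $\mathcal H_y$-valued norms. Yours would need an extra step, e.g.\ through the weight $\|yu\|_2$, to return to $L^2_y$.

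The genuine gap is Step 3. You invoke the local control lemma at $s=1-s_c$, but that lemma only holds for $s\in(\frac12,1-s_c)$. Its proof needs $H^{1-s}_x\hookrightarrow L^{r_1}_x$ for some $r_1$ with $\frac{2}{\alpha}-\frac{d}{r_1}>0$, so that the factor $|I|^{\alpha/b_1}$ has a positive exponent. This is exactly the condition $s<1-s_c$.

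More fundamentally, for $s=1-s_c$ the target norm of Lemma \ref{scattering-cri} is, after Sobolev in $x$, the norm $L^q_tW^{s_c,\rho}_x\mathcal H^{1-s_c}_y$ with $(q,\rho)$ $L^2$-admissible. It therefore sits at total level ($x$-scaling level plus $y$-regularity) $s_c+(1-s_c)=1$. That is the level of $\Sigma$ and of every Strichartz or local-control bound. Your small quantity carries no $y$-derivatives and lies strictly below level $1$. Any H\"older/interpolation argument with weight $\theta>0$ on it lands strictly below level $1$: Sobolev in $x$ preserves the level, H\"older in time on a finite interval only lowers it, and interpolation averages it. So no exponent bookkeeping gives a positive power of $\varepsilon$. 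The constraint you name in your last paragraph ($y$-index below $1$) is not the relevant one.

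The repair is to run the criterion with a $y$-index $s\in(\frac12,1-s_c)$ rather than $s=1-s_c$. The proof of Lemma \ref{scattering-cri} only uses $s>\frac12$ (so that $\mathcal H^s_y\hookrightarrow L^\infty_y$ in the fractional chain rule) and $s_c+s\le 1$ (so that $\Sigma$ controls the linear term). The target then sits at level $s_c+s<1$. You can interpolate an $L^2_y$-based small norm against the local-control bound $\|u\|_{L^p_tW^{1-s',\rho}_x\mathcal H^{s'}_y}\lesssim\langle\varepsilon^{-\sigma}\rangle^{1/p}$, where $s<s'<1-s_c$ and $(p,\rho)$ is $L^2$-admissible. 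To make this work:
\begin{itemize}
\item take $\theta$ small enough that $(1-\theta)s'\ge s$;
\item choose $p$ so that the interpolated time exponent is exactly $q$;
\item use Sobolev in $x$ to land in $L^{\alpha+2}_x$, which is possible since $1-s'>s_c>0$.
\end{itemize}
The resulting loss $\varepsilon^{-c\sigma}$ is absorbed by taking $\sigma$ small. This is how the paper's last step is organized: its small $L^{2d/(d-1)}_{t,x}L^2_y$ norm is interpolated against the local-control bound with $y$-index in $(\frac12,1-s_c)$.
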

  \begin{proof}
      For simplicity, we focus on the positive time direction. The negative case can be treated in a similar way.  In view of the scattering criterion (Lemma \ref{scattering-cri}), to prove scattering, it suffices to verify that for any $\varepsilon>0$, there exists  $T_0(\varepsilon,u)\gg1$ such that for any $a\in\R$,
 there exists $T\in(a,a+T_0)$ such that $[T-\varepsilon^{-\sigma},T]\in(a,a+T_0)$, there holds
 \begin{align}\label{mlm}
    \big\|u\big\|_{L_t^{q}L_x^r\mathcal{H}_y^s([T-\varepsilon^{-\sigma},T]\times\R^{d+1})}\lesssim\varepsilon^\mu
\end{align}
for some $\sigma,\mu>0$. From the interaction Morawetz estimate, there exists $T_0:=T_0(\varepsilon,u)\gg1$, ${J_0}={J_0}(\varepsilon,u)\gg1$, $R_0=R_0(\varepsilon,u_0)\gg1$ and $\eta_\varepsilon\ll1$ such that
\begin{align*}
    &\frac{1}{{J_0}T_0}\int_a^{a+T_0}\int_{R_0}^{R_0e^{J_0}}\frac{1}{R^d}\iiint \big|\chi_R(x_2-s)u(t,z_2)\big|^2\times\\
    &\hspace{20ex}\times\big|\nabla_x(\chi_R(x_1-s)u^\xi(t,z_1))\big|^2\,dz_1\,dz_2\,ds\frac{dR}{R}\,dt\lesssim\varepsilon.
\end{align*}
Then by H\"older's ineqaulity, we have
\begin{align*}
    \frac{1}{T_0}\int_a^{a+T_0}\frac{1}{R_1^d}\int\big\|\chi_{R_1}(\cdot-z)u(t)\big\|_{L_{x,y}^2}^2\big\|\nabla_x(\chi_{R_1}(\cdot-z)u^\xi(t))\big\|_{L_{x,y}^2}^2\,dz\,dt\lesssim\varepsilon.
\end{align*}
Denote by $z=\frac{R_1}{4}(w+\theta)$ with $w\in\Z^d$ and $\theta\in[0,1]^d$. For $\theta_0:\Z^d\to[0,1]^d$, it follows from the basic mean value theorem,
\begin{align*}
\frac{1}{T_0}\int_{a}^{a+T_0}\sum_{w\in\Z^d}\Big\|\chi_{R_1}\big(\cdot-\frac{R_1}{4}(w+\theta_0)\big)u(t)\Big\|_{L_{x,y}^2}^2\Big\|\nabla_x(\chi_{R_1}\big(\cdot-\frac{R_1}{4}(w+\theta_0)\big)u^\xi(t))\Big\|_{L_{x,y}^2}^2\,dt\lesssim\varepsilon.
\end{align*}
Spliting the time interval $\big[a+\frac{T_0}{2},a+\frac{3T_0}{4}\big]$ into $T_0\varepsilon^{\sigma}$ pieces of sub-intervals with length $\varepsilon^{-\sigma}$.  We claim that there exists at least one $T\in[a+\frac{T_0}{2},a+\frac{3T_0}{4}]$ such that $[T-\varepsilon^{-\sigma},T]\subset[a,a+T_0]$. Consequently, we have
\begin{align*}
\int_{T}^{T-\varepsilon^{-\sigma}}\sum_{w\in\Z^d}\Big\|\chi_{R_1}\big(\cdot-\frac{R_1}{4}(w+\theta_0)\big)u(t)\Big\|_{L_{x,y}^2}^2\Big\|\nabla_x(\chi_{R_1}\big(\cdot-\frac{R_1}{4}(w+\theta_0)\big)u^\xi(t))\Big\|_{L_{x,y}^2}^2\,dt\lesssim\varepsilon^{1-\sigma}.
\end{align*}
Combining with the refined Gagliardo-Nirenberg inequality,
\begin{align*}
    \big\|u\|_{L_x^{\frac{2d}{d-1}}}^4\lesssim\|u\|_{L_x^2}^2\big\|\nabla_xu^\xi\big\|_{L_x^2}^2,
\end{align*}
one has
\begin{align}
\int_{T-\varepsilon^{-\sigma}}^{T}\sum_{w\in\Z^d}\Big\|\chi_{R_1}\Big(\cdot-\frac{R_1}{4}(w+\theta_0)\Big)u(t)\Big\|_{L_x^\frac{2d}{d-1}L_y^2}^4\,dt\lesssim\varepsilon^{1-\sigma}.\label{L3-est1}
\end{align}
From the Sobolev embedding, H\"older's inequality, we have
\begin{align*}
    &\hspace{3ex}\sum_{w\in\Z^d}\Big\|\chi_{R_1}\Big(\cdot-\frac{R_1}{4}(w+\theta_0)\Big)u(t)\Big\|_{L_x^\frac{2d}{d-1}L_y^2}^2\\
    &\lesssim\sum_{w\in\Z^d}\Big\|\chi_{R_1}\Big(\cdot-\frac{R_1}{4}(w+\theta_0)\Big)u(t)\Big\|_{L_{x,y}^2}\Big\|\chi_{R_1}\Big(\cdot-\frac{R_1}{4}(w+\theta_0)\Big)u(t)\Big\|_{L_{x}^\frac{2d}{d-2}L_y^2}\\
    &\lesssim\Big(\sum_{w\in\Z^d}\Big\|\chi_{R_1}\Big(\cdot-\frac{R_1}{4}(w+\theta_0)\Big)u(t)\Big\|_{L_{x,y}^2}^2\Big)^\frac12\Big(\sum_{w\in\Z^d}\Big\|\chi_{R_1}\Big(\cdot-\frac{R_1}{4}(w+\theta_0)\Big)u(t)\Big\|_{L_{x}^\frac{2d}{d-2}L_y^2}^2\Big)^\frac12\\
    &\lesssim\|u(t)\|_{L_{x,y}^2}\Big(\sum_{w\in\Z^d}\Big\|\chi_{R_1}\Big(\cdot-\frac{R_1}{4}(w+\theta_0)\Big)u(t)\Big\|_{L_{x}^\frac{2d}{d-2}L_y^2}^2\Big)^\frac12.
    \end{align*}
    By the Sobolev inequality and the basic property $|\nabla\chi|\lesssim\eta^{-1}$, for $R_1>\eta^{-1}$, we have
    \begin{align}
&\hspace{3ex}\sum_{w\in\Z^d}\Big\|\chi_{R_1}\Big(\cdot-\frac{R_1}{4}(w+\theta_0)\Big)u(t)\Big\|_{L_{x}^\frac{2d}{d-2}L_y^2}^2\notag\\
&\lesssim\sum_{w\in\Z^d}\Big\|\chi_{R_1}\Big(\cdot-\frac{R_1}{4}(w+\theta_0)\Big)\nabla_xu(t)\Big\|_{L_{x,y}^2}^2+\frac{1}{R_1^2}\Big\|(\nabla\chi)_{R_1}\Big(\cdot-\frac{R_1}{4}(w+\theta_0)\Big)u(t)\Big\|_{L_{x,y}^2}^2\notag\\
&\lesssim\|u(t)\|_{\Sigma }^2+O(\eta^{-2}R_1^{-2})\|u(t)\|_{L_{x,y}^2}^2\lesssim\|u(t)\|_{\Sigma }^2.\label{L6-est}
    \end{align}
    Putting these estimates together, we have
    \begin{align*}
        \sum_{w\in\Z^d}\Big\|\chi_{R_1}\Big(\cdot-\frac{R_1}{4}(w+\theta_0)\Big)u(t)\Big\|_{L_x^\frac{2d}{d-1}L_y^2}^2\lesssim1.
    \end{align*}
    Taking the time integral, we have the following estimate directly,
    \begin{equation}
    \int_{T-\varepsilon^{-\sigma}}^T\sum_{w\in\Z^d}\Big\|\chi_{R_1}\Big(\cdot-\frac{R_1}{4}(w+\theta_0)\Big)u(t)\Big\|_{L_x^\frac{2d}{d-1}L_y^2}^2\lesssim\varepsilon^{-\sigma}.\label{L3-est2}
    \end{equation}
    Interpolating between \eqref{L3-est1} and \eqref{L3-est2} implies 
    \begin{align*}
      &\hspace{6ex}  \|u\|_{L_{t,x}^\frac{2d}{d-1}L_y^2([T-\varepsilon^{-\sigma},T]\times\R^{d+1})}^\frac{2d}{d-1}\\&\lesssim\int_{T-\varepsilon^{-\sigma}}^T\sum_{w\in\Z^d}\Big\|\chi_{R_1}\Big(\cdot-\frac{R_1}{4}(w+\theta_0)\Big)u(t)\Big\|_{L_{x}^\frac{2d}{d-2}L_y^2}^\frac{2d}{d-1}\,dt\\
        &\lesssim\int_{T-\varepsilon^{-\sigma}}^T
\Big(\sum_{w\in\Z^d}
\|\chi_{R_1}\Big(\cdot-\frac{R_1}{4}(w+\theta_0))u(t)\Big\|^4_{L_x^\frac{2d}{d-1}L_y^2}\Big)^\frac{1}{d-1}
\notag\\
&\hspace{2ex}\times\Big(\sum_{w\in\Z^d}
\|\chi_{R_1}\Big(\cdot-\frac{R_1}{4}(w+\theta_0)\Big)u(t)\Big\|_{L_x^\frac{2d}{d-1}L_y^2}^2\Big)^{\frac{d-2}{d-1}}\,dt
\notag\\
&\lesssim
\Big(\int_{T-\varepsilon^{-\sigma}}^{T}\sum_{w\in\Z^d}
\|\chi_{R_1}\Big(\cdot-\frac{R_1}{4}(w+\theta_0)\Big)u(t)\Big\|_{L_x^{\frac{2d}{d-1}}L_y^2}^4\,dt\Big)^{\frac{1}{d-1}}
\notag\\
&\hspace{2ex}\times\Big(\int_{T-\varepsilon^{-\sigma}}^{T}\sum_{w\in\Z^d}
\|\chi_{R_1}\Big(\cdot-\frac{R_1}{4}(w+\theta_0)\Big)u(t)\Big\|_{L_x^{\frac{2d}{d-1}}L_y^2}^2\,dt\Big)^\frac{d-2}{d-1}
\lesssim \varepsilon^{\frac{1}{d-1}-\sigma}.
    \end{align*}
    In conclusion, we have
    \begin{align*}
        \|u\|_{L_{t,x}^{\frac{2d}{d-1}}L_y^2}\lesssim\varepsilon^{\frac{d-1}{2d}\big(\frac{1}{d-1}-\sigma\big)}.
    \end{align*}
    To finish the proof, we need to choose the parameters carefully. Let $\theta\in(0,1)$ to be determined later. Suppose that $(q_2,r_2)\in\R^2$ such that
    \begin{align*}
        \frac{1}{q}=\frac{(d-1)\theta}{2d}+\frac{1-\theta}{q_2},\,\,\frac{1}{r}=\frac{\theta(d-1)}{2d}+\frac{1-\theta}{r_2}.
    \end{align*}
    Taking $r^*\geq2,s>0$ such that 
    \begin{align*}
        \frac{1}{r_2}=\frac{1}{r^*}-\frac{s}{d},\,\,\frac{2}{q_2}+\frac{d}{r^*}=\frac{2}{\al}.
    \end{align*}
    Since $(q,r)$ is $H^{s_c}$-admissible, then we have the following relationship
    \begin{align*}
        \frac{2}{\al}=\theta(d-1)\big(\frac{1}{2}+\frac{1}{d}\big)+(1-\theta)\big(\frac{d}{2}-s\big).
    \end{align*}
    Equivalently, we can write
    \begin{align}
        s=\frac{d}{2}-\frac{1}{1-\theta}\Big(\frac{2}{\al}-\frac{(d-1)(d+2)\theta}{2d}\Big).
    \end{align}
    Moreover, we need that $s\in(s_c,\frac{1}{2})$. By simple calculation, $s>s_c$ unconditionally.  $s<\frac{1}{2}$ is equivalent to 
    \begin{align*}
        \frac{d-1}{2}-\frac{2}{\al(1-\theta)}+\frac{(d-1)(d+2)\theta}{2d(1-\theta)}<0.
    \end{align*}
    Moreover, since $\al<\frac{4}{d-1}$, we have
    \begin{align*}
    \theta<\frac{d}{d-1}\big(\frac{2}{\al}-\frac{2}{d-1}\big).
    \end{align*}
    By H\"older inequality and Sobolev embedding, one has
    \begin{align*}
    \|u\|_{L_t^qL_x^r\mathcal H_y^s([T-\varepsilon^{-\si},T]\times\R^{d+1})}&\lesssim\|u\|_{L_{t,x}^\frac{2d}{d-1}L_y^2}^\theta\|u\|_{L_t^{q_2}L_x^{r_2}\mathcal H_y^{1-s}}^{1-\theta}\\
    &\lesssim\|u\|_{L_{t,x}^\frac{2d}{d-1}L_y^2}^\theta\|u\|_{L_t^{q_2}W_x^{s,r^*}\mathcal H_y^{1-s}}^{1-\theta}\\
    &\lesssim\varepsilon^{\frac{(d-1)\theta}{2d}\big(\frac{1}{d-1}-\sigma\big)}\varepsilon^{-\frac{\sigma}{q_2}(1-\theta)}\lesssim\varepsilon^{\frac{\theta}{2d}-\sigma(\frac{d-1}{2d}\theta+\frac{1-\theta}{q_2})}.
    \end{align*}
    Choosing $\sigma\ll1$ small enough, we can complete the proof of Theorem \ref{thm:main}.
  \end{proof}
  

\begin{lemma}\label{lemma msequaltomgs}
    We have $m_{\omega}=m_{GS}$. Hence, $ P_{\omega}^+$=$ \mathcal{K}_{\omega}^+$.
\end{lemma}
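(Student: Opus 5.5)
We prove the two inequalities $m_{GS}\le m_\omega$ and $m_\omega\le m_{GS}$ separately. Then the sets $P_\omega^+$ and $\mathcal{K}_\omega^+$ have identical defining conditions, so they coincide.

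\emph{Step 1: $m_{GS}\le m_\omega$.} By Theorem \ref{thm groundstate}, the minimizer $\phi_\omega$ of $m_\omega$ is a nontrivial solution of \eqref{equ elliptic} in $\Sigma$. Hence it is admissible in the infimum \eqref{ml1}, and $m_{GS}\le S_\omega(\phi_\omega)=m_\omega$.

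\emph{Step 2: every nontrivial solution $\phi\in\Sigma$ of \eqref{equ elliptic} satisfies $Q(\phi)=0$.} This is a Pohozaev-type identity in the $x$-variables only. We derive it with the truncated semivirial argument sketched in the introduction. The function $u(t)=e^{i\omega t}\phi$ solves \eqref{PHNLS}, so $V_R(t)=\int\tilde\varphi_R(x)|u(t,z)|^2\,dz$ is constant in $t$ and $V_R''\equiv0$. The localized virial computation, the same one that produces \eqref{defi of Q} but with weight $\tilde\varphi_R$, gives
\[
V_R''(t)=4\,\mathrm{Re}\int \partial_{x_j}\partial_{x_k}\tilde\varphi_R\,\partial_{x_j}\bar u\,\partial_{x_k}u\,dz-\int\Delta_x^2\tilde\varphi_R|u|^2\,dz-\frac{2\alpha}{\alpha+2}\int\Delta_x\tilde\varphi_R|u|^{\alpha+2}\,dz .
\]
The weight is $y$-independent, so the terms coming from $\partial_y^2$ and $y^2$ integrate to zero, exactly as \eqref{esty1}--\eqref{esty4} cancel or drop out in Section 5.

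On $|x|\le R$ we have $\partial_{jk}\tilde\varphi_R=2\delta_{jk}$ and $\Delta_x\tilde\varphi_R=2d$. Therefore $V_R''(0)=8Q(\phi)+A_R$, where
\[
|A_R|\lesssim\int_{|x|\ge R}\big(|\nabla_x\phi|^2+R^{-2}|\phi|^2+|\phi|^{\alpha+2}\big)\,dz .
\]
Here we use $|\partial^2\tilde\varphi_R|\lesssim1$ and $|\Delta^2\tilde\varphi_R|\lesssim R^{-2}$. Since $\phi\in\Sigma\subset H^1\cap L^{\alpha+2}$, dominated convergence gives $A_R\to0$ as $R\to\infty$. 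Hence $Q(\phi)=0$. The one point needing care is justifying the virial identity for $e^{i\omega t}\phi$ with merely $\Sigma$ regularity. For this we use the standard regularization argument, or we test \eqref{equ elliptic} directly against $\nabla_x\tilde\varphi_R\cdot\nabla_x\bar\phi$ plus $\tfrac12\Delta_x\tilde\varphi_R\bar\phi$, which avoids time dependence altogether. We expect this justification, together with uniform control of the tail terms, to be the main technical obstacle; it is routine but must be done carefully.

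\emph{Step 3: $m_\omega\le m_{GS}$, and conclusion.} Let $\phi$ be any nontrivial solution of \eqref{equ elliptic} in $\Sigma$. By Step 2, $\phi\in K$, so $m_\omega\le S_\omega(\phi)$. Taking the infimum over all such $\phi$ gives $m_\omega\le m_{GS}$, and with Step 1 we get $m_\omega=m_{GS}$. Since $P_\omega^+$ and $\mathcal{K}_\omega^+$ are defined by the same conditions $\phi\neq0$ and $Q(\phi)\ge0$, with the thresholds $m_\omega$ and $m_{GS}$ respectively, the equality of thresholds gives $P_\omega^+=\mathcal{K}_\omega^+$.
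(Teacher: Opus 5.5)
Your proposal is correct and follows the paper's route. You get $m_{GS}\le m_\omega$ from Theorem~\ref{thm groundstate}, and the reverse inequality from the truncated semivirial identity $V_R''(0)=8Q(\phi)+A_R=0$ with $A_R\to0$. The only difference is that you apply this identity to every nontrivial solution of \eqref{equ elliptic} and then take the infimum, whereas the paper applies it to a single ground state $\phi_{GS}$; this lets you drop the cited fact from Ardila--Carles that $m_{GS}$ is attained.
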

\begin{proof}
    By Theorem \ref{thm groundstate} and the definition of $m_{\omega}$ and $m_{GS}$, we know that
    \[
    m_{GS}\leq m_{\omega}.
    \]
To prove the reserve inequality,  we first recall    that Ardila and Carles \cite{Ardila-Carles2021} proved that $m_{GS}$ can be attained by a non-trivial ground state $\phi_{GS}\in \Sigma $, i.e., $S_{\omega}(\phi_{GS})=m_{GS}$. Therefore,  it suffices to show that
    \[
    Q(\phi_{GS})=0.
    \]
 Recalling the definition of $\phi_{GS}$, we know that $e^{i\omega t}\phi_{GS}$ is a solution of \eqref{PHNLS}. Let $\tilde \varphi$ be a radial smooth solution in $C_0^{\infty}(\R^d)$ such that
    \[
    \tilde \varphi(r)=r^2, \quad r=|x|\leq 1,\quad \tilde \varphi(r)\geq0\quad\text{and}\quad \tilde \varphi(r)''\leq 2,\quad r\geq 0.
    \]
    and let $\tilde \varphi_{R}=R^2\tilde \varphi(\frac{x}{R})$. Then for any $R>0$, we know that the following truncation semi-virial functional
    \[
    V_{R}(t):=\int_{\R^{d+1}}\tilde \varphi_R(x)|e^{i\omega t}\phi_{GS}(z)|^2 dz
    \] 
    is well-defined and is equal to  a constant independent of $t
    \in\R$. Therefore, $V_{R}^{''}(0)\equiv0$ for all $R>0$. On the other hand, by a direct computation, we see that
    \begin{align}
    V_{R}^{''}(0)&=8Q(\phi_{GS})+4\int_{\R^{d+1}}\Big(\frac{\tilde \varphi_R^{\prime\prime}}{r}-2\Big)|\nabla_x\phi_{GS}|^2\,dz+4\int_{\R^{d+1}}\Big(\frac{\tilde \varphi_R^{\prime\prime}}{r^2}-\frac{\tilde \varphi_R^\prime}{r^3}\Big)|x\cdot\nabla_x\phi_{GS}|^2\,dz\notag\\
    &-\frac{2\al}{\al+2}\int_{\R^{d+1}}\big(\tilde \varphi_R^{\prime\prime}+(d-1)\frac{\tilde \varphi_R^\prime}{r}-2d\big)|\phi_{GS}|^{\al+2}\,dz-\int_{\R^{d+1}}\Delta_x^2\tilde \varphi_R |\phi_{GS}|^2\,dz\notag\\    &=8Q(\phi_{GS})+A_R.\label{trunv}
    \end{align}
     It is easy to verify that 
     \begin{align*}
         \tilde \varphi_R(r)\leq r^2,\,\tilde \varphi_R^{\prime\prime}(r)\leq2,\,|\Delta_x^2\tilde \varphi_R(r)|\leq CR^{-2}.
     \end{align*}
    Using the support condition of $\tilde \varphi_R$, we see that 
    \begin{align*}
        \operatorname{supp}(\tilde \varphi_R)=\operatorname{supp}(|\tilde \varphi_R^{\prime\prime}+(d-1)\tilde \varphi_R^\prime/r-2d|)\subset \{R\leq|x|<\infty\},
    \end{align*}
     then we can prove that 
     \begin{align*}
         -\frac{2\al}{\al+2}\int_{\R^{d+1}}(\tilde \varphi_R^{\prime\prime}+(d-1)\tilde \varphi_R^\prime/r-2d)|\phi_{GS}|^{\al+2}\,dz\lesssim\|\phi_{GS}\|_{L^{\al+2}(|x|\geq R)}^{\al+2}.
     \end{align*}
     Also, we can prove that 
     \begin{align*}
         \int_{\R^{d+1}}\Delta_x^2\tilde \varphi_R|\phi_{GS}|^2\,dz\lesssim R ^{-2}\|\phi_{GS}\|_{L^2(|x|\geq R)},
     \end{align*}
     \begin{align*}
4\int_{\R^{d+1}}\Big(\frac{\tilde \varphi^{\prime\prime}}{r}-2\Big)|\nabla_x\phi_{GS}|^2\,dz\lesssim\|\nabla_x\phi_{GS}\|_{L^2(|x|\geq R)}^2,
     \end{align*}
     \begin{align*}   4\int_{\R^{d+1}}\Big(\frac{\tilde \varphi_R^{\prime\prime}}{r^2}-\frac{\tilde \varphi_R^\prime}{r^3}\Big)|x\cdot\nabla_x\phi_{GS}|^2\,dz\lesssim\|\nabla_x\phi_{GS}\|_{L^2(|x|\geq R)}^2.
     \end{align*}      
     Since $\phi_{GS}\in\Sigma $, the right-hand side of the above estimates are well-defined. Then by monotone convergence theorem, we have
    $|A_R|\to0$ as $R\to\infty$, which, in view of \eqref{trunv}, implies $Q(\phi_{GS})=0$.
\end{proof}

\begin{proof}[Proof of Theorem \ref{thm:alldimension}] The proof of the scattering result  is  a combination of  Theorem \ref{thm:main} and Lemma \ref{lemma msequaltomgs}. The proof of the blow-up result is proved via Glassey's convex method \cite{Glassey}   and can be obtained by following  the arguments in  \cite[Section 4]{Ardila-Carles2021}.  
\end{proof}

{\small \noindent \textbf{Acknowledgements:}
T. Liu is supported by the   National Funded Postdoctoral Researcher Program    (GZB20240945) and China Postdoctoral Science Foundation  (2025M784442).
J. Zheng was supported by  National key R\&D program of China: 2021YFA1002500 and  NSFC Grant 12271051.}
				
				\medskip
%
%
%
%

\end{document}